\documentclass[a4paper,11pt]{article}
\usepackage{amsmath,amsthm,amssymb}
\usepackage[mathscr]{eucal}

\setlength{\oddsidemargin}{0pt}
\setlength{\topmargin}{5pt}
\setlength{\textheight}{650pt}
\setlength{\textwidth}{470pt}
\setlength{\headsep}{10pt}
\setlength{\parindent}{0pt}
\setlength{\parskip}{1ex plus 0.5ex minus 0.2ex}

\numberwithin{equation}{section}

{\theoremstyle{definition}\newtheorem{definition}{Definition}[section]

\newtheorem{remark}[definition]{Remark}
}

\newtheorem{proposition}[definition]{Proposition}
\newtheorem{lemma}[definition]{Lemma}
\newtheorem{theorem}[definition]{Theorem}
\newtheorem{corollary}[definition]{Corollary}

\newcommand{\GL}{\operatorname{GL}}
\newcommand{\Inn}{\operatorname{Inn}}

\newcommand{\dom}{\operatorname{dom}}
\newcommand{\bigfree}{\mathop{\text{\huge $\ast$}}}
\newcommand{\M}{\operatorname{M}}
\newcommand{\C}{\mathbb{C}}
\newcommand{\notembed}[1]{\underset{#1}{\not\prec}}
\newcommand{\embed}[1]{\underset{#1}{\prec}}

\newcommand{\F}{\mathbb{F}}
\newcommand{\cR}{\mathcal{R}}
\newcommand{\acts}{\curvearrowright}
\newcommand{\actson}[1][]{\overset{#1}{\curvearrowright}}
\newcommand{\SL}{\operatorname{SL}}
\newcommand{\rL}{\operatorname{L}}
\newcommand{\Aut}{\operatorname{Aut}}
\newcommand{\Out}{\operatorname{Out}}
\newcommand{\N}{\mathbb{N}}

\newcommand{\Z}{\mathbb{Z}}
\newcommand{\cF}{\mathcal{F}}

\newcommand{\si}{\sigma}

\newcommand{\recht}{\rightarrow}
\newcommand{\cU}{\mathcal{U}}
\newcommand{\vphi}{\varphi}

\newcommand{\R}{\mathbb{R}}
\newcommand{\al}{\alpha}
\newcommand{\eps}{\varepsilon}
\newcommand{\module}{\operatorname{mod}}
\newcommand{\Tr}{\operatorname{Tr}}
\newcommand{\ovt}{\overline{\otimes}}
\newcommand{\B}{\operatorname{B}}

\newcommand{\om}{\omega}
\newcommand{\cP}{\mathcal{P}}
\newcommand{\cZ}{\mathcal{Z}}
\newcommand{\Q}{\mathbb{Q}}

\newcommand{\Ker}{\operatorname{Ker}}

\newcommand{\Emb}{\operatorname{Emb}}

\newcommand{\Rp}{\R_+}

\newcommand{\ot}{\otimes}

\newcommand{\invlimit}{\mathop{\underleftarrow{\operatorname{lim}}}}

\newcommand{\Ad}{\operatorname{Ad}}
\newcommand{\Centr}{\operatorname{Centr}}

\newcommand{\be}{\beta}
\newcommand{\rC}{\operatorname{C}}

\newcommand{\bS}{\mathscr{S}}

\newcommand{\Sfactor}{\bS_{\text{\rm factor}}}
\newcommand{\Seqrel}{\bS_{\text{\rm eqrel}}}
\newcommand{\Scentr}{\bS_{\text{\rm centr}}}

\begin{document}
\begin{center}
{\LARGE\bf On the fundamental group of II$_1$ factors \vspace{0.5ex} and
equivalence relations arising from group actions}

\bigskip

{\sc by Sorin Popa\footnote{Partially supported by NSF Grant
DMS-0601082}\footnote{Mathematics Department; University of
    California at Los Angeles, CA 90095-1555 (United States).
    \\ E-mail: popa@math.ucla.edu} and Stefaan Vaes\footnote{Partially
    supported by ERC Starting Grant VNALG-200749 and Research
    Programme G.0231.07 of the Research Foundation --
    Flanders (FWO)}\footnote{Department of Mathematics;
    K.U.Leuven; Celestijnenlaan 200B; B--3001 Leuven (Belgium).
    \\ E-mail: stefaan.vaes@wis.kuleuven.be}}

\vspace{1ex}

{\it Dedicated to Alain Connes at the occasion of his 60th birthday.}
%
%
\end{center}

\begin{abstract}
\noindent Given a countable group $G$, we consider the sets
$\Sfactor(G)$, $\Seqrel(G)$, of subgroups $\cF\subset \R_+$ for which
there exists a free ergodic probability measure preserving action $G
\actson X$ such that the fundamental group of the associated II$_1$
factor $\rL^\infty(X)\rtimes G$, respectively orbit equivalence
relation $\cR(G\actson X)$, equals $\cF$. We prove that if
$G=\Gamma^{*\infty}* \Z$, with $\Gamma\neq 1$, then
$\Sfactor(G)$ and $\Seqrel(G)$ contain $\R_+$ itself, all of its
countable subgroups, as well as uncountable subgroups that can have
any Hausdorff dimension $\alpha \in (0,1)$. We deduce that there exist
II$_1$ factors of the form $M=\rL^\infty(X)\rtimes \F_\infty$ such
that the fundamental group of $M$ is $\R_+$, but $M \ovt
\B(\ell^2(\N))$ admits no continuous trace scaling action of $\R_+$.
We then prove that if $G=\Gamma * \Lambda$, with $\Gamma,
\Lambda$ finitely generated ICC groups, one of which has property
(T), then $\Sfactor(G)=\Seqrel(G)=\{\{1\}\}.$
\end{abstract}

\section{Introduction}

Some of the most intriguing phenomena concerning group measure space
II$_1$ factors $M=\rL^\infty(X)\rtimes G$ and orbit equivalence
relations $\cR=\cR(G\actson X)$, arising from free ergodic probability measure
preserving (p.m.p.) actions $G\actson X$ of countable groups $G$ on
probability spaces $(X,\mu)$, pertain to their {\it fundamental
groups} $\cF(M)$, $\cF(\cR)$ (\cite{MvN2}). Although much progress
has been made in understanding and calculating these invariants,
many natural questions on how the group $G$ may affect the behavior
of $\cF(M)$, $\cF(\cR)$ remain open.

A first indication that certain properties of $G$ can impact the
invariants independently of the way it acts appeared in Connes'
groundbreaking work on the classification and the structure of von
Neumann factors, from the 1970's. Thus, a side effect of the
uniqueness of the amenable II$_1$ factor \cite{connes1} and of the
amenable II$_1$ equivalence relation \cite{CFW}, is that
$\cF(M)=\cF(\cR)=\R_+$ whenever the group $G$ is amenable. On the
other hand, arguments from Connes' rigidity paper \cite{connes} were
used to show that if $G$ is infinite conjugacy class (ICC) and has
the property (T) of Kazhdan, then $\cF(M), \cF(\cR)$ are countable
for any free ergodic p.m.p.\ action of $G$ (\cite{Pcorr},
\cite{gg}).

Then, in the late 1990's, Gaboriau discovered that certain groups
$G$, such as the free groups with finitely many generators,
$\F_n, 2\leq n < \infty,$
give rise to orbit equivalence relations
$\cR=\cR(G\actson X)$ with
$\cF(\cR)=\{1\}$, for {\it any} free
ergodic p.m.p. action $F\actson X$ \cite{G1}. Moreover,
many factors of the form $\rL^\infty(X)\rtimes \F_n$ were shown
to have trivial fundamental group as well (cf.
\cite{P5}, \cite{ozpo}) and it is strongly believed that, in fact,
this holds true for all $\F_n \actson X$.

In turn, a completely new type of phenomena
emerged in the case $G=\F_\infty$, where it was shown
that  there exist free
ergodic p.m.p.\ actions $\F_\infty \actson X$ with the fundamental
group of the associated II$_1$ factors and equivalence relations,
$\cF(M), \cF(\cR)$, ranging over a ``large'' family of subgroups
$\cF\subset \R_+$, containing $ \R_+$ itself, all its countable
subgroups, as well as ``many'' uncountable subgroups $\neq \R_+$ \cite{PV1}.
In fact, it was conjectured in \cite{PV1} that any group $\cF$ that can be realized as a
fundamental group of a II$_1$ factor or equivalence relation can also
be realised as $\cF(\rL^\infty(X)\rtimes \F_\infty)$,
$\cF(\cR(\F_\infty \actson X))$, for
some free ergodic p.m.p. action $\F_\infty \actson X$.

Related to all these phenomena, we introduced in \cite{PV1} the sets
$\Sfactor(G)$, $\Seqrel(G)$, of subgroups $\cF \subset \Rp$ for
which there exists a free ergodic m.p.\ action $G \actson X$ such
that $\cF(\rL^\infty(X)\rtimes G)=\cF$, respectively
$\cF(\cR(G\actson X))=\cF$. Using this notation, the result in
\cite{PV1} shows, more precisely, that $ \Sfactor(\F_\infty)\cap
\Seqrel(\F_\infty)$ contains the set $\Scentr$ of all subgroups
$\cF\subset \Rp$ for which there exists a free ergodic action of an
amenable group $\Lambda$ on an infinite measure space $(Y,\nu)$,
such that the set of scalars $t>0$ that can appear as scaling
constants of non-singular automorphisms $\theta$ of $(Y,\nu)$
commuting with $\Lambda \actson Y$ equals $\cF$. In turn, $\Scentr$
is shown to contain $\R_+$, all its countable subgroups and
uncountable subgroups $\cF\subset \R_+$ with arbitrary Hausdorff dimension in the interval
$(0,1)$ (\cite{PV1}). While an abstract characterization of
$\Sfactor(\F_\infty)$, $\Seqrel(\F_\infty)$ remains elusive, it was
noticed in \cite{PV1} that subgroups in either set, as in fact
subgroups in $\Sfactor(G)$, $\Seqrel(G)$ for any $G$, must be Borel
sets and Polishable.

Our purpose in this paper is to estimate (or even completely
calculate) the invariants $\Sfactor(G)$, $\Seqrel(G)$ for other
classes of groups $G$. We target two types of results: on the
one hand, detecting classes of groups $G$ for which $\Sfactor(G)$,
$\Seqrel(G)$ are ``large'', containing for instance the set
$\Scentr$ defined above (like in the case case $G=\F_\infty$); on
the other hand, detecting classes of groups $G$ for which
$\Sfactor(G)$, $\Seqrel(G)$ contain only ``small'' subgroups of
$\Rp$ (e.g. countable, or just $\{1\}$).

Thus, our first result enlarges considerably the class of groups $G$
for which we can show that the set $\Scentr$ is contained in both
$\Sfactor(G)$ and $\Seqrel(G)$.

\begin{theorem} \label{thm0.1.Sfactorlarge}
Let $\Gamma$ be a non-trivial group, $\Sigma$ an infinite
amenable group and denote $G=\Gamma^{*\infty} * \Sigma$. Then,
$$\Scentr \subset \Sfactor(G)
\qquad\text{and}\qquad \Scentr \subset \Seqrel(G) \; .$$

Moreover, there exist free ergodic p.m.p.\ actions $G \actson
(X,\mu)$ such that the II$_1$ factor $M = \rL^\infty(X) \rtimes G$
has fundamental group $\cF(M)= \Rp$, but the II$_\infty$ factor $M
\ovt \B(\ell^2(\N))$ admits no trace scaling action of $\Rp$.
\end{theorem}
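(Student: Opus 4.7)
My plan is to proceed in two stages: first establish the inclusion $\Scentr \subseteq \Sfactor(G) \cap \Seqrel(G)$ by an explicit construction, then perturb that construction to produce the counterexample factor $M$ with $\cF(M) = \R_+$ but no continuous trace scaling action of $\R_+$ on $M \ovt \B(\ell^2(\N))$.

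For the first stage, fix $\cF \in \Scentr$ and let $\Lambda \acts (Y,\nu)$ be an amenable free ergodic action on an infinite measure space together with commuting non-singular scaling automorphisms $\{\theta_s\}$ whose set of scaling factors equals $\cF$. I would construct the desired free ergodic p.m.p.\ action $G \acts (X,\mu)$ by adapting the scheme of \cite{PV1}: the infinite amenable factor $\Sigma$ is used, via uniqueness of amenable orbit equivalence, to realize a copy of the $\Lambda$-action on an appropriate $\cF$-fundamental domain of $Y$, while the infinitely many free copies of $\Gamma$ in $\Gamma^{*\infty}$ supply enough independent free factors on which to spread out the $\theta_s$ so that each is implemented by an outer automorphism of $M = \rL^\infty(X) \rtimes G$ scaling the trace by $s$. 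This builds $\cF \subseteq \cF(M)$ into the construction. The reverse inclusion is the core technical step: Popa's intertwining-by-bimodules applied to the free-product decomposition of $G$, combined with a uniqueness result for the Cartan subalgebra $\rL^\infty(X) \subset M$, forces every trace-scaling automorphism of $M \ovt \B(\ell^2(\N))$ to arise, modulo inner automorphisms, from one of the prescribed $\theta_s$, hence to have scaling factor in $\cF$. Running the parallel argument at the level of the equivalence relation $\cR(G \acts X)$ handles $\Seqrel(G)$.

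For the second statement, recall that $\cF(M)$ carries a canonical Polish group structure inherited from $\Out(M \ovt \B(\ell^2(\N)))$, and that by Sutherland--Takesaki theory a continuous trace scaling $\R_+$-action on $M \ovt \B(\ell^2(\N))$ exists precisely when the module map $\Out(M \ovt \B(\ell^2(\N))) \to \R_+$ admits a continuous section whose image lies in $\cF(M)$. To arrange $\cF(M) = \R_+$ as a set but no such section, I would choose a Borel group automorphism $\phi \colon \R_+ \to \R_+$ that is bijective but discontinuous --- these exist in abundance via non-continuous $\Q$-linear automorphisms of $\R$ transported by the exponential --- and execute the first-stage construction with the scaling family $\{\theta_s\}_{s \in \R_+}$ replaced by $\{\theta_{\phi(s)}\}_{s \in \R_+}$. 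The reconstruction argument then identifies $\cF(M)$, as a Polish group, with $\R_+$ equipped with a strictly finer topology than the standard one, so any continuous section $\R_+ \to \cF(M)$ would give, via the closed graph theorem, a homeomorphism between these two topologies, which is impossible.

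The main obstacle is the reconstruction step: establishing $\cF(M) \subseteq \cF$ with enough control to pin down not only the group $\cF(M)$ but also its canonical Polish topology in terms of the input data $(\Lambda \acts Y, \{\theta_s\})$. This sharp description of $\Out(M \ovt \B(\ell^2(\N)))$ is what rules out ``extra'' continuous sections coming from automorphisms of $M$ unaccounted for by the construction, and is the bridge between the abstract group equality $\cF(M) = \R_+$ and the finer topological statement about trace scaling actions.
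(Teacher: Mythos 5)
Your first stage is essentially the paper's route: build the action as a free product of a rigid $\Gamma^{*\infty}$-part with an amenable part that carries an infinite-measure $\Lambda$-action $\Lambda \actson (Y,\eta)$, so that $\cF \subseteq \cF(M)$ by construction, and then use intertwining-by-bimodules plus uniqueness of the Cartan subalgebra to show that every trace-scaling automorphism of $M \ovt \B(\ell^2(\N))$ comes, modulo $\Inn$ and the cocycle automorphisms, from $\Centr_{\Aut Y}(\Lambda)$. (The paper implements this via an auxiliary free product $\Gamma^{*\infty} * \Lambda \actson X \times Y$ and then re-generates the restricted relation by $\Gamma^{*\infty} * \Sigma$ using Connes--Feldman--Weiss; your description is vaguer but compatible.)

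The second stage, however, has a genuine gap. You propose to take a bijective but discontinuous \emph{Borel} group automorphism $\phi$ of $\R_+$ and ``replace $\{\theta_s\}$ by $\{\theta_{\phi(s)}\}$.'' Two problems. First, such a $\phi$ does not exist: any Baire/Lebesgue measurable additive map $\R \recht \R$ is automatically linear, hence continuous, so the non-continuous $\Q$-linear automorphisms you invoke are necessarily non-measurable and cannot be used in any Borel construction. Second, and more fundamentally, the reindexing does nothing: the scaling factor of an automorphism $\theta \in \Centr_{\Aut Y}(\Lambda)$ is the intrinsic quantity $\module(\theta)$, and the object controlling $\cF(M)$ is the fixed Polish group $\Centr_{\Aut Y}(\Lambda)$ together with the fixed continuous homomorphism $\module$; relabelling the family $\{\theta_s\}$ does not alter either, so you cannot ``install'' a finer topology on $\R_+$ this way. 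What is actually needed is an action $\Lambda \actson (Y,\eta)$ for which $\module : \Centr_{\Aut Y}(\Lambda) \recht \R_+$ is onto but admits no continuous splitting. The paper achieves this by taking $\Lambda = \Lambda_1 \times \Lambda_2$ acting on $Y_1 \times Y_2$, where the two ergodic measures $\nu_1,\nu_2$ of Aaronson type satisfy $H_{\nu_1} \neq \R \neq H_{\nu_2}$ but $H_{\nu_1} + H_{\nu_2} = \R$ (Lemma \ref{lemma.two-ergodic}); a continuous splitting of the addition map $H_{\nu_1} \times H_{\nu_2} \recht \R$ would force continuous homomorphisms $\theta_i : \R \recht H_{\nu_i}$ with $\theta_1 + \theta_2 = \id$, and since each $H_{\nu_i} \hookrightarrow \R$ is continuous and proper, each $\theta_i$ is multiplication by $0$ --- a contradiction. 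This explicit source of ``onto but non-split'' module maps is the missing ingredient in your argument.
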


In Section \ref{sec.countable}, we will show that if
the full group of an equivalence relation $\cR$ on
a probability space $(X,\mu)$ contains a property (T) group acting
ergodically on $X$, then $\cF(\cR)$ is countable. Thus, if a
group $\Gamma$ appearing in Theorem \ref{thm0.1.Sfactorlarge} contains an infinite
subgroup $\Lambda$ with the property (T) and if $G=\Gamma^{*\infty}
* \Sigma \curvearrowright X$ is a free ergodic p.m.p.\ action such
that $\cR_G$ has fundamental group equal to an uncountable group in
$\Scentr$, then the restriction of $G \curvearrowright X$ to
$\Lambda$ cannot be ergodic.

Note that the last part of Theorem \ref{thm0.1.Sfactorlarge} provides group
measure space II$_1$ factors $M=\rL^\infty(X)\rtimes G$ which do
have fundamental group equal to $\R_+$ yet cannot appear in the
continuous decomposition of a type III$_1$ factor. The problem of
whether such II$_1$ factors exist was posed over the years by
several people, including Connes, Takesaki, and more recently
Shlyakhtenko. The fact that there are even factors of the form
$\rL^\infty(X)\rtimes \F_\infty$ satisfying this property (by simply
taking $\Gamma=\Sigma=\Z$ in 1.1) should be contrasted with the
fact that the II$_\infty$ factor associated with $L(\F_\infty)$ does
admit a trace scaling action of $\R_+$, by \cite{rad}.

Note that all groups of the form $G=\Gamma^{*\infty}*\Sigma$,
covered by the above theorem, have infinite first $\ell^2$-Betti
number, $\beta^{(2)}_1(G)=\infty$, and in fact
$\beta^{(2)}_n(G)=\infty, 0$, $\forall n\geq 2$. On the other hand,
by Gaboriau's scaling formula for $\ell^2$-Betti numbers \cite{G1},
any free ergodic p.m.p.\ action of a group $G$ with
$\beta^{(2)}_n(G)\neq 0, \infty$, for some $n$, gives rise to an
equivalence relation $\cR_G$ with trivial fundamental group,
$\cF(\cR_G)=\{1\}$. In other words, $\Seqrel(G) = \{\{1\}\}$. While
it is still an open question whether the corresponding II$_1$
factors $M=\rL^\infty(X)\rtimes G$ satisfy $\cF(M)=\{1\}$ as well
(i.e. $\Sfactor(G) = \{\{1\}\}$), our next result provides a large
class of groups $G$ for which this is indeed the case.

\begin{theorem} \label{thm0.2.Sfactortrivial}
Let $\Gamma$ and $\Lambda$ be infinite, finitely generated groups.
Assume that $\Gamma$ is ICC and that one of the following conditions
holds.
\begin{enumerate} \renewcommand{\theenumi}{\alph{enumi}}
\renewcommand{\labelenumi}{\theenumi)}
\item $\Gamma = \Gamma_1 \times \Gamma_2$,
with $\Gamma_1$ non-trivial and $\Gamma_2$ non-amenable,
\item $\Gamma$ admits a non-virtually abelian,
normal subgroup $\Gamma_1$ with the relative property (T).
\end{enumerate}
Then, $\Sfactor(\Gamma * \Lambda) = \Seqrel(\Gamma*\Lambda) =
\{\{1\}\}$.
\end{theorem}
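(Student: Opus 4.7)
My plan is to handle the equivalence relation assertion and the factor assertion by rather different routes, reducing the factor case to the equivalence relation case at the end.

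For $\Seqrel(\Gamma * \Lambda) = \{\{1\}\}$ I would invoke Gaboriau's scaling formula for $\ell^2$-Betti numbers. Since $\Gamma,\Lambda$ are both finitely generated and infinite, we have $\beta^{(2)}_1(\Gamma), \beta^{(2)}_1(\Lambda) < \infty$, and the free product formula gives $\beta^{(2)}_1(\Gamma * \Lambda) = \beta^{(2)}_1(\Gamma) + \beta^{(2)}_1(\Lambda) + 1$, hence a finite, strictly positive number. For any free ergodic p.m.p.\ action $G = \Gamma * \Lambda \acts X$, Gaboriau's formula $\beta^{(2)}_1(\cR^t) = t^{-1} \beta^{(2)}_1(\cR)$ then forces $\cF(\cR_G) = \{1\}$.

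For the factor case, let $G \acts (X,\mu)$ be a free ergodic p.m.p.\ action, $A = \rL^\infty(X)$, $M = A \rtimes G$, and suppose $t \in \cF(M)$. Writing $N_\Gamma = A \rtimes \Gamma$ and $N_\Lambda = A \rtimes \Lambda$, we have a Cartan amalgamated free product decomposition $M = N_\Gamma *_A N_\Lambda$, which carries a malleable deformation $\theta_s \in \Aut(\widetilde{M})$ in the style of Ioana-Peterson-Popa. Fix a $*$-isomorphism $\alpha : M \recht M^t$. The plan is to show that, after composing with an inner automorphism, $\alpha$ carries the Cartan subalgebra $A^t \subset M^t$ onto $A \subset M$; this will produce a scaling automorphism of $\cR_G$ with scaling parameter $t$, and the already-established $\cF(\cR_G)=\{1\}$ will force $t=1$.

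The core step is to use the hypothesis on $\Gamma$ to install an internal rigidity in $M$ that the deformation $\theta_s$ cannot eat. In case (b), the normal subgroup $\Gamma_1 \lhd \Gamma$ with relative property (T) gives that $\rL\Gamma_1 \subset M$ has the relative property (T) in Popa's sense, so $\theta_s$ converges uniformly on the unit ball of $\alpha(\rL\Gamma_1) \subset M^t$; in case (a), the non-amenability of the commuting factor $\Gamma_2$ furnishes the analogous spectral gap rigidity for $\rL\Gamma_1 \subset \rL\Gamma$ inside $M$. By Popa's intertwining-by-bimodules criterion one then gets $\alpha(\rL\Gamma_1) \embed{M^t} N_\Gamma^t$ (or the side $N_\Lambda^t$; by symmetry and the free-product structure, only the $N_\Gamma^t$ side can absorb a non-amenable diffuse subalgebra that is normalised by an ICC group). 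Using the non-virtually-abelianness of $\Gamma_1$ together with the Popa--Vaes / Chifan--Ioana analysis of quasi-normalisers inside amalgamated free products over Cartan subalgebras, one upgrades this embedding to $\alpha(\rL\Gamma) \embed{M^t} N_\Gamma^t$, and then, by taking normalisers once more, to a unitary conjugation sending $\alpha(N_\Gamma)$ into $N_\Gamma^t$. A second run of the same argument (now using that $\Lambda$ lives freely against $\Gamma$) forces $\alpha(N_\Lambda)$ to be unitarily conjugated into $N_\Lambda^t$; the only possible common intersection is the Cartan subalgebra, which yields the desired $\alpha(A^t) = u A u^*$.

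The hardest step, in my view, will be the passage from the embedding of the rigid piece $\alpha(\rL\Gamma_1)$ (or $\alpha(A\rtimes \Gamma_1)$) into one side of the amalgamated free product, to the full conclusion that $\alpha(A\rtimes \Gamma)$ sits essentially inside that side. This is precisely where one needs finely tuned control over the quasi-normaliser of a rigid subalgebra inside an amalgamated free product with Cartan base: the fact that $\Gamma_1$ is normal and non-virtually-abelian in $\Gamma$ (respectively commutes with the non-amenable $\Gamma_2$) is the feature that lets one rule out the alternative, ``mixing'', behaviour of $\theta_s$ on the larger algebra. Once this is in place, the conclusion $t=1$ is immediate from the equivalence-relation result.
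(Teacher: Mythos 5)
Your treatment of $\Seqrel(\Gamma*\Lambda)$ via Gaboriau's $\ell^2$-Betti number scaling formula is exactly the paper's argument, and the first half of your factor argument --- the decomposition $M=(A\rtimes\Gamma)*_A(A\rtimes\Lambda)$, relative property (T) in case (b) versus the Chifan--Houdayer analysis of the non-amenable direct factor in case (a), intertwining the rigid piece $\theta(\rL(\Gamma_1))$ into one leg $M_i$, and upgrading to $\theta(\rL(\Gamma))\embed{M}M_i$ by normalizer control using that $\Gamma_1$ is not virtually abelian --- also matches the paper. The gap is in your endgame. Your proposed ``second run of the same argument'' to conjugate $\alpha(N_\Lambda)$ into $N_\Lambda^t$ cannot work: $\Lambda$ is an arbitrary infinite finitely generated group, with no property (T), no product structure and no rigidity of any kind, so there is nothing to power a deformation/rigidity argument on the $\Lambda$-side and no reason for $\alpha(N_\Lambda)$ to land in $N_\Lambda^t$. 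Likewise, your claim that ``only the $N_\Gamma^t$ side can absorb'' the rigid piece is unsubstantiated, and also unnecessary: the paper remains agnostic about which $i$ occurs.

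The paper closes the argument differently, and you are missing its crucial use of the ICC hypothesis on $\Gamma$. Once one has a partial isometry $v$ with $v^*v=q$ and a homomorphism $\rho:\rL(\Gamma)\recht qM_i^nq$ satisfying $\theta(a)v=v\rho(a)$, the ICC property gives $M\cap\rL(\Gamma)'=A^\Gamma$, hence $vv^*\in\theta(A^\Gamma)\subset\theta(A)$, so that $v^*\theta(A)v$ is a Cartan subalgebra of $qM^nq$ normalized by the unitaries $\rho(u_g)\in qM_i^nq$. At that point \cite[Theorem 1.8]{IPP} --- a Cartan-conjugacy theorem for amalgamated free products over a common Cartan subalgebra --- produces a unitary conjugating $\theta(A)$ onto $pA$, regardless of which side $M_i$ one landed in and without ever locating $\theta(N_\Lambda)$. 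You should replace your ``intersect the two legs'' step by this one-sided argument; as written, the passage from $\theta(\rL(\Gamma))\embed{M}M_i$ to unitary conjugacy of the Cartan subalgebras is not justified in your proposal.
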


When viewed from the perspective of Connes' discrete decomposition
of type III$_\lambda$ factors with $0<\lambda <1$ (\cite{connes2})
and respectively Connes-Takesaki continuous decomposition of type
III$_1$ factors (\cite{CT}), the above result provides a large class
of groups $G$ with the property that no II$_1$ factor $M$ arising
from an arbitrary free ergodic p.m.p.\ action of $G$ can appear in
the decomposition of a type III factor (i.e., as Connes puts it, no
such $M$ can appear as the ``shadow'' of a type III factor).

While II$_1$ factors $M=\rL^\infty(X)\rtimes \Gamma$ arising from
free ergodic p.m.p.\ actions $\Gamma \curvearrowright X$ of ICC
property (T) groups always have countable fundamental group (cf.
\cite{connes}, \cite{Pcorr}, \cite{gg}), it was not known whether
there exist cases when $\cF(M)\neq \{1\}$. Our next result gives the
first such examples. It also provides the first ``concrete''
examples of free ergodic p.m.p.\ actions $\Gamma \curvearrowright X$
with the associated II$_1$ factors $M$ having fundamental group
$\neq \{1\},
\R_+$. Indeed, the actions in Theorem 1.1 above and in
\cite{PV1} are shown to exist by using a Baire-category argument, at some point,
while in 1.3 below they are specific $G$-actions, obtained as
diagonal products of Bernoulli and profinite actions.

\begin{theorem} \label{thm0.5.rational}
Let $\cF \subset \Q_+$ be a subgroup generated by a subset of the
prime numbers. Let $G = \Z^n \rtimes \SL(n,\Z)$ with $n \geq 3$.
Then $G$ admits a free ergodic p.m.p.\ action $G \actson (X,\mu)$
such that the fundamental group of $\rL^\infty(X) \rtimes G$ and of
$\cR(G \actson X)$ equals $\cF$.
\end{theorem}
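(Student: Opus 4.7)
My plan is to build the action as a diagonal product $G \actson X = Y \times Z$, in which the Bernoulli factor $Z = (K,\kappa)^G$ provides the deformation/rigidity input and the compact profinite factor $Y$ is engineered to realize the prescribed fundamental group. Writing $\cF = \langle P \rangle$ for a set $P$ of primes, I take, for each $p \in P$, a free ergodic p.m.p.\ profinite $G$-action $G \actson (Y_p,\nu_p)$ coming from the congruence quotients of $G$ (using strong approximation $\SL(n,\Z) \twoheadrightarrow \SL(n,\Z/p^k\Z)$ for $n \geq 2$), and set $(Y,\nu) = \prod_{p \in P}(Y_p,\nu_p)$ with the diagonal $G$-action. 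Freeness and ergodicity of $G \actson X$ are then automatic from the weak mixing of the Bernoulli shift on $Z$.

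For the inclusion $\cF \subseteq \cF(\cR(G \actson X)) \cap \cF(M)$, I would exhibit, for each $p \in P$, an explicit trace-scaling automorphism of both the orbit equivalence relation and of $M$ with scaling constant $p$, constructed from the self-similar inverse-limit structure of the $Y_p$-factor: a clopen piece of appropriate measure whose restricted equivalence relation is identified with the full profinite one. Composing such compressions across different primes realizes every element of $\cF$.

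The heart of the argument is the reverse inclusion $\cF(\cR(G \actson X)) \subseteq \cF$. Since $G = \Z^n \rtimes \SL(n,\Z)$ has Kazhdan's property (T) for $n \geq 3$, Popa's cocycle superrigidity theorem applies to the Bernoulli action $G \actson Z$, and more generally to the diagonal action $G \actson Y \times Z$ when cocycles are considered modulo the compact factor $Y$. Given $t \in \cF(\cR)$, I would represent $t$ by a stable orbit-equivalence self-map $\theta$, extract its Zimmer cocycle, and apply cocycle superrigidity to show that it is cohomologous to a cocycle depending only on the $Y$-coordinate. This factors the scaling through the profinite part and reduces the problem to computing $\cF(\cR(G \actson Y))$, which a direct inverse-limit analysis of the profinite relation identifies with $\cF$.

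To upgrade $\cF(\cR) = \cF$ to $\cF(M) = \cF$, I would invoke a Cartan-uniqueness result (Popa--Vaes-style rigidity for factors arising from actions with a Bernoulli component, exploiting the property (T) of $G$) to conclude that $\rL^\infty(X)$ is the unique Cartan subalgebra of $M$ up to unitary conjugacy, whence $\cF(M) = \cF(\cR)$. The main obstacle I anticipate is the cocycle-to-compression step: ensuring the scaling produced by the superrigidity cocycle actually lies in the explicit compression subgroup $\cF$ coming from the congruence structure of $Y$, rather than in some a priori larger Polish subgroup of $\Rp$.
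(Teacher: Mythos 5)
Your overall architecture coincides with the paper's: a diagonal product of a profinite action and a Bernoulli action, virtual cocycle superrigidity obtained by combining Popa's theorem for the Bernoulli (malleable) component with Ioana's theorem for the profinite quotient, and a Cartan-preservation argument (the paper's Proposition \ref{prop.Cartanpreserving}) to pass from $\cF(\cR)$ to $\cF(M)$. But the step you yourself flag as ``the main obstacle'' is exactly the mathematical content of the theorem, and your proposal does not contain it. Two points are missing. First, what superrigidity gives you is \emph{not} a reduction to ``computing $\cF(\cR(G\actson Y))$'': via Proposition \ref{prop.oe} and the mixing of the Bernoulli factor, every self stable orbit equivalence is shown to be similar to a \emph{virtual conjugacy} of the group action, so the fundamental group is identified with the purely group-theoretic set \eqref{eq.candidate} of ratios $[\Gamma:\Lambda_1]/[\Gamma:\Lambda_2]$ over finite index subgroups containing some $\Gamma_k$ whose induced profinite actions $\Lambda_i\actson\invlimit \Lambda_i/\Gamma_n$ are conjugate (one must also dispose of a possible finite normal subgroup $H$, which contributes a factor $|H|$ to the compression constant). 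That set still has to be computed, and nothing about it is a ``direct inverse-limit analysis'' of an equivalence relation.

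Second, the computation itself hinges on a specific, non-obvious choice of the chain $(\Gamma_k)$, not on the congruence towers you propose. The paper sets $R$ equal to the subring of $\Q$ generated by $\cP^{-1}$, puts $G=R^n\rtimes\GL(n,R)$, enumerates $G=\{g_1,g_2,\dots\}$ and takes $\Gamma_k=\Gamma\cap\bigcap_{i=1}^k g_i\Gamma g_i^{-1}$. With this choice the lower bound is essentially automatic: each $g=(x,A)\in G$ conjugates the chain into a cofinal chain, so $\Gamma\cap g\Gamma g^{-1}$ and $g^{-1}\Gamma g\cap\Gamma$ carry conjugate induced profinite actions and contribute the ratio $|\det A|$, yielding $R^*_+\subset\cF(\cR)$. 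For the upper bound, given isomorphic finite index subgroups $\Lambda_1,\Lambda_2\supset\Gamma_k$ one shows $\delta(\Lambda_1\cap\Z^n)=\Lambda_2\cap\Z^n$, invokes Ioana's Lemma 5.2 to get $[\SL(n,\Z):\pi(\Lambda_1)]=[\SL(n,\Z):\pi(\Lambda_2)]$, and then exhibits $A\in\GL(n,\Q)$ implementing $\delta$ on $\Z^n$; the containment $\Z^n\cap\Gamma_k\subset\Z^n\cap A\Z^n\cap A^{-1}\Z^n$ forces $A\in\GL(n,R)$ and hence the ratio $|\det A|^{-1}\in R^*_+$. With your product-of-$p$-congruence-towers choice of $Y$ you would still need to verify both directions (the lower bound via cofinality of $g^{-1}\Gamma_k g\cap\Gamma$ in the chain, and the same upper bound argument), so the choice is probably workable, but as written the decisive identification of the compression group with $\cF$ — in either direction — is absent.
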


We in fact believe that any subgroup of $\Q_+$ can be realized as
the fundamental group of a factor or equivalence relation arising
from a free ergodic p.m.p.\ action of $\Z^n \rtimes \SL(n,\Z)$, $n
\geq 3$. The question of whether there exist free ergodic p.m.p.\
actions of an ICC property (T) group $G \curvearrowright X$ such that
$\cF(\cR_G)$ or $\cF(\rL^\infty(X)\rtimes G)$
contains irrational numbers remains open. In fact, it is not
even known whether
the union of all the fundamental groups
of II$_1$ factors and equivalence relations arising
from free ergodic p.m.p.\ actions of a fixed ICC property (T)
group $G$ is necessarily countable or not.

Finally, noticing that for a large number of groups $G$ it is known
that $\{1\}\in \Sfactor(G)$ (see e.g. \cite{P2}, \cite{P5},
\cite{Picm}), we conjecture that this is in fact the case for all
non-amenable groups $G$. If true, this would also show that the only
possibilities for $\Sfactor(G), \Seqrel(G)$ to be single point sets
are $\Sfactor(G)= \Seqrel(G)=\{\R_+\}$, $\Sfactor(G)=
\Seqrel(G)=\{\{1\}\}$, the first situation corresponding to $G$
being amenable. This would provide a new, interesting facet of the
dichotomy amenable/non-amenable for groups.

\section{Preliminaries}

The \emph{fundamental group} $\cF(M)$ of a II$_1$ factor $M$,
introduced in \cite{MvN2}, is defined as the following subgroup of $\R_+$.
$$\cF(M) = \{ \tau(p)/\tau(q) \mid p,q \;\;\text{are non-zero
projections in $M$ such that}\;\; pMp \cong qMq \} \; .$$
We call II$_1$ equivalence relation on a standard probability
space $(X,\mu)$ any ergodic probability measure preserving (p.m.p.)
measurable equivalence relation with countable equivalence classes.
The fundamental group $\cF(\cR)$ of a II$_1$ equivalence relation $\cR$
is defined as
$$\cF(\cR) = \{ \mu(Y)/\mu(Z) \mid \cR|_Y \cong \cR|_Z \} \; .$$
Whenever $\Gamma \actson (X,\mu)$ is a free ergodic p.m.p.\ action,
we denote by $\cR(\Gamma \actson X)$ the associated II$_1$ orbit
equivalence (OE) relation and by $\rL^\infty(X) \rtimes \Gamma$ the
associated group measure space II$_1$ factor \cite{MvN2}.

\begin{definition} \label{def.rigid}
A free ergodic p.m.p.\ action $\Gamma \actson (X,\mu)$ is called
\emph{rigid} if the corresponding inclusion $\rL^\infty(X) \subset
\rL^\infty(X) \rtimes \Gamma$ is rigid in the sense of \cite[Proposition 4.1]{P5}.
\end{definition}

\subsection*{Some sets of subgroups of $\R$ and ergodic measures}

Given a countable group $\Gamma$, we are interested in
\begin{align*}
\Sfactor(\Gamma) & := \{ \cF \subset \Rp \mid \;  \text{there exists
a free ergodic p.m.p.\ action $\Gamma \actson (X,\mu)$}\\ &
\hspace{2.4cm}\text{such that}\; \cF(\rL^\infty(X) \rtimes \Gamma) = \cF \} \; , \\
\Seqrel(\Gamma) & := \{ \cF \subset \Rp \mid \;  \text{there exists a
free ergodic p.m.p.\ action $\Gamma \actson (X,\mu)$}\\ &
\hspace{2.4cm}\text{such that}\; \cF(\cR(\Gamma \actson X)) =
\cF \} \; .
\end{align*}

In \cite[Theorem 5.3 and formula (2.2)]{PV1}, we have shown that
both $\Sfactor(\F_\infty)$ and $\Seqrel(\F_\infty)$ contain
$\Scentr$, defined as
\begin{align*}
\Scentr & := \{ \cF \subset \Rp \mid \; \text{there exists $\; \Lambda
\actson (Y,\eta) \;$, a free ergodic m.p.\ action,} \\ & \hspace{2.4cm}\text{with
$\Lambda$ amenable and $\; \module(\Centr_\Lambda(Y)) = \cF$} \; \} \; .
\end{align*}

Following \cite[Section 4]{A}, we call \emph{ergodic measure on $\R$}
any $\sigma$-finite measure $\nu$ on the Borel sets of $\R$ having
the following properties, where we denote $\lambda_x(y) = x+y$.
\begin{itemize}
\item For all $x \in \R$, either $\nu \circ \lambda_x = \nu$ or
$\nu \circ \lambda_x \perp \nu$.
\item There exists a countable subgroup $Q \subset \R$ such that
$\nu \circ \lambda_x = \nu$ for all $x \in Q$
and such that every $Q$-invariant Borel function on $\R$ is
$\nu$-almost everywhere constant.
\end{itemize}
For every ergodic measure $\nu$ on $\R$, one defines
$$H_\nu := \{x \in \R \mid \nu \circ \lambda_x = \nu \} \; .$$
As shown in \cite{A},
the groups $H_\nu$ can have arbitrary Hausdorff dimension and all $\exp(H_\nu)$
belong to $\Scentr$. We refer to \cite[Section 2 and the proof of Theorem 5.3]{PV1}
for a detailed exposition.

\subsection*{Intertwining by bimodules and the notation $A \underset{M}{\prec} B$ } 			

In Sections \ref{sec.Sfactorlarge} and \ref{sec.Sfactortrivial},
we use the method of intertwining by bimodules, introduced by the
first author in \cite{P5}. Let $(M,\tau)$ be a von Neumann algebra
with faithful normal tracial state $\tau$. We use the notation
$M^n = \M_n(\C) \ot M$.
When $A,B \subset M^n$ are possibly non-unital embeddings,
we write $A \embed{M} B$ if there exists a non-zero partial
isometry $v \in 1_A (\M_{n,m}(\C) \ot M)$ and a, possibly
non-unital, normal $^*$-homomorphism $\rho : A \recht B^m$
satisfying $a v = v \rho(a)$ for all $a \in A$. Several
equivalent formulations of this property can be given;
see \cite[Theorem 2.1]{P1} (see also \cite[Theorem C.3]{V1}).

Suppose that $A$ and $B$ are Cartan subalgebras of the II$_1$
factor $M$. Let $A_0 \subset A$ be a von Neumann subalgebra such
that $A_0' \cap M = A$. By \cite[Theorem A.1]{P5}, $A_0 \embed{M} B$
if and only if there exists a unitary $u \in M$ such that $uAu^* = B$.

\section{Groups $G$ for which $\Sfactor(G)$ contains uncountable groups}
\label{sec.Sfactorlarge}

The following theorem, whose proof is given at the end of the
section, provides a large family of groups $G$ such that
$\Sfactor(G)$ and $\Seqrel(G)$ is large, in the sense that both
contain $\Scentr$. Moreover, we prove that $G$ admits free ergodic p.m.p.\ actions $G \actson (X,\mu)$ such that the II$_1$ factor $M := \rL^\infty(X) \rtimes G$ has fundamental group $\R_+$, but nevertheless, the II$_\infty$ factor $M \ovt \B(\ell^2(\N))$ admits no strongly continuous trace scaling action of $\R_+$.

The groups $G$ involved are infinite free product groups
and should be contrasted with the groups $G$ treated in Theorem
\ref{thm.Sfactortrivial}, for which $\Sfactor(G)$ is trivial (cf.\
Remark \ref{rem.smallvslarge}).

\begin{theorem} \label{thm.Sfactorlarge}
Let $\Gamma$ be a non-trivial group, $\Sigma$ an infinite
amenable group and denote $G=\Gamma^{*\infty} * \Sigma$. Then,
$$\Scentr \subset \Sfactor(G)
\qquad\text{and}\qquad \Scentr \subset \Seqrel(G) \; .$$
Moreover, there exist free ergodic p.m.p.\ actions $G \actson
(X,\mu)$ such that the II$_1$ factor $M = \rL^\infty(X) \rtimes G$
has fundamental group $\cF(M)= \Rp$, but the II$_\infty$ factor $M
\ovt \B(\ell^2(\N))$ admits no trace scaling action of $\Rp$.
\end{theorem}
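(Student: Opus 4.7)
The plan is to prove the two inclusions $\Scentr\subset\Seqrel(G)$ and $\Scentr\subset\Sfactor(G)$ by adapting the construction from [PV1] (which handled $G=\F_\infty$) to the broader class $G=\Gamma^{*\infty}*\Sigma$, and then to combine that flexibility with a Baire-category argument to produce the action required by the ``moreover'' statement.

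For the inclusions, fix $\cF\in\Scentr$, witnessed by an amenable free ergodic m.p.\ action $\Lambda\acts(Y,\eta)$ with $\module(\Centr_\Lambda(Y))=\cF$. As in [PV1] one passes via the Mackey--Maharam range construction to a II$_1$ equivalence relation $\cR_0$ on a probability space with $\cF(\cR_0)=\cF$, and the goal is to realize $\cR_0$ as the orbit equivalence relation of a free ergodic p.m.p.\ action of $G$. Here I would use that $\Sigma$, being infinite amenable, generates by Connes--Feldman--Weiss any prescribed hyperfinite ergodic p.m.p.\ relation, hence can carry the ``amenable core'' coming from $\Lambda\acts Y$; and that each copy of $\Gamma$ in $\Gamma^{*\infty}$ can be used, via free products of equivalence relations following Gaboriau, to append further free generators needed to build $\cR_0$ as a free product. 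Non-triviality of $\Gamma$ ensures each copy produces a non-trivial relation, and infinite repetition gives enough room for any countable free generating system. Arranging these actions simultaneously on a common $(X,\mu)$ yields a free action of $G$ whose orbit relation is $\cR_0$; by construction $\cF(\cR(G\acts X))=\cF$. To upgrade to $\cF(\rL^\infty(X)\rtimes G)=\cF$, one arranges at least one $\Gamma$-factor to act in a mixing/weakly-rigid manner and applies intertwining-by-bimodules (Section~2) to conclude that $A=\rL^\infty(X)$ is the unique Cartan subalgebra of $M$ up to unitary conjugacy, whence $\cF(M)=\cF(\cR)$.

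For the ``moreover'' statement I would argue by Baire category in the Polish space $\cA$ of free ergodic p.m.p.\ actions of $G$. The set of $\si\in\cA$ with $\cF(\rL^\infty(X)\rtimes_\si G)\supseteq \Rp$ is comeager: for each countable dense subgroup $F\subset\Rp$ the first part of the theorem yields a dense set of actions whose fundamental group contains $F$, and one intersects over a cofinal countable family. On the other hand, the set of actions for which $M\ovt\B(\ell^2(\N))$ admits a continuous trace-scaling $\Rp$-action is a countable union of closed, nowhere-dense sets: existence of such a continuous action is encoded by countable cocycle/generator data for the would-be modular flow, and generic perturbations of $\si$ preserving the scaling automorphisms will break their strong continuity as a function of $s\in\Rp$. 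The intersection of the two comeager sets is non-empty and yields the desired action with $\cF(M)=\Rp$ admitting no continuous trace-scaling action on $M\ovt\B(\ell^2(\N))$.

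The main obstacle will be making the meagerness of ``admits a continuous trace-scaling $\Rp$-action'' rigorous while simultaneously preserving the $\cF(M)=\Rp$ property. One must fix the right Polish topology on $\cA$ and identify an obstruction --- likely cohomological, such as the failure of a Borel selection for the family $\{\theta_s\in\Aut(M\ovt\B(\ell^2(\N))):s\in\Rp\}$ realizing $\cF(M)=\Rp$ to be strongly continuous --- which is generically present. Executing the perturbation so that the individual trace-scaling automorphisms $\theta_s$ remain available for each $s$ but fail to assemble into a strongly continuous one-parameter action is the delicate technical heart of the argument; conceptually it amounts to showing that the ``wild'' $\R_+$-valued cocycle structure produced by the construction cannot, for a generic choice of auxiliary data, be rectified to a continuous flow.
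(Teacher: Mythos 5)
Your first inclusion argument is broadly in the spirit of what the paper does: both adapt the $\F_\infty$ construction of [PV1] to $G=\Gamma^{*\infty}*\Sigma$. Concretely, the paper takes a free action $\Gamma^{*\infty}*\Lambda\actson(X,\mu)$ whose restriction to $\Gamma^{*\infty}$ is ergodic and rigid and which satisfies $\Emb(\Gamma^{*\infty},\Gamma^{*\infty}*\Lambda)=[[\Gamma^{*\infty}*\Lambda]]$ (Lemma \ref{lemma.nosymmetry}), forms the product action on $X\times Y$ with $\Lambda\actson(Y,\eta)$ an infinite measure preserving action of an amenable group, restricts to a finite measure subset, and re-generates the restricted relation by a free ergodic p.m.p.\ action of $\Gamma^{*\infty}*\Sigma$ (Lemma \ref{lemma.reduction}). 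Your sketch only delivers the lower bound $\cF\subset\cF(\cR)$; equality requires controlling \emph{all} stable self orbit equivalences of the relation, which in the paper is exactly the content of part (1) of Theorem \ref{thm.compute-out}, namely $\Out(\cR)\cong\Centr_{\Aut Y}(\Lambda)$, and this rests on the $\Emb=[[\,\cdot\,]]$ condition. That upper bound needs to be made explicit rather than asserted ``by construction''.

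The genuine gap is in the ``moreover'' statement, where your route diverges completely from the paper's and does not work as described. You propose to show that the set of actions for which $M\ovt\B(\ell^2(\N))$ admits a continuous trace scaling action of $\Rp$ is meager, but you give no mechanism for this, and you yourself flag that the key step --- ruling out \emph{every} strongly continuous one parameter family of trace scaling automorphisms, not merely those arising from your construction --- is unproved. Without a structural description of the full automorphism group of the II$_\infty$ factor, genericity says nothing about exotic trace scaling flows. The paper's argument is not generic but explicit and structural: part (2) of Theorem \ref{thm.compute-out} identifies $\Aut(N)/(\Inn(N)\cdot H)$ with $\Centr_{\Aut Y}(\Lambda)$ as Polish groups, so a strongly continuous trace scaling action exists \emph{if and only if} the homomorphism $\module:\Centr_{\Aut Y}(\Lambda)\recht\Rp$ is onto and splits continuously. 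One then takes $Y=Y_1\times Y_2$ using Lemma \ref{lemma.two-ergodic}: there exist ergodic measures $\nu_1,\nu_2$ on $\R$ with $H_{\nu_1}\neq\R\neq H_{\nu_2}$ and $H_{\nu_1}+H_{\nu_2}=\R$, so that $\module(\Centr_{\Aut Y}(\Lambda_1\times\Lambda_2))=\Rp$, while a continuous splitting would yield continuous homomorphisms $\theta_i:\R\recht H_{\nu_i}$ with $\theta_1(x)+\theta_2(x)=x$; each $\theta_i$, composed with the continuous embedding $H_{\nu_i}\hookrightarrow\R$, must be multiplication by a scalar $\lambda_i$, and $H_{\nu_i}\neq\R$ forces $\lambda_i=0$, a contradiction. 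This is the idea your proposal is missing: the obstruction is an algebraic and topological fact about splittings of $H_{\nu_1}\times H_{\nu_2}\recht\R$, not a Baire category phenomenon.
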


In the course of the proof of Theorem \ref{thm.Sfactorlarge}, we will also
obtain the following result.

\begin{theorem} \label{thm.tensor-product}
There exist II$_1$ factors $M_1$ and $M_2$ such that $\cF(M_1) \neq \Rp \neq \cF(M_2)$,
but nevertheless $\cF(M_1 \ovt M_2) = \Rp$.
\end{theorem}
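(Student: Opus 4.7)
\medskip
\textbf{Strategy.} The plan is to deduce Theorem~\ref{thm.tensor-product} from Theorem~\ref{thm.Sfactorlarge} via the standard amplification formula for tensor products. For any II$_1$ factors $M_1, M_2$ and any $s, t > 0$, choosing projections $p_i$ (possibly after matrix inflation) of appropriate trace yields the canonical isomorphism
\[
(M_1 \ovt M_2)^t \;\cong\; M_1^s \ovt M_2^{t/s}.
\]
Consequently, whenever $s \in \cF(M_1)$ and $t/s \in \cF(M_2)$ one has $t \in \cF(M_1 \ovt M_2)$, so
\[
\cF(M_1) \cdot \cF(M_2) \;\subset\; \cF(M_1 \ovt M_2).
\]
Thus it suffices to exhibit two proper subgroups $\cF_1, \cF_2 \in \Scentr$ with $\cF_1 \cdot \cF_2 = \Rp$, and then invoke Theorem~\ref{thm.Sfactorlarge} to realize $\cF(M_i) = \cF_i$ for $M_i = \rL^\infty(X_i) \rtimes G$ with $G = \Gamma^{*\infty} * \Sigma$.

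\medskip
\textbf{Finding $\cF_1, \cF_2$.} Passing to logarithms, the task becomes: construct two proper Borel subgroups $H_1, H_2 \subset \R$, each of the form $H_{\nu_i}$ for an ergodic measure $\nu_i$ on $\R$ (so that $\exp(H_i) \in \Scentr$), with $H_1 + H_2 = \R$. I would use the Aaronson--Nadkarni machinery recalled in Section~2 to produce $\nu_1, \nu_2$ with $H_{\nu_i}$ uncountable, of Hausdorff dimension close to $1$, and sufficiently transverse so that the sumset $H_1 + H_2$, which is analytic in $\R$ and hence universally measurable, has positive Lebesgue measure. Since any universally measurable proper subgroup of $\R$ must be Lebesgue-null, a positive-measure analytic subgroup of $\R$ is forced to equal $\R$; this yields $H_1 + H_2 = \R$. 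Setting $\cF_i := \exp(H_{\nu_i})$ then provides the required pair, and the amplification inclusion above gives $\cF(M_1 \ovt M_2) = \Rp$ while $\cF(M_i) = \cF_i \neq \Rp$.

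\medskip
\textbf{Main obstacle.} The crux is the transversality construction in the previous step. One cannot cheat with a countable subgroup: if $H_1$ were countable, the relation $H_1 + H_2 = \R$ would force the quotient $\R/H_2$ to be countable, whence $H_2$ would be a Borel subgroup of positive Lebesgue measure and so $H_2 = \R$ by a Steinhaus--Pettis argument. Both $H_1$ and $H_2$ must therefore be genuinely uncountable proper Borel subgroups, yet each individually of measure zero, with a sumset large enough to capture positive Lebesgue measure. Engineering two Aaronson--Nadkarni measures whose translation subgroups behave this way --- for instance by arranging that $H_{\nu_1}$ and $H_{\nu_2}$ arise from ergodic measures supported on sets with near-maximal Hausdorff dimension and complementary additive structure --- is where one expects the real work to lie, and is precisely the place where the flexibility of the AN construction (as exploited in \cite{PV1}) is essential.
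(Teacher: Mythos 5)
Your overall strategy is the same as the paper's: realize $\cF(M_i)=\exp(H_{\nu_i})$ for two ergodic measures $\nu_i$ via Theorem \ref{thm.Sfactorlarge}, and use the amplification inclusion $\cF(M_1)\cdot\cF(M_2)\subset\cF(M_1\ovt M_2)$ to reduce everything to producing proper groups $H_{\nu_1},H_{\nu_2}$ with $H_{\nu_1}+H_{\nu_2}=\R$. That reduction is correct, and so is your Steinhaus-type observation that an analytic (hence Lebesgue measurable) subgroup of $\R$ of positive measure must equal $\R$.

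The genuine gap is that you never establish the one nontrivial fact, namely that the sumset $H_{\nu_1}+H_{\nu_2}$ can be made to have positive measure (equivalently, to be all of $\R$) while both summands remain proper. ``Hausdorff dimension close to $1$ and sufficiently transverse'' is not an argument: large Hausdorff dimension of both summands does not force the sumset to have positive measure --- if $H_1=H_2=H$ is a proper Borel subgroup of dimension arbitrarily close to $1$, then $H_1+H_2=H$ is still Lebesgue-null --- so all the content is hidden in the undefined word ``transverse,'' and you give no construction achieving it within the Aaronson class (where the $H_\nu$ have the specific form $\{x\mid \sum_n \frac{a_n}{b_n}\|a_1\cdots a_{n-1}x\|<\infty\}$). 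This is exactly what the paper's Lemma \ref{lemma.two-ergodic} supplies, by a concrete choice of parameters ($a_n=2^{2n+2},b_n=2^{2n}$ versus $a'_n=2^{2n+1},b'_n=2^{2n-1}$) and an explicit digit-splitting decomposition $x=y+z$ with $y\in H_\nu$, $z\in H_{\nu'}$; note that this argument proves $H_\nu+H_{\nu'}=\R$ directly, bypassing any measure-theoretic or dimension-theoretic considerations. Without an analogue of that lemma your proof does not close.
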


Let $G$ be a countable group with subgroup $\Gamma$. Suppose that $G \actson (X,\mu)$
is a free p.m.p.\ action such that the restriction to $\Gamma$ is ergodic.
Slightly changing notations compared to \cite[Section 2]{PV1}, denote by $\Emb(\Gamma,G)$
the set of non-singular partial automorphisms $\phi$ of $(X,\mu)$ satisfying
$\phi(g \cdot x) \in G \cdot \phi(x)$ for all $g \in \Gamma$ and almost all $x \in X$
with $x,g \cdot x \in D(\phi)$. Denote by $[[G]]$ the full pseudogroup of the
OE relation $\cR(G \actson X)$, i.e.\ the set of a partial automorphisms $\phi$
of $(X,\mu)$ satisfying $\phi(x) \in G \cdot x$ for almost all $x \in D(\phi)$.

The following lemma generalizes \cite[Theorem 4.1]{PV1}.

\begin{lemma} \label{lemma.nosymmetry}
Let $\Gamma$ be an infinite group, $\Lambda$ an arbitrary group, both acting freely
and p.m.p.\ on $(X,\mu)$. There exists a free p.m.p.\ action
$\Gamma^{*\infty} * \Lambda \actson[\al] (X,\mu)$ with the following properties.
\begin{itemize}
\item The restriction of $\al$ to $\Gamma^{*\infty}$ is ergodic and rigid
(in the sense of Definition \ref{def.rigid}).
\item $\Emb(\Gamma^{*\infty},\Gamma^{*\infty} * \Lambda) =
[[\Gamma^{*\infty} * \Lambda]]$.
\item The restriction of $\al$ to any of the copies of $\Gamma$,
resp.\ to $\Lambda$, is conjugate to the originally given action.
\end{itemize}
\end{lemma}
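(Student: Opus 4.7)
The plan is to construct $\al$ via a Baire category argument in a Polish space of candidate actions, extending the strategy of Theorem 4.1 in \cite{PV1}.

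\textbf{Parameterization.} Write $G = \Gamma^{*\infty} * \Lambda$ and denote the given actions by $\gamma_0 : \Gamma \actson (X,\mu)$ and $\lambda_0 : \Lambda \actson (X,\mu)$. Let $\cA$ be the set of actions $\al : G \to \Aut(X,\mu)$ such that $\al|_\Lambda = \lambda_0$ and such that, on the $n$-th copy $\Gamma_n$ of $\Gamma$ in $\Gamma^{*\infty}$, $\al|_{\Gamma_n} = \pi_n\, \gamma_0\, \pi_n^{-1}$ for some $\pi_n \in \Aut(X,\mu)$. Then $\cA$ is naturally identified with the Polish space $\Aut(X,\mu)^{\N}$, and every $\al \in \cA$ automatically fulfills the third bullet of the lemma.

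\textbf{Generic conditions.} I would show that three subsets of $\cA$ are each dense $G_\delta$: (i) those $\al$ that are free and such that $\al|_{\Gamma^{*\infty}}$ is ergodic; (ii) those for which $\al|_{\Gamma^{*\infty}}$ is rigid; (iii) those for which $\Emb(\Gamma^{*\infty}, G) = [[G]]$. Baire category then produces an $\al$ in the intersection. Condition (i) is handled by the standard word-length argument for freeness in free products (each $\al|_{\Gamma_n}$ and $\al|_\Lambda$ being free on its own) together with a classical perturbation argument for ergodicity: a small modification of a single $\pi_n$ breaks the invariance of any fixed non-trivial projection of $\rL^\infty(X)$. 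Condition (ii) can be arranged by fixing an explicit dense family of parameters $(\pi_n)$ producing ``Bernoulli-type'' coupling between the copies of $\Gamma$ that forces the Cartan inclusion $\rL^\infty(X) \subset \rL^\infty(X) \rtimes \Gamma^{*\infty}$ to be rigid, and by observing that rigidity in the sense of Definition \ref{def.rigid} is a $G_\delta$ property in $\cA$.

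\textbf{The main step.} The delicate part is (iii). I would enumerate a countable family $\{\phi_k\}$ of partial isomorphisms that is dense in the Polish topology on all such maps, and show that for each $k$ the set $\{\al \in \cA \mid \phi_k \in \Emb(\Gamma^{*\infty}, G) \setminus [[G]]\}$ is meager. Given $\phi_k \notin [[G]]$, on a set of positive measure $\phi_k(x)$ is not in the $G$-orbit of $x$; the key observation is that by perturbing a single $\pi_n$ for $n$ chosen large enough that $\Gamma_n$ has not yet entered the finite data describing $\phi_k$, one can destroy the required intertwining $\phi_k(\al(g)\cdot x) \in G \cdot \phi_k(x)$ for some $g \in \Gamma_n$. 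Applying this across the countable family and intersecting with the $G_\delta$'s from (i) and (ii) yields the desired $\al$.

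\textbf{Main obstacle.} The central difficulty is the compatibility of (ii) and (iii): rigidity limits the size of admissible perturbations, and the perturbations used to rule out unwanted $\phi_k$'s must be arranged to stay in the rigidity locus. This typically calls for decomposing the parameter space into a ``rigidity-producing'' block fixed at a generic rigid profile and remaining free coordinates used to establish (iii); as a secondary technical point, one must verify that the topology on $\cA$ is fine enough to make rigidity $G_\delta$ while still coarse enough that the exclusion of unwanted intertwiners is generic.
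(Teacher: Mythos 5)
There is a genuine gap, in fact two. First, your treatment of rigidity does not work: rigidity of the inclusion $\rL^\infty(X) \subset \rL^\infty(X) \rtimes \Gamma^{*\infty}$ (relative property (T) in the sense of \cite{P5}) is not a dense, let alone generic, condition in your parameter space $\Aut(X,\mu)^\N$, and there is no known argument making it so; a ``generic rigid profile'' cannot be conjured by Baire category. The paper instead obtains rigidity by a hard, non-generic input: Gaboriau's theorem \cite[Theorem 1.2]{G2} produces a free ergodic \emph{rigid} p.m.p.\ action of the free product of the first two copies $G_{-1} * G_0$ of $\Gamma$ whose restrictions to each factor are conjugate to the given action $\beta$. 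Rigidity of $\rL^\infty(X) \subset \rL^\infty(X) \rtimes (G_{-1}*G_0)$ then automatically passes to the larger crossed product by $\Gamma^{*\infty}$, so this block is fixed once and for all and only the remaining copies of $\Gamma$ (extended one at a time via the Category Lemma of \cite{To} and \cite[Lemma A.1]{IPP}) carry the Baire-category freedom. Your closing remark gestures at exactly this decomposition, but the proposal as written asserts rigidity is a dense $G_\delta$, which is the missing (and false) step.

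Second, the condition $\Emb(\Gamma^{*\infty},\Gamma^{*\infty}*\Lambda)=[[\Gamma^{*\infty}*\Lambda]]$ is not established in the paper as a comeager condition over a countable dense family of partial isomorphisms $\{\phi_k\}$, and your reduction to such a family is unjustified: membership of $\phi$ in $\Emb$ depends on $\al$ and is not stable under small perturbations of $\phi$, so controlling a dense set of $\phi_k$'s does not control all of $\Emb$. The paper's route is different: one first builds the full action $\al_\infty$ of $\bigfree_{n\geq -1} G_n * \Lambda$, and then, following the proof of \cite[Theorem 4.1]{PV1}, uses a separability argument to select an \emph{infinite subset} $E \subset \N$ such that $\Emb(\Gamma_E,\Gamma_E*\Lambda)=[[\Gamma_E*\Lambda]]$, where $\Gamma_E = G_{-1}*G_0*\bigfree_{n\in E}G_n$; the lemma then follows because $\Gamma_E \cong \Gamma^{*\infty}$. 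Note also that in that argument the rigidity of the $G_{-1}*G_0$-block is an essential \emph{ingredient} in proving the $\Emb$ statement (it is what converts an element of $\Emb$ into an honest intertwiner that can be confronted with the freshly adjoined copies of $\Gamma$), not an independent generic condition to be intersected with it. Your proposal treats (ii) and (iii) as separate comeager sets, which inverts the logical structure of the actual proof.
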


\begin{proof}
Denote the given actions by $\Gamma \actson[\beta] (X,\mu)$ and
$\Lambda \actson[\rho] (X,\mu)$. We introduce the following notations:
\begin{align*}
\Gamma^{*\infty} = & \ \bigfree_{n=-1}^\infty G_n \quad\text{with all}\;\;
G_n \cong \Gamma \; , \\
\Gamma_n := & \ \bigfree_{k=-1}^n G_k \; , \\ \Gamma_E := & \ G_{-1} * G_0 *
\bigfree_{n \in E} G_n \quad\text{whenever}\;\; E \subset \N \; .
\end{align*}
By \cite[Theorem 1.2]{G2}, take a free ergodic p.m.p.\ action
$\Gamma_0 \actson[\al_0] (X,\mu)$ such that $\al_0$ is a rigid
action and such that the restrictions of $\al_0$ to $G_{-1}$ and $G_0$ are
conjugate to the action $\beta$. By \cite[Category Lemma]{To} and
\cite[Lemma A.1]{IPP}, extend $\al_0$ to a free action of $\Gamma_0 *
\Lambda$ on $(X,\mu)$, still denoted by $\al_0$, whose restriction to
$\Lambda$ is conjugate to the action $\rho$.

Extend the action $\al_0$ inductively to free actions $\Gamma_n *
\Lambda \actson[\al_n] (X,\mu)$ following the procedure in \cite[Section 3]{PV1}
and such that the restriction of $\al_n$ to $G_k \subset \Gamma_n$ is conjugate
to $\beta$ for all $k \leq n$. We end up with the free action
$\Gamma^{*\infty} * \Lambda \actson[\al_\infty] (X,\mu)$.
For every infinite subset $E \subset \N$, we denote by $\al_E$
the restriction of $\al_\infty$ to $\Gamma_E * \Lambda$.
Following the proof of \cite[Theorem 4.1]{PV1}, there exists an infinite
subset $E \subset \N$ such that $\Emb(\Gamma_E,\Gamma_E * \Lambda) = [[\Gamma_E * \Lambda]]$.
Since $\Gamma_E \cong \Gamma^{*\infty}$, the lemma is proved.
\end{proof}

\begin{remark}
Using the methods of \cite[Section 2.3]{G2}, Lemma \ref{lemma.nosymmetry}
can be shown for $\Gamma_1 * \Gamma_2$ instead of the infinite free product
$\Gamma^{*\infty}$, for arbitrary infinite groups $\Gamma_1,\Gamma_2$ with
given free p.m.p.\ actions on $(X,\mu)$. Such a generalization does not provide
a refinement for Theorem \ref{thm.Sfactorlarge} though, since the proof of
Theorem \ref{thm.Sfactorlarge} involves taking once more an infinite free product.
\end{remark}

For the formulation of the following theorem, recall that the automorphism group $\Aut(N)$ of a von Neumann algebra with separable
predual is a Polish group under the topology making the maps $\Aut(N) \recht N_* :
\al \mapsto \om \circ \al$ continuous for all $\om \in N_*$. Similarly, the group $\Aut(Y,\eta)$ of non-singular isomorphisms of $(Y,\eta)$ (up to equality almost everywhere) is a Polish group and $\Centr_{\Aut Y}(\Gamma_2)$ is a closed subgroup whenever $\Gamma_2 \actson (Y,\eta)$ is a non-singular action.

\begin{theorem} \label{thm.compute-out}
Let $\Gamma_1 * \Gamma_2 \actson[\al] (X,\mu)$ be a free p.m.p.\ action.
Let $\Gamma_2 \actson (Y,\eta)$ be a free ergodic action preserving the
infinite standard measure $\eta$. Consider the action
$\Gamma_1 * \Gamma_2 \actson X \times Y$ given by
\begin{equation} \label{eq.action}
g \cdot (x,y) = (g \cdot x,y) \;\;\forall g \in \Gamma_1 \;\; , \;\; h \cdot (x,y) =
(h \cdot x ,h \cdot y) \;\;\forall h \in \Gamma_2 \; .
\end{equation}
Make the following assumptions.
\begin{itemize}
\item The restriction of $\al$ to $\Gamma_1$ is ergodic and rigid.
\item We have $\Emb(\Gamma_1,\Gamma_1*\Gamma_2) = [[\Gamma_1*\Gamma_2]]$.
\item $\Gamma_2$ is amenable.
\end{itemize}
Then, the following holds.
\begin{enumerate}
\item\label{one} The map
$$\Theta : \Centr_{\Aut Y}(\Gamma_2) \recht
\Aut(\cR(\Gamma_1 * \Gamma_2 \actson X \times Y)) :
\Delta \mapsto \Theta_\Delta \;\;
$$
$$\text{where}\;\; \Theta_\Delta(x,y) =
(x,\Delta(y))$$
induces an onto group isomorphism between $\Centr_{\Aut Y}(\Gamma_2)$
and $\Out(\cR(\Gamma_1 * \Gamma_2 \actson X \times Y))$.
\item\label{two} Define the II$_\infty$ factor $N := \rL^\infty(X \times Y)
\rtimes (\Gamma_1 * \Gamma_2)$. Denote for every $\Delta \in \Centr_{\Aut Y}(\Gamma_2)$, by $\theta_\Delta$ the corresponding automorphism of $N$.
\begin{enumerate}
\item The group $\Aut(N)$ is generated by the three subgroups $\{\theta_\Delta \mid \Delta \in \Centr_{\Aut Y}(\Gamma_2)\}$, the inner automorphism group $\Inn(N) = \{ \Ad u \mid u \in \cU(N)\}$ and the group of automorphisms\footnote{Note that $H$ is isomorphic to the group of $S^1$-valued $1$-cocycles for the action $\Gamma_1 * \Gamma_2 \actson X \times Y$.} $H := \{\theta \in \Aut(N) \mid \theta(a) = a \;\;\text{for all}\;\; a \in \rL^\infty(X \times Y)\}$.
\item The subgroup $\Inn(N) \cdot H$ of $\Aut(N)$ is closed and normal in $\Aut(N)$ and the map
$$\Centr_{\Aut Y}(\Gamma_2) \recht \frac{\Aut(N)}{\Inn(N) \cdot H} : \Delta \mapsto \theta_\Delta$$
is an isomorphism and homeomorphism of Polish groups.
\end{enumerate}
\end{enumerate}
\end{theorem}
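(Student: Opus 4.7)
The strategy is to verify first that $\Theta_\Delta \in \Aut(\cR)$ for $\Delta \in \Centr_{\Aut Y}(\Gamma_2)$ and that $\Delta \mapsto [\Theta_\Delta]$ injects into $\Out(\cR)$, both by elementary arguments, and then to establish surjectivity via Popa's intertwining-by-bimodules applied to $\theta(A_0)$ and $A_0$, where $A_0 = \rL^\infty(X) \otimes 1 \subset A = \rL^\infty(X \times Y)$; part (2) will be deduced from the same intertwining argument. For the easy direction, $\Theta_\Delta \in \Aut(\cR)$ follows from an orbit check: the $\Gamma_1$-case is trivial, while the $\Gamma_2$-case uses $\Delta \circ h = h \circ \Delta$. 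If $\Theta_\Delta$ lies in the full pseudogroup $[[\cR]]$, then $(x, \Delta(y)) = g(x,y) \cdot (x,y)$ almost everywhere for some measurable $g : X \times Y \recht \Gamma_1 * \Gamma_2$; projecting to the first coordinate and invoking freeness of $\Gamma_1 * \Gamma_2 \actson X$ forces $g(x,y) = e$, and hence $\Delta = \id$.

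For surjectivity in (1), given $\Theta \in \Aut(\cR)$ with induced $\theta \in \Aut(N)$ preserving $A$, the plan is to show $\theta(A_0) = A_0$ modulo an inner automorphism; once this is done, $\theta(A) = A$ follows by taking commutants (using $A_0' \cap N = A$, which holds by freeness of the action on $X$), and orbit preservation will then narrow $\theta$ to the desired form. To obtain the intertwining, I cut by a projection $p = 1 \otimes 1_{Y_0}$ with $\eta(Y_0) = 1$ to reduce to the II$_1$ corner. The rigid inclusion $A_0 \subset A_0 \rtimes \Gamma_1$ persists as a rigid inclusion inside $N$, viewed as the amalgamated free product $(A \rtimes \Gamma_1) *_A (A \rtimes \Gamma_2)$; combining this with amenability of $\Gamma_2$ and Popa-style deformation/rigidity arguments for amalgamated free products yields a first embedding $\theta(A_0) \embed{N} A \rtimes \Gamma_1$. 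The asymmetry hypothesis $\Emb(\Gamma_1, \Gamma_1 * \Gamma_2) = [[\Gamma_1 * \Gamma_2]]$ is then used to refine this to $\theta(A_0) \embed{N} A_0$, and \cite[Theorem A.1]{P5} (applicable since $A_0' \cap N = A$) supplies a unitary $u \in N$ that allows us to assume $\theta(A_0) = A_0$.

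After this reduction, the restriction of $\theta$ to $A$ corresponds to a non-singular automorphism of $X \times Y$ of the form $(x,y) \mapsto (\phi(x), \psi_x(y))$. Orbit preservation in the $X$-direction forces $\phi \in \Emb(\Gamma_1, \Gamma_1 * \Gamma_2) = [[\Gamma_1 * \Gamma_2]]$, and a further inner adjustment coming from the normalizer of $A$ in $N$ reduces to $\phi = \id$. Ergodicity of $\Gamma_1 \actson X$ together with $\Gamma_1$-equivariance of the modified $\theta$ forces $\psi_x = \Delta$ to be independent of $x$, and the $\Gamma_2$-equivariance is exactly the condition $\Delta \in \Centr_{\Aut Y}(\Gamma_2)$, finishing (1). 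For (2)(a), the intertwining argument uses only that $\theta$ is an automorphism of $N$, not that $\theta(A) = A$ a priori: applied to any $\theta \in \Aut(N)$ it yields $u \in \cU(N)$ with $u \theta(A) u^* = A$, so that $\Ad u \circ \theta \in \Aut(N,A)$ and (1) applies. For (2)(b), the isomorphism is the content of (1) together with (a); closedness of $\Inn(N) \cdot H$, its normality, and the homeomorphism property are checked via the Polish group structures, using continuity of $\Delta \mapsto \theta_\Delta$ and a standard open mapping argument.

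The main obstacle will be the intertwining step, and more specifically the refinement from $\theta(A_0) \embed{N} A \rtimes \Gamma_1$ to $\theta(A_0) \embed{N} A_0$. While the first embedding is reasonably standard given rigidity, amenability of $\Gamma_2$, and the malleability/deformation techniques for amalgamated free products, the refinement is delicate: it is precisely where the asymmetry condition $\Emb(\Gamma_1, \Gamma_1 * \Gamma_2) = [[\Gamma_1 * \Gamma_2]]$ must be invoked to rule out partial embeddings of $\theta(A_0)$ into $\Gamma_2$-twisted copies of $A_0$, and carrying this out cleanly inside the amalgamated free product is the technically demanding point of the argument.
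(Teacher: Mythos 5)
Your overall architecture --- injectivity of $\Delta \mapsto [\Theta_\Delta]$ via freeness of the action on $X$, an intertwining-by-bimodules argument showing that every $\theta \in \Aut(N)$ preserves the Cartan subalgebra up to unitary conjugacy (which is exactly how the paper reduces 2(a) to (1)), and the Polish-group/open-mapping argument for 2(b) --- agrees with the paper. The first half of your intertwining chain (rigidity of a finite corner of $\rL^\infty(X)$ inside the amalgamated free product $(A \rtimes \Gamma_1) *_A (A \rtimes \Gamma_2)$ yields an embedding into $\rL^\infty(X\times Y) \rtimes \Gamma_i$, via \cite[Theorem 5.1]{IPP}) is also the paper's.

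There is, however, a genuine gap in how you finish part (1), concentrated precisely at the step you flag as the technically demanding one. The refinement from $\theta(A_0) \embed{N} \rL^\infty(X\times Y) \rtimes \Gamma_1$ to $\theta(A_0) \embed{N} A_0$ is neither what is needed nor what your tools deliver: the paper only needs $\theta(A_0 p) \embed{qNq} \rL^\infty(X\times Y)q$, i.e.\ an embedding into the \emph{full} Cartan subalgebra, and obtains it from quasi-regularity of $\theta(A_0 p)$ together with \cite[Theorem 1.1]{IPP}; the asymmetry hypothesis plays no role there. Moreover, \cite[Theorem A.1]{P5} does not ``allow us to assume $\theta(A_0) = A_0$'': given $A_0' \cap N = \rL^\infty(X\times Y)$ and an embedding of $\theta(A_0 p)$ into the target Cartan subalgebra, its conclusion is that the two \emph{Cartan} subalgebras $\theta(\rL^\infty(X\times Y))$ and $\rL^\infty(X\times Y)$ are unitarily conjugate; it says nothing about conjugating $A_0$ onto itself, and a normalizing unitary of the Cartan subalgebra will in general move $A_0$. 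Consequently, the hypotheses $\Emb(\Gamma_1,\Gamma_1 * \Gamma_2) = [[\Gamma_1 * \Gamma_2]]$ and the amenability of $\Gamma_2$ are misplaced in your plan: in the paper both enter only in the purely ergodic-theoretic computation of $\Out(\cR)$ (part (1), taken verbatim from \cite[Lemma 5.1]{PV1}), where one starts from an automorphism $\Theta$ of $\cR$ itself, observes that the fiberwise maps over $Y$ carry $\Gamma_1$-orbits in $X$ into $(\Gamma_1 * \Gamma_2)$-orbits and hence lie in $\Emb(\Gamma_1,\Gamma_1 * \Gamma_2) = [[\Gamma_1 * \Gamma_2]]$, and then uses ergodicity of $\Gamma_1 \actson X$ and the $\Gamma_2$-direction (where amenability is used) to reduce $\Theta$ to some $\Theta_\Delta$ modulo the full group. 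Your proposal never supplies this groupoid-level argument, so surjectivity of $\Delta \mapsto [\Theta_\Delta]$ --- the heart of part (1), on which 2(a) and 2(b) then rest --- is not actually established.
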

\begin{proof}
The proof of (\ref{one}) is identical to \cite[Lemma 5.1]{PV1}.
It remains to prove (\ref{two}).

Write $A = \rL^\infty(X)$ and $B =
\rL^\infty(Y)$. We first prove that every automorphism of $N$ preserves the Cartan subalgebra $A \ovt B$ up to unitary conjugacy. Together with point 1, this implies 2(a). So, let $\theta$ be an automorphism of $N := (A \ovt B) \rtimes
(\Gamma_1 * \Gamma_2)$. Take a projection $p \in A \ovt B$ of finite trace
and put $q = \theta(p)$. After unitary conjugacy, we may assume that $q \in A \ovt B$.
By \cite[Theorem A.1]{P5}, it is sufficient to prove that $\theta(Ap) \embed{qNq} (A \ovt B)q$.

Since $\theta(Ap) \subset qNq$ is rigid, \cite[Theorem 5.1]{IPP} implies that
$$\theta(Ap) \embed{qNq} q \bigl( (A \ovt B) \rtimes \Gamma_i \bigr)
q \quad\text{for some}\;\; i=1,2.$$
Since $\theta(Ap)$ is quasi-regular in $qNq$, \cite[Theorem 1.1]{IPP}
implies that $\theta(Ap) \embed{qNq} (A \ovt B)q$.

We finally prove 2(b). Observe that $\cU(N)$ is a Polish group in a natural way and that the map $\cU(N) \recht \Aut(N) : u \mapsto \Ad u$ is a continuous group morphism. Define $H$ as in the formulation of the theorem and note that $H$ is a closed subgroup of $\Aut(N)$. We form the semidirect product Polish group $\cU(N) \rtimes H$ in such a way that $\pi : \cU(N) \rtimes H \recht \Aut(N) : \pi(u,\theta) = (\Ad u) \circ \theta$ is a group morphism. Note that $\pi$ is continuous and denote $K := (\cU(N) \rtimes H)/\Ker \pi$. Again, $K$ is a Polish group. We form the semidirect product Polish group $K \rtimes \Centr_{\Aut Y}(\Gamma_2)$ in such a way that
$$\rho : K \rtimes \Centr_{\Aut Y}(\Gamma_2) \recht \Aut(N) : \rho(k,\Delta) = \pi(k) \theta_\Delta$$
is a group morphism. Then $\rho$ is a continuous and injective group morphism between Polish groups. Moreover, by (2a), $\rho$ is onto. So, $\rho$ is a homeomorphism. Hence, $\Inn(N) \cdot H = \rho(K)$ is closed and normal in $\Aut(N)$ and the map $\Delta \mapsto \theta_\Delta$ provides an isomorphism and homeomorphism between $\Centr_{\Aut Y}(\Gamma_2)$ and $\Aut(N) / (\Inn(N) \cdot H)$.
\end{proof}

\begin{lemma} \label{lemma.reduction}
Let $\Gamma * \Lambda \actson (X,\mu)$ be a free p.m.p.\ action with
the restriction to $\Gamma$ being ergodic. Let $\Lambda \actson (Y,\eta)$
be a free ergodic action preserving the infinite standard measure $\eta$.
Assume that $\Lambda$ is amenable. Consider $\Gamma * \Lambda \actson X \times Y$
as in \eqref{eq.action}. Let $Z \subset Y$ be a subset of finite measure and define
the II$_1$ equivalence relation $\cR$ as the restriction of
$\cR(\Gamma * \Lambda \actson X \times Y)$ to $Z$.

Whenever $\Sigma$ is an infinite amenable group, there exists a free
ergodic p.m.p.\ action $\Gamma^{*\infty} * \Sigma \actson X \times Z$
such that $\cR = \cR(\Gamma^{*\infty} * \Sigma \actson X \times Z)$.
\end{lemma}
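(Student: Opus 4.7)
My plan is to exploit the algebraic structure of $\Gamma * \Lambda$ via the projection $\pi\colon\Gamma * \Lambda \recht \Lambda$ that kills $\Gamma$. By Kurosh's subgroup theorem, its kernel $K$ decomposes as the free product of the conjugates $\lambda \Gamma \lambda^{-1}$ indexed by $\lambda \in \Lambda$; since $\Lambda$ carries a free ergodic action on an infinite standard measure space, it is infinite, so $K \cong \Gamma^{*\infty}$. Every $w \in \Gamma * \Lambda$ factors uniquely as $w = hk$ with $h = \pi(w) \in \Lambda$ and $k \in K$, and since $K$ acts trivially on $Y$, the formula \eqref{eq.action} gives $w \cdot (x,z) = (hkx, hz)$. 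A direct verification then shows that $(x_1,z_1) \sim_\cR (x_2,z_2)$ if and only if there exists $h \in \Lambda$ with $h z_1 = z_2$ and $x_2 \in K \cdot h x_1$, so that
$$\cR \;=\; \cR_K \vee \cR_\Lambda',$$
where $\cR_K$ is generated by the free $K$-action on $X$ (acting trivially in the $Z$-coordinate) and $\cR_\Lambda'$ is the restriction to $X \times Z$ of the $\Lambda$-diagonal orbit equivalence on $X \times Y$.

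Since $\Lambda$ is amenable, $\cR_\Lambda'$ is hyperfinite. By the Connes-Feldman-Weiss theorem I can therefore realize $\cR_\Lambda'$ as the orbit equivalence relation of a free p.m.p.\ action of the arbitrary infinite amenable group $\Sigma$ on $X \times Z$, handling the possibly non-ergodic case by working fiberwise over the ergodic decomposition. Using the identification $K \cong \Gamma^{*\infty}$, the restriction of the $\Gamma * \Lambda$-action on $X$ to $K$ yields a free $\Gamma^{*\infty}$-action, which I extend trivially in $Z$. Combining this with the chosen $\Sigma$-action produces an action of $\Gamma^{*\infty} * \Sigma$ on $X \times Z$ whose orbit equivalence relation equals $\cR_K \vee \cR_\Lambda' = \cR$. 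This combined action is automatically ergodic, because $\cR$ is an ergodic II$_1$ equivalence relation.

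The main technical obstacle I expect is to ensure that the combined action of the free product $\Gamma^{*\infty} * \Sigma$ is itself free: freeness of the two factor actions does not in general imply freeness of the free product action, since a non-trivial reduced word could have fixed points of positive measure due to algebraic interaction between the two actions. I resolve this by varying the $\Sigma$-action within the Polish space of free p.m.p.\ actions whose orbit equivalence relation is $\cR_\Lambda'$, exploiting the flexibility obtained by pre-composition with elements of the full group $[\cR_\Lambda']$. Following the Baire category method used in the proof of Lemma \ref{lemma.nosymmetry} (based on \cite[Category Lemma]{To} and \cite[Lemma A.1]{IPP}), I argue that for each non-trivial reduced word $w \in \Gamma^{*\infty} * \Sigma$ the set of $\Sigma$-actions for which $w$ acts without fixed points on a positive-measure set is a dense $G_\delta$; intersecting over the countable family of such words delivers the required free, ergodic p.m.p.\ action of $\Gamma^{*\infty} * \Sigma$, completing the construction.
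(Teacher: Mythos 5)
Your decomposition $\cR=\cR_K\vee\cR_\Lambda'$ is correct, and realizing $\cR_\Lambda'$ (the paper's $\cR_1$) by a free $\Sigma$-action via Connes--Feldman--Weiss is exactly the paper's first step. The fatal flaw is your choice of the $\Gamma^{*\infty}$-factor: letting the normal subgroup $K=\Ker(\Gamma*\Lambda\recht\Lambda)$ act on the $X$-coordinate alone (trivially on $Z$) can never be combined with \emph{any} free $\Sigma$-action realizing $\cR_\Lambda'$ into a free action of $K*\Sigma$, so there is nothing for your Baire category argument to select --- the bad set is everything, not just a meager obstruction. Conceptually, normality of $K$ makes $\cR_K$ a normal subrelation of $\cR$: a local $\cR_\Lambda'$-move $(x,z)\mapsto(\mu x,\mu z)$ conjugates the $\cR_K$-move $k$ into the $\cR_K$-move $\mu k\mu^{-1}$, and a proper normal subrelation with infinite classes cannot be a free factor. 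Concretely, fix $\sigma\in\Sigma\setminus\{e\}$ and let $c_\sigma(x,z)\in\Lambda$ be the unique element with $\alpha(\sigma)(x,z)=(c_\sigma(x,z)\, x,\,c_\sigma(x,z)\, z)$. Unwinding the action of the reduced word $k_1\sigma^{-1}k_2\sigma$ and using freeness of $\Gamma*\Lambda\actson X$, one checks that $(x,z)$ is a fixed point precisely when, setting $\mu:=c_\sigma(x,z)$ and $k'':=\mu^{-1}k_2\mu$, one has $k_1=(k'')^{-1}$ and $c_\sigma(k''\cdot x,z)=\mu$. Hence the union of the fixed-point sets of these countably many non-trivial reduced words contains
$$E:=\{(x,z)\mid \exists k\in K\setminus\{e\}\;\;\text{with}\;\; c_\sigma(k\cdot x,z)=c_\sigma(x,z)\}\;.$$
The complement of $E$ is null: for each fixed $\mu_0\in\Lambda$, the set $\{(x,z)\notin E\mid c_\sigma(x,z)=\mu_0\}$ meets each $\cR_K$-class $Kx\times\{z\}$ in at most one point, and a partial transversal of a p.m.p.\ relation with infinite classes on a finite measure space is null (its translates under the free $K$-action are disjoint and of equal measure); summing over the countably many $\mu_0$ kills all of $E^c$. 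So for \emph{every} choice of $\alpha$, some non-trivial reduced word has a fixed-point set of positive measure, and the combined action is not free.

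The paper avoids this by never using the normal subgroup $K$. Since $\Lambda\actson(Y,\eta)$ is ergodic, it chooses $\phi_n\in[[\Lambda]]$ mapping $X\times Z$ onto $X\times Z_n$ for a partition $Y=\bigsqcup_n Z_n$, and takes as the $n$-th copy of $\Gamma$ the group $\phi_n^{-1}\Gamma\phi_n$ acting on $X\times Z$. These copies are indexed by the pieces of a partition of $Y$ rather than by the elements of $\Lambda$, they genuinely move the $Z$-coordinate, and the disjointness of the $Z_n$ forces a reduced word in them and in $\Sigma$ to be locally implemented by a reduced word of $\Gamma*\Lambda$; that is where free generation (hence freeness of the resulting $\Gamma^{*\infty}*\Sigma$-action) comes from. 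To salvage your approach you would have to replace $K\actson X\times Z$ by such a family of conjugates adapted to a transversal of the $\Lambda$-action on $Y$; as written, the construction cannot be repaired by genericity.
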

\begin{proof}
Denote by $\cR_1$ the equivalence relation given by the restriction of
$\cR(\Lambda \actson X \times Y)$ to $X \times Z$. Note that $\cR_1$
need not be ergodic. Since $\Lambda$ is amenable and almost every equivalence
class of $\cR_1$ is infinite, the results in \cite{CFW} and \cite{OW} allow one to
take a free p.m.p.\ action $\Sigma \actson X \times Z$ whose OE
relation is precisely $\cR_1$.

Since the action of $\Lambda$ on $(Y,\eta)$ is ergodic, take $\phi_n \in
[[\Lambda]]$ with $\dom(\phi_n) = X \times Z$ and $\operatorname{range}(\phi_n) =
X \times Z_n$, where $Z_n$, $n \in \N$, forms a partition of $Y$ (up to measure zero).
Since the action of $\Gamma$ leaves every $X \times Z_n$ globally invariant,
we can view $\phi_n^{-1} \Gamma \phi_n$ as a group of automorphisms of $X \times Z$.
It is now an exercise to check that $\cR$ is freely generated by the OE
relations of $\phi_n^{-1} \Gamma \phi_n$, $n \in \N$, together with $\Sigma \actson X \times Z$.
This provides us with the required free action of $\Gamma^{*\infty} * \Sigma \actson X \times Z$.
\end{proof}

The following is the final ingredient in the proof of Theorem \ref{thm.Sfactorlarge}.

\begin{lemma} \label{lemma.two-ergodic}
There exist ergodic measures $\nu,\nu'$ on $\R$ such that
$H_{\nu} \neq \R \neq H_{\nu'}$ and $H_{\nu} + H_{\nu'} = \R$.
\end{lemma}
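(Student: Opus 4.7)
The plan is to construct $\nu'$ by rescaling $\nu$ and to control the subgroups via a projection-theoretic argument. First, invoke \cite{A} to take an ergodic measure $\nu$ on $\R$ for which $H := H_\nu$ is Borel with $\dim_\rH(H) = \al$ for some fixed $\al \in (1/2, 1)$; in particular $H \neq \R$. For $s \in \R \setminus \{0\}$, write $m_s(x) = sx$ and set $\nu_s := \nu \circ m_s^{-1}$. The identity $m_s^{-1} \circ \lambda_x = \lambda_{x/s} \circ m_s^{-1}$ gives $\nu_s \circ \lambda_x = \nu \circ \lambda_{x/s} \circ m_s^{-1}$ for every $x \in \R$, so either $\nu_s \circ \lambda_x = \nu_s$ (when $x/s \in H$) or the two are mutually singular. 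Combined with the countable subgroup $sQ$ inherited from the $Q$ attached to $\nu$, this shows $\nu_s$ is again an ergodic measure on $\R$, with $H_{\nu_s} = sH$. It therefore suffices to find $s \neq 0$ with $H + sH = \R$, since then $\nu' := \nu_s$ satisfies the conclusion.

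The heart of the argument is producing such an $s$. The plan is to combine Marstrand's projection theorem with Steinhaus's theorem. Since $H$ is Borel of Hausdorff dimension $\al > 1/2$, a standard Davies-type result on Hausdorff dimension of analytic sets yields a compact subset $K \subset H$ with $\dim_\rH(K) > 1/2$. The Marstrand/Howroyd product inequality then gives $\dim_\rH(K \times K) \geq 2\dim_\rH(K) > 1$. Marstrand's projection theorem now guarantees that for almost every $s \in \R$, the oblique projection $\{a + sb : a,b \in K\} = K + sK$ has positive one-dimensional Lebesgue measure. Fix any such $s \neq 0$. Since $H$ is a subgroup of $\R$, so is $H + sH$; moreover $H + sH$ is analytic (the continuous image of $H \times H$ under $(a,b) \mapsto a + sb$), contains $K + sK$, and therefore has positive Lebesgue measure. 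Steinhaus's theorem, applied to the measurable subgroup $H + sH$, forces $H + sH = \R$.

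Setting $\nu' := \nu_s$ for this $s$ finishes the construction: $\nu,\nu'$ are both ergodic measures, both $H_\nu = H$ and $H_{\nu'} = sH$ have Hausdorff dimension $\al < 1$ (hence are proper in $\R$), while $H_\nu + H_{\nu'} = \R$. The step I expect to be the main obstacle is the fractal-geometric input: one must check that Aaronson's construction in \cite{A} genuinely produces $\nu$ with $\dim_\rH(H_\nu) > 1/2$, and one must have both the lower bound $\dim_\rH(A \times B) \geq \dim_\rH(A) + \dim_\rH(B)$ and Marstrand's projection theorem at hand for an appropriate compact subset (the classical statements being cleanest for compact sets, so one passes through Davies's theorem to reduce from Borel to compact). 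The algebraic content — that $\nu_s$ is again an ergodic measure with $H_{\nu_s} = sH$ — is a routine unwinding of the definitions.
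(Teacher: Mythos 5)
Your proof is correct, but it takes a genuinely different route from the paper. The paper's proof is entirely explicit and elementary: it builds \emph{two distinct} ergodic measures from the concrete sequences $a_n = 2^{2n+2}$, $b_n = 2^{2n}$ and $a'_n = 2^{2n+1}$, $b'_n = 2^{2n-1}$, verifies $H_\nu \neq \R \neq H_{\nu'}$ by exhibiting explicit elements outside each group, and proves $H_\nu + H_{\nu'} = \R$ by a hands-on digit-splitting argument (writing each $x$ in the mixed-radix expansion with digits $x_n$ and splitting $x_n = y_n + \sqrt{a_n}\,z_n$ into low and high parts, which land in $H_\nu$ and $H_{\nu'}$ respectively). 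You instead take a single $H_\nu$ of Hausdorff dimension $\alpha \in (1/2,1)$, observe that rescaling by $s$ produces another ergodic measure with $H_{\nu_s} = sH_\nu$ (this unwinding is indeed routine and correct), and then use Davies's theorem, the product inequality $\dim_\rH(K \times K) \geq 2\dim_\rH(K) > 1$, Marstrand's projection theorem, and Steinhaus to find $s \neq 0$ with $H + sH = \R$; as you note, Steinhaus can even be applied directly to the compact positive-measure set $K + sK$, so measurability of $H+sH$ is not an issue, and the input you flag as the main obstacle (arbitrary Hausdorff dimension for $H_\nu$) is exactly what the paper quotes from Aaronson. Your argument is softer and shows the phenomenon is generic in $s$, at the cost of importing nontrivial geometric measure theory and losing the explicitness that the authors elsewhere emphasize; the paper's version is longer to write down but self-contained modulo the formula for $H_\nu$ from \cite{A} and \cite{PV1}.
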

\begin{proof}
As explained in \cite[Section 2]{PV1}, an ergodic measure $\nu$ on
$\R$ can be associated to any pair $(a_n),(b_n)$ of sequences in $\N$
satisfying $\sum_{n=1}^\infty b_n^{-1} < \infty$ and $b_n < a_n/2$ for
all $n$, in such a way that
$$H_\nu = \Bigl\{ x \in \R \; \Big | \; \sum_{n=1}^\infty \frac{a_n}{b_n}
\| a_1 \cdots a_{n-1} x \| < \infty \Bigr\}$$
where $\|x\|$ denotes the distance of $x \in \R$ to $\Z \subset \R$.
Take $a_n = 2^{2n+2}$, $b_n = 2^{2n}$ and associate with it the ergodic
measure $\nu$. Take $a'_n = 2^{2n+1}$, $b'_n = 2^{2n-1}$ and associate
with it the ergodic measure $\nu'$. First of all,
$$\sum_{n=1}^\infty \frac{b_n}{a_1 \cdots a_n} \not\in H_\nu
\quad\text{and}\quad \sum_{n=1}^\infty \frac{b'_n}{a'_1 \cdots a'_n} \not\in H_{\nu'}$$
proving that $H_\nu \neq \R \neq H_{\nu'}$.

Let now $x \in \R$ and write
$$x = x_0 + \sum_{n=1}^\infty \frac{x_n}{a_1 \cdots a_n}
\quad\text{with}\quad x_n \in \{0,\ldots,a_n - 1\} \; .$$
Write for every $n \in \N$, $x_n = y_n + \sqrt{a_n} z_n$
with $y_n,z_n \in \{0,\ldots,\sqrt{a_n}\}$. Define
$$y = x_0 + \sum_{n=1}^\infty \frac{y_n}{a_1 \cdots a_n}
\quad\text{and}\quad z = \sum_{n=1}^\infty \frac{2z_n}{a'_1 \cdots a'_n} \; .$$
One checks that $y \in H_\nu$, $z \in H_{\nu'}$ and $x = y+z$. So, $H_\nu + H_{\nu'} = \R$.
\end{proof}

\begin{proof}[Proof of Theorem \ref{thm.Sfactorlarge}]\label{proof.thmlarge}
Since $\Gamma^{*\infty} = (\Gamma * \Gamma)^{*\infty}$, we may assume that $\Gamma$ is an infinite group. Let $\Sigma,\Lambda$ be infinite amenable groups and $\Lambda \actson (Y,\eta)$ a free ergodic action
preserving the infinite standard measure $\eta$. Set $G := \Gamma^{*\infty} * \Sigma$. We prove the existence of a free ergodic p.m.p.\ action of $G \actson Z$ such that the associated II$_1$ factor $M := \rL^\infty(Z) \rtimes G$ and equivalence relation $\cR : = \cR(G \actson Z)$ have the following properties.
\begin{enumerate}
\item The fundamental group of $M$ and the fundamental group of $\cR$ equal $\module(\Centr_{\Aut Y}(\Lambda))$.
\item The II$_\infty$ factor $M \ovt \B(\ell^2(\N))$ admits a strongly continuous trace-scaling action of $\Rp$ if and only if the group morphism
   \begin{equation}\label{eq.groupmorphism}
   \module : \Centr_{\Aut Y}(\Lambda) \recht \Rp
   \end{equation}
   is onto and splits continuously.
\end{enumerate}

Choose any free p.m.p.\ actions $\Gamma \actson (X,\mu)$ and $\Lambda \actson (X,\mu)$.
Take a free p.m.p.\ action $\Gamma^{*\infty} * \Lambda \actson (X,\mu)$ satisfying the
conclusions of Lemma \ref{lemma.nosymmetry}. Define $\Gamma^{*\infty} * \Lambda \actson
X \times Y$ by \eqref{eq.action}, with $\Gamma_1 = \Gamma^{*\infty}$ and $\Gamma_2 = \Lambda$.
Define the II$_1$ equivalence relation $\cR$ by restricting
$\cR(\Gamma^{*\infty} * \Lambda \actson X \times Y)$ to a subset $Z$
of finite measure. By Lemma \ref{lemma.reduction}, we can take a free ergodic
p.m.p.\ action $G \actson Z$ whose OE relation equals $\cR$. By (1) of Theorem \ref{thm.compute-out},
$$\cF(\cR) = \module(\Centr_{\Aut Y}(\Lambda)) \; .$$
Put $M := \rL^\infty(Z) \rtimes G$ and note that $M \ovt \B(\ell^2(\N)) \cong \rL^\infty(X \times Y) \rtimes (\Gamma^{*\infty} * \Lambda)$.
By (2a) of Theorem \ref{thm.compute-out}, also $$\cF(M) = \module(\Aut(N)) = \module(\Centr_{\Aut Y}(\Lambda)) \; .$$
If the group morphism \eqref{eq.groupmorphism} splits continuously, it is clear that $N$ admits a strongly continuous trace scaling action. The converse follows from (2b) of Theorem \ref{thm.compute-out}.

In order to conclude the proof of Theorem \ref{thm.Sfactorlarge}, we have to construct an action $\Lambda \actson (Y,\eta)$ such that $\module(\Centr_{\Aut Y}(\Lambda)) = \Rp$, but the morphism \eqref{eq.groupmorphism} does not split continuously.
By Lemma \ref{lemma.two-ergodic}, we can take ergodic measures
$\nu_1,\nu_2$ on $\R$ such that $H_{\nu_1} \neq \R \neq H_{\nu_2}$,
while $H_{\nu_1}+H_{\nu_2} = \R$. By formula (2.2) in \cite{PV1}, we
can take amenable groups $\Lambda_1,\Lambda_2$ and free ergodic infinite
measure preserving actions $\Lambda_i \actson (Y_i,\eta_i)$ such that
$\module(\Centr_{\Aut Y_i}(\Lambda_i)) = \exp(H_{\nu_i})$. Since the
homomorphism $\module$ is continuous, we equip $\exp(H_{\nu_i})$ with
the (Polish) quotient topology. In this way, the $H_{\nu_i}$ become
Polish groups and the embedding $H_{\nu_i} \hookrightarrow \R$ continuous.

We prove that
$$\module : \Centr_{\Aut( Y_1 \times Y_2)}(\Lambda_1 \times \Lambda_2) \recht \R_+$$
admits no continuous splitting. Assume that it does. Since the left-hand
side of the previous formula equals $\Centr_{\Aut Y_1}(\Lambda_1) \times
\Centr_{\Aut Y_2}(\Lambda_2)$, the homomorphism
$$H_{\nu_1} \times H_{\nu_2} \recht \R : (x,y) \mapsto x+y$$
admits a continuous splitting. We then find continuous homomorphisms
$\theta_i : \R \recht H_{\nu_i}$ such that $x = \theta_1(x) + \theta_2(x)$
for all $x \in \R$. Since the embedding $H_{\nu_i} \hookrightarrow \R$ is
continuous, there exist $\lambda_i \in \R$ such that $\theta_i(x) =
\lambda_i x$ for all $x \in \R$. But, $H_{\nu_i} \neq \R$, forcing
$\lambda_i = 0$ for $i=1,2$, a contradiction.
\end{proof}

\begin{proof}[Proof of Theorem \ref{thm.tensor-product}]
By Lemma \ref{lemma.two-ergodic}, we can take ergodic measures
$\nu_1,\nu_2$ on $\R$ such that $H_{\nu_1} \neq \R \neq H_{\nu_2}$,
while $H_{\nu_1}+H_{\nu_2} = \R$. By formula (2.2) in \cite{PV1},
$\exp(H_{\nu_i}) \in \Scentr$, so that by Theorem \ref{thm.Sfactorlarge},
we can take II$_1$ factors $M_i$ with $\cF(M_i) = \exp(H_{\nu_i})$.
Then the fundamental group of $M_1 \ovt M_2$ contains $\exp(H_{\nu_1} + H_{\nu_2})$
and hence equals $\R_+$.
\end{proof}

\section{Groups $G$ for which $\Sfactor(G)$ is trivial} \label{sec.Sfactortrivial}

Combining results from \cite{CH,G1,IPP}, we prove that for the following
groups $G$, $\Sfactor(G)$ is trivial.

\begin{theorem} \label{thm.Sfactortrivial}
Let $\Gamma$ and $\Lambda$ be infinite, finitely generated groups.
Assume that $\Gamma$ is ICC and that one of the following conditions holds.
\begin{enumerate} \renewcommand{\theenumi}{\alph{enumi}}\renewcommand{\labelenumi}{\theenumi)}
\item\label{a} $\Gamma = \Gamma_1 \times \Gamma_2$ is a non-trivial direct product
with $\Gamma_2$ being non-amenable,
\item\label{b} $\Gamma$ admits a non-virtually abelian, normal subgroup $\Gamma_1$
with the relative property (T).
\end{enumerate}
Then $\Sfactor(\Gamma * \Lambda) = \Seqrel(\Gamma*\Lambda) =
\{\{1\}\}$.
\end{theorem}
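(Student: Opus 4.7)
Set $A := \rL^\infty(X)$, $N_1 := A \rtimes \Gamma$, $N_2 := A \rtimes \Lambda$, so that $M := A \rtimes G = N_1 *_A N_2$ is an amalgamated free product, and let $\cR := \cR(G \actson X)$. I would handle the OE case first and then reduce the factor case to it. For $\cR$, the scaling formula $\beta^{(2)}_1(\cR^t) = t^{-1} \beta^{(2)}_1(\cR) = t^{-1} \beta^{(2)}_1(G)$ of \cite{G1} forces $t = 1$ as soon as $0 < \beta^{(2)}_1(G) < \infty$. Finiteness follows from the finite generation of $\Gamma, \Lambda$ via Gaboriau's bound $\beta^{(2)}_1(H) + 1 \le \operatorname{cost}(H) \le d(H)$, and positivity comes from the free-product formula
\[
\beta^{(2)}_1(G) = \beta^{(2)}_1(\Gamma) + \beta^{(2)}_1(\Lambda) + 1 \ge 1,
\]
valid since $\Gamma$ and $\Lambda$ are infinite. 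Hence $\Seqrel(\Gamma*\Lambda) = \{\{1\}\}$.

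For the factor case, given any isomorphism $\theta : M \cong M^t$, it is enough to produce a unitary $u \in M^t$ with $\theta(A) = uA^tu^*$: then $\theta$ descends to an OE $\cR \cong \cR^t$ and the OE case just established forces $t = 1$. To achieve this Cartan preservation I would use the AFP structure $M = N_1 *_A N_2$ together with a rigid subalgebra $P \subset N_1$ built from $\Gamma$. In case (b) take $P := \rL(\Gamma_1)$, rigid in $M$ by the relative property (T). In case (a) use the commuting pair $\rL(\Gamma_1), \rL(\Gamma_2) \subset \rL(\Gamma) \subset N_1$: non-amenability of $\Gamma_2$ yields, by Popa's spectral-gap argument, a rigidity property for $\rL(\Gamma) \subset M$ relative to $\rL(\Gamma_2)$ which plays the role of property (T) in what follows. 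In either situation, \cite[Theorem 5.1]{IPP} applied to $\theta(P) \subset M^t$ yields $\theta(P) \embed{M^t} N_i$ for some $i \in \{1,2\}$.

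The normality $\Gamma_1 \triangleleft \Gamma$ (case b), respectively the spectral-gap commutation (case a), makes $P$ quasi-regular in $N_1$, so \cite[Theorem 1.1]{IPP} upgrades $\theta(P) \embed{M^t} N_i$ to $\theta(N_1) \embed{M^t} N_i$. Repeating the argument with $\theta^{-1}$ in place of $\theta$, and using that the rigidity hypotheses are imposed only on $\Gamma$ and not on $\Lambda$, rules out $i = 2$ and turns the one-sided intertwining into a unitary corner-to-corner conjugacy sending $\theta(N_1)$ onto a corner of $N_1$. Finally, \cite{CH} supplies Cartan uniqueness for $N_1$ under the hypotheses on $\Gamma$; transported through the last conjugacy, this identifies the Cartan $\theta(A)$ of $\theta(N_1)$ with $uA^tu^*$ in $M^t$, yielding the desired Cartan preservation. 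The main obstacle is precisely this transport: moving from the one-sided intertwining supplied by \cite{IPP} to a two-sided unitary conjugacy of $N_1$ with itself (with the correct free-product factor $i = 1$), after which \cite{CH} together with the Gaboriau input $\beta^{(2)}_1(G) \in (0, \infty)$ close the argument.
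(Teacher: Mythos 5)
Your treatment of the equivalence relation is correct and is exactly the paper's: $0<\beta^{(2)}_1(\Gamma*\Lambda)<\infty$ from finite generation and the free product formula, then Gaboriau's scaling formula kills $\cF(\cR)$. The reduction of the factor statement to showing that any isomorphism $\theta: M \recht pMp$ carries $A$ onto a unitary conjugate of $pA$, and the first intertwining step $\theta(\rL(\Gamma_1)) \embed{M} M_i$ via \cite[Theorem 5.1]{IPP} in case (b) and the spectral gap mechanism of \cite[Theorem 4.2]{CH} in case (a), also match the paper.

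The second half of your factor argument has genuine gaps. First, $\rL(\Gamma_1)$ is \emph{not} quasi-regular in $N_1 = A \rtimes \Gamma$: normality of $\Gamma_1$ only makes it quasi-regular in $\rL(\Gamma)$, since the group unitaries quasi-normalize $\rL(\Gamma_1)$ but $A$ does not. So \cite[Theorem 1.1]{IPP} (applied to the normalizer of $\rho(\rL(\Gamma_1))$, which is legitimate because $\Gamma_1$ not virtually abelian gives $\rho(\rL(\Gamma_1)) \notembed{M_i} A$) only upgrades the intertwining to $\theta(\rL(\Gamma)) \embed{M} M_i$, not to $\theta(N_1) \embed{M} N_i$. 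Second, you cannot rule out $i=2$: nothing in the hypotheses prevents $A \rtimes \Lambda$ from containing a copy of $\rL(\Gamma)$ (take $\Lambda \cong \Gamma$), and the duality argument with $\theta^{-1}$ does not produce the corner-to-corner unitary conjugacy you want from two one-sided intertwinings. Third, the step you yourself flag as ``the main obstacle'' is where the actual proof lives, and the tool you propose for it is wrong: there is no Cartan-uniqueness theorem for $N_1 = A\rtimes\Gamma$ under these hypotheses (consider $\Gamma = \Z \times \F_2$ in case (a)). The paper's mechanism is different and works for either value of $i$: since $\Gamma$ is ICC, $M \cap \rL(\Gamma)' = A^\Gamma$, so after the second intertwining pass one can arrange $vv^* \in \theta(A^\Gamma) \subset \theta(A)$, whence $v^*\theta(A)v$ is a Cartan subalgebra of $qM^nq$ that is normalized by the unitaries $\rho(u_g)$, $g \in \Gamma$, all lying in $qM_i^nq$. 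Then \cite[Theorem 1.8]{IPP} --- a statement about Cartan subalgebras of amalgamated free products normalized by sufficiently many unitaries from one leg, not about uniqueness of Cartan subalgebras in that leg --- conjugates $v^*\theta(A)v$ onto a corner of $A^n$. That theorem is the missing ingredient in your outline.
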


\begin{remark} \label{rem.smallvslarge}
Observe that Theorem \ref{thm.Sfactortrivial} implies that, in general,
Theorem \ref{thm.Sfactorlarge} is false if we only take a finite free product.
\end{remark}

\begin{proof}[Proof of Theorem \ref{thm.Sfactortrivial}]
Let $\Gamma * \Lambda \actson (X,\mu)$ be a free ergodic p.m.p.\ action.
Note first that by \cite[Propri\'{e}t\'{e}s 1.5]{G1}, we have
$0 < \beta_1^{(2)}(\Gamma * \Lambda) < \infty$. Hence, by \cite[Corollaire 5.7]{G1},
the fundamental group of the OE relation $\cR(\Gamma * \Lambda \actson X)$
is trivial.

Write $A = \rL^\infty(X)$, $M_1 = A \rtimes \Gamma$, $M_2 = A \rtimes \Lambda$.
Finally, set $M = A \rtimes (\Gamma * \Lambda) = M_1 *_A M_2$. Suppose that $p \in A$
is a projection and $\theta : M \recht pMp$ a $^*$-isomorphism. It remains to prove
that $\theta(A)$ and $pA$ are unitarily conjugate, since this implies that
$\cF(M) = \cF(\cR(\Gamma * \Lambda \actson X))$.

Under assumption \ref{a}), we invoke \cite[Theorem 4.2]{CH} and under
assumption \ref{b}), we invoke \cite[Theorem 5.1]{IPP} and conclude
in both cases that $\theta(L(\Gamma_1)) \embed{M} M_i$ for some $i=1,2$.
Take a projection $q \in M_i^n$, a non-zero partial isometry $v \in p(\M_{1,n}(\C) \ot M)q$
and a unital $^*$-homomorphism $\rho : L(\Gamma_1) \recht q M_i^n q$ satisfying
$\theta(a) v = v \rho(a)$ for all $a \in L(\Gamma_1)$. In both cases \ref{a})
and \ref{b}), the group $\Gamma_1$ is not virtually abelian. Hence,
$\rho(L(\Gamma_1)) \notembed{M_i} A$. By \cite[Theorem 1.1]{IPP},
the normalizer of $\rho(L(\Gamma_1))$ inside $qM^n q$ is contained
in $q M_i^n q$. Since $v^* v$ commutes with $\rho(L(\Gamma_1))$,
we may first of all assume that $q = v^* v$. Next, it follows that
$v^* \theta(L(\Gamma)) v \subset q M_i^n q$. Hence, $\theta(L(\Gamma)) \embed{M} M_i$.

Repeating the previous paragraph, we may assume that $\rho : L(\Gamma) \recht q M_i^n q$,
$\theta(a) v = v \rho(a)$ for all $a \in L(\Gamma)$ and $v^* v = q$. Since $\Gamma$
is an ICC group, we get that
$$M \cap L(\Gamma)' = M_1 \cap L(\Gamma)' = A^\Gamma \; .$$
So, $vv^* \in \theta(A^\Gamma)$. It follows that $v^* \theta(A) v$ is a
Cartan subalgebra of $q M^n q$. Moreover, for all $g \in \Gamma$,
the unitary $v^* \theta(u_g) v = \rho(u_g)$ belongs to $q M_i^n q$
and normalizes $v^* \theta(A) v$. Then, \cite[Theorem 1.8]{IPP}
implies that there exists $w \in q M^n$ such that $ww^* = q$,
$w^* w \in A^n$ and $w^* v^* \theta(A) v w = w^* w A^n$.
It follows that $\theta(A)$ and $pA$ are unitarily conjugate.
\end{proof}

\section{$\Sfactor(\Z^n \rtimes \SL(n,\Z))$ is non-trivial, for all $ n\geq 3$}
\label{sec.Trational}

When $\Gamma$ is an ICC property (T) group, all groups in
$\Sfactor(\Gamma)$ are countable (cf. \cite[Proof of Theorem
1.7]{gg}, or \cite[Theorem 4.5.1]{Pcorr}). Nevertheless,
$\Sfactor(\Gamma)$ can be non-trivial, as shown by the next theorem,
in which we show that if $\Gamma=\Z^n \rtimes \SL(n,\Z)$,
$n\geq 3$,  then $\Sfactor(\Gamma)$ contains ``many'' subgroups of
$\Q_+$. It is unclear though whether there exists a free ergodic
p.m.p.\ action $\Gamma \actson (X,\mu)$ of an ICC property (T) group
$\Gamma$ such that $\cF(\cR(\Gamma \actson X)) \not\subset \Q_+$.

\begin{theorem} \label{thm.rational}
Let $\cF \subset \Q_+$ be a subgroup generated by a subset of the
prime numbers. Let $\Gamma = \Z^n \rtimes \SL(n,\Z)$ with $n \geq
3$. Then $\Gamma$ admits a ``concrete'' free ergodic p.m.p.\ action
$\Gamma \actson (X,\mu)$ such that both the fundamental group of
$\rL^\infty(X) \rtimes \Gamma$ and of $\cR(\Gamma \actson X)$ equal
$\cF$.
\end{theorem}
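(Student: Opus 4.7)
The plan is to construct the required action as a diagonal product $\Gamma \acts X := X_1 \times X_2$, with $X_1$ carrying a $\Gamma$-Bernoulli action (supplying rigidity) and $X_2$ carrying a profinite $\Gamma$-action that encodes the primes in the generating set $P$ of $\cF$. Concretely, let $\hat{\Z}_P := \prod_{p \in P} \Z_p$ and take $X_2 := \hat{\Z}_P^n$, with $\Z^n$ acting by translation through the dense embedding $\Z \hookrightarrow \hat{\Z}_P$ and $\SL(n,\Z)$ acting linearly. By density of $\Z^n$ in $\hat{\Z}_P^n$, this action is free, ergodic and measure-preserving. Viewing $\Gamma$ as a lattice inside the locally compact group $\mathbb{A}_P^n \rtimes \SL(n, \mathbb{A}_P)$, where $\mathbb{A}_P := \prod'_{p \in P} \Q_p$ denotes the restricted product, the commensurator of $\Gamma$ there contains $\Z[1/P]^n \rtimes \SL(n,\Z[1/P])$ together with diagonal matrices in $\GL(n,\Q)$ of arbitrary $P$-rational determinant; for each $p \in P$ I would extract from these an explicit non-singular automorphism of $\cR(\Gamma \acts X_2)$ scaling Haar measure by exactly $p$, giving the inclusion $\cF \subseteq \cF(\cR(\Gamma \acts X))$.

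To force $\cF(M) = \cF(\cR)$ with $M := \rL^\infty(X) \rtimes \Gamma$, and to obtain the converse inclusions, the strategy is to combine two Popa-type rigidity theorems. First, a uniqueness-of-Cartan-subalgebra result for diagonal product actions of property (T) groups with a Bernoulli factor (in the style of \cite[Theorem 1.8]{IPP} or \cite{P5}) would imply that any $^*$-iso\-mor\-phism $\theta : M \recht pMp$ conjugates $\rL^\infty(X)$ into itself up to unitaries; hence $\cF(M) = \cF(\cR(\Gamma \acts X))$. Second, cocycle superrigidity of the Bernoulli factor for property (T) groups forces every OE self-scaling of $\cR(\Gamma \acts X)$ to arise (up to inner symmetries) from an automorphism of $X_2$ that commutes with the $\Gamma$-action. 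A direct computation on $\hat{\Z}_P^n$, using property (T) of $\Gamma$ to exclude continuous one-parameter families of modular values, and using the arithmetic structure of $\hat{\Z}_P$ to confine the possible scalings to the subgroup of $\Q_+$ generated by $P$, would then show that the group of modular values attained is precisely $\cF$.

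The main obstacle I anticipate is the ``$p$ versus $p^n$'' issue on the profinite factor. The naive candidate for a $p$-scaling, namely coordinate-wise multiplication by $p$ on $\hat{\Z}_P^n$, intertwines the $\SL(n,\Z)$-action but scales the Haar measure by $p^n$, so it only yields $p^n \in \cF(\cR)$. To isolate scaling by $p$ itself, one has to replace this by conjugation by the element $\mathrm{diag}(p,1,\dots,1) \in \GL(n,\Q)$ (of determinant $p$), combined with coset representatives of the Iwahori-type subgroup of $\SL(n,\Z_p)$ that stabilises the line $\Z_p e_1 \subset \Z_p^n$; promoting this data to an honest measure-scaling automorphism of $\cR(\Gamma \acts X_2)$, rather than to a mere multi-valued correspondence, is the technical heart of the construction. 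The complementary task of ruling out spurious non-$P$-prime scalings, and of carrying everything through the diagonal product with the Bernoulli factor $X_1$, will rest on the property (T) of $\Gamma$ together with the W$^*$- and cocycle-superrigidity theorems already used throughout the paper.
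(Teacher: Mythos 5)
Your overall architecture --- a diagonal product of a Bernoulli action with a profinite action, uniqueness of the Cartan subalgebra to force $\cF(M)=\cF(\cR)$, and cocycle superrigidity to control stable orbit equivalences --- is exactly the paper's strategy (Theorem \ref{thm.Tdiagonal} together with Propositions \ref{prop.Cartanpreserving} and \ref{prop.oe}). The genuine gap lies in the choice of the profinite factor. With $X_2=\hat{\Z}_P^n$, the stabilizers of the cylinder sets are the subgroups $m\Z^n\rtimes\SL(n,\Z)$, and every finite index subgroup of $\Gamma$ containing such a subgroup is itself of the form $d\Z^n\rtimes\SL(n,\Z)$, of index $d^n$. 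By the formula \eqref{eq.candidate} of Theorem \ref{thm.Tdiagonal}, every element of the resulting fundamental group is a ratio of such indices, hence an $n$-th power of a rational; your construction can therefore produce at most $\{q^n\mid q\in\cF\}$ and never the prime $p$ itself when $n\geq 2$. The obstacle you flag as ``the technical heart'' is in fact fatal for this $X_2$: the element $D=\mathrm{diag}(p,1,\dots,1)$ cannot be promoted to a measure-scaling automorphism of $\cR(\Gamma\actson \hat{\Z}_P^n)$, because $\Gamma\actson\hat{\Z}_P^n$ is not induced from $\Gamma\cap D\Gamma D^{-1}$ acting on a subset --- the $\Gamma$-translates of $p\Z_p\times\Z_p^{n-1}$ are the preimages of the affine hyperplanes of $\F_p^n$ and do not partition the space. (Compare the last paragraph of the paper's proof, where precisely this kind of profinite action is shown to give \emph{trivial} fundamental group.)

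The fix, and the point where the paper's construction differs essentially from yours, is to enlarge the profinite quotient so that it ``sees'' the congruence subgroups $\Gamma\cap g\Gamma g^{-1}$ for all $g$ in the commensurating group $G=R^n\rtimes\GL(n,R)$, where $R$ is the subring of $\Q$ generated by $\cP^{-1}$: one sets $\Gamma_k=\Gamma\cap\bigcap_{i\le k}g_i\Gamma g_i^{-1}$ and $X_2=\invlimit \Gamma/\Gamma_k$. Then $\Gamma\actson X_2$ is genuinely induced from both $\Gamma\cap g\Gamma g^{-1}$ and $g^{-1}\Gamma g\cap\Gamma$ acting on blocks, these two induced actions are conjugate by construction, and the ratio of indices equals $|\det A|$, yielding $\cF\subseteq\cF(\cR)$; essential freeness of this finer action reduces to the case you consider. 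For the reverse inclusion you also need concrete group-theoretic input rather than ``property (T) excluding continuous families'': namely that isomorphic finite index subgroups $\Lambda_1,\Lambda_2$ containing some $\Gamma_k$ satisfy $[\SL(n,\Z):\pi(\Lambda_1)]=[\SL(n,\Z):\pi(\Lambda_2)]$ and that the isomorphism acts on the $\Z^n$-parts by a matrix lying in $\GL(n,R)$, whence the index ratio is $|\det A|^{-1}\in R^*_+$.
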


We prove Theorem \ref{thm.rational} as a consequence of the following more general result.

\begin{theorem}\label{thm.Tdiagonal}
Let $\Gamma$ be a group having a normal, non-virtually abelian subgroup $\Sigma$
with the relative property (T) and with $\Gamma / \Sigma$ being finitely generated.
Let $\Gamma \supset \Gamma_1 \supset \Gamma_2 \supset \cdots$ be a decreasing
sequence of finite index subgroups such that the action $\Gamma \actson
(X,\mu) := \invlimit \Gamma / \Gamma_n$ is essentially free. Consider
the diagonal product action $\Gamma \actson X \times [0,1]^\Gamma$ of
$\Gamma \actson X$ and the Bernoulli action $\Gamma \curvearrowright  [0,1]^\Gamma$.

Then the fundamental groups of the associated II$_1$ factor and
II$_1$ equivalence relation are both equal to
\begin{equation}\label{eq.candidate}
\begin{split}
\Bigl\{ \frac{[\Gamma : \Lambda_1]}{[\Gamma : \Lambda_2]} \; \Big | \; &
\Gamma_{n} \subset \Lambda_1 \cap \Lambda_2 \;\;\text{for large enough
$n$, and}\\ & \Lambda_1 \actson \invlimit \Lambda_1 /
\Gamma_n \;\;\text{is conjugate with}\;\; \Lambda_2 \actson \invlimit \Lambda_2 /
\Gamma_n \; \Bigr\} \; .
\end{split}
\end{equation}
\end{theorem}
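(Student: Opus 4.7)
The plan is to establish the two inclusions separately. For the inclusion showing every element of \eqref{eq.candidate} lies in $\cF(M) \cap \cF(\cR)$, fix $\Lambda_1, \Lambda_2$ as in \eqref{eq.candidate} and choose $n$ so large that $\Gamma_n \subset \Lambda_1 \cap \Lambda_2$. Let $X_{\Lambda_i} := \invlimit \Lambda_i/\Gamma_n$, identified with the (clopen) fiber over the trivial coset of the $\Gamma$-equivariant quotient map $X = \invlimit \Gamma/\Gamma_n \recht \Gamma/\Lambda_i$, a subset of measure $1/[\Gamma:\Lambda_i]$. Let $p_i \in \rL^\infty(X) \subset M$ denote its indicator. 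A routine corner computation gives
\[
p_i M p_i \;\cong\; \rL^\infty\bigl(X_{\Lambda_i} \times [0,1]^\Gamma\bigr) \rtimes \Lambda_i,
\]
where $\Lambda_i$ acts diagonally, its action on $[0,1]^\Gamma$ being the restriction of the Bernoulli $\Gamma$-shift. This restriction is a Bernoulli $\Lambda_i$-action with base $[0,1]^{\Gamma/\Lambda_i}$ and is measurably conjugate, via the Borel isomorphism theorem, to the standard Bernoulli $\Lambda_i$-shift on $[0,1]^{\Lambda_i}$. Using the group isomorphism $\Lambda_1 \to \Lambda_2$ that conjugates the two profinite actions, one then deduces an isomorphism $p_1 M p_1 \cong p_2 M p_2$ of trace ratio $[\Gamma:\Lambda_1]/[\Gamma:\Lambda_2]$. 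The identical construction at the level of orbit equivalence relations exhibits the same ratio in $\cF(\cR)$.

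For the reverse inclusion, the main tool is Popa's cocycle superrigidity theorem. The Bernoulli action $\Gamma \actson [0,1]^\Gamma$ is s-malleable, and this malleability passes to the diagonal action $\Gamma \actson Y := X \times [0,1]^\Gamma$. The hypotheses on $\Sigma$ (normal, non-virtually abelian, with relative property (T), and $\Gamma/\Sigma$ finitely generated) are precisely those needed for the diagonal action to be $\cU_{\mathrm{fin}}$-cocycle superrigid. Combined with the uniqueness-of-Cartan arguments coming from the deformation/rigidity machinery used in the proof of Theorem \ref{thm.Sfactortrivial} (rigidity of $\rL(\Sigma)$ plus malleable deformations of the Bernoulli factor), one obtains uniqueness of the Cartan $\rL^\infty(Y) \subset M$ up to unitary conjugacy, so that $\cF(M) = \cF(\cR)$ and it is enough to describe the latter.

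Cocycle superrigidity forces every stable self-orbit-equivalence of $\Gamma \actson Y$ to be induced by a stable $\Gamma$-conjugacy, namely a measure-space isomorphism between the restrictions of the $\Gamma$-action to two subsets of $Y$. After ergodic decomposition, each such restriction is the action of a finite-index subgroup $\Lambda \subset \Gamma$ (which, one checks, must contain some $\Gamma_n$) on the fiber $X_\Lambda \times [0,1]^\Gamma$. The Bernoulli factor is once more absorbed by the Borel isomorphism theorem, so the induced stable conjugacy descends to a conjugacy of the profinite $\Lambda_i$-actions on $X_{\Lambda_1}$ and $X_{\Lambda_2}$; this yields a pair as in \eqref{eq.candidate} with trace ratio $[\Gamma:\Lambda_1]/[\Gamma:\Lambda_2]$, completing the reverse inclusion.

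The expected main obstacle lies in the last paragraph, namely translating the output of cocycle superrigidity into the explicit group-theoretic data $(\Lambda_1,\Lambda_2)$. This requires carefully separating the profinite and Bernoulli directions, verifying that the Bernoulli factor contributes nothing beyond a canonical isomorphism, and checking that the stabilizer structure of a reduction of the $\Gamma$-action corresponds exactly to a finite-index subgroup $\Lambda$ containing some $\Gamma_n$, with fiber $\invlimit \Lambda/\Gamma_n$. The second delicate point is the uniqueness of the Cartan subalgebra, which requires the deformation/rigidity argument of Theorem \ref{thm.Sfactortrivial}; without this step one only obtains the statement for $\cF(\cR)$ and not for $\cF(M)$.
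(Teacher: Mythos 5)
Your overall architecture (induced actions for the easy inclusion, Cartan uniqueness to equate $\cF(M)$ with $\cF(\cR)$, cocycle superrigidity for the converse) matches the paper's, and the first inclusion is handled essentially as in the paper. But the key step of the reverse inclusion contains a genuine error: you assert that the diagonal action $\Gamma \actson X \times [0,1]^\Gamma$ is $\cU_{\text{fin}}$-cocycle superrigid, i.e.\ that every cocycle with countable target untwists to a homomorphism. This is false, and in fact it \emph{must} be false for the theorem to hold: if every Zimmer cocycle of a stable self-orbit-equivalence were cohomologous to a homomorphism $\Gamma \recht \Gamma$, the standard argument would force the compression constant to be $1$ and hence $\cF(\cR)=\{1\}$, contradicting the nontrivial fundamental groups the theorem produces. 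The profinite factor $X = \invlimit \Gamma/\Gamma_n$ is compact, not weakly mixing, and carries cocycles (e.g.\ $\om(g,x) = s(g\pi(x))^{-1}\, g\, s(\pi(x))$ for a quotient map $\pi : X \recht \Gamma/\Gamma_0$ and a section $s$) that only untwist after passing to a finite quotient. The correct statement is the \emph{virtual} cocycle superrigidity of Definition \ref{def.virtual-cocycle}, and it is obtained by combining Popa's theorem for the Bernoulli direction with Ioana's cocycle superrigidity theorem for profinite actions of property (T) groups; the latter ingredient is entirely absent from your proposal, and Popa's malleability machinery alone cannot supply it since the diagonal action is not weakly mixing.

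Two further points in the translation step need attention. First, once a cocycle only virtually untwists, the resulting structural statement (Proposition \ref{prop.oe}) produces not just finite-index subgroups $G_1,G_2$ and induced pieces, but also a finite normal subgroup $H \lhd G_1$ with the conjugacy holding between $G_1/H \actson Z_1/H$ and $G_2 \actson Z_2$, and the compression constant picks up a factor $|H|^{-1}$. Eliminating $H$ (by choosing $k$ with $H \cap \Gamma_k = \{e\}$, using freeness of $\Gamma \actson X$, and replacing $G_1$ by $\Gamma_k$) is a necessary step you do not address. Second, your claim that "each such restriction is the action of a finite-index subgroup $\Lambda$ containing some $\Gamma_n$ on the fiber $X_\Lambda \times [0,1]^\Gamma$" requires the mixing of the Bernoulli factor (to see that the invariant subsets split as $X_i \times Y$ and that the conjugacy splits along the two coordinates) together with the identification of $\Gamma$-invariant partitions of the profinite factor via \cite[Lemma 4.1]{ioana}; you flag this as the "main obstacle" but do not supply the argument. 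The Cartan-uniqueness step is in the right spirit, though the paper proves it via Proposition \ref{prop.Cartanpreserving} (rigidity of $\rL(\Sigma)$ pushed into $A \rtimes \Gamma$, then a quasi-normalizer argument using normality of $\Sigma$ and the profinite quotient) rather than by the amalgamated free product techniques of Theorem \ref{thm.Sfactortrivial}, which do not apply here.
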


Conjugacy of two profinite actions can be expressed in purely group-theoretic
terms; see e.g.\ \cite[Proposition 1.8]{ioana}.

Before proving Theorem \ref{thm.Tdiagonal}, we introduce some terminology
and an auxiliary result. Recall that a \emph{$1$-cocycle}
$\om : \Gamma \times X \recht \Lambda$ for an action $\Gamma \actson (X,\mu)$
with values in a countable group $\Lambda$ is a measurable map satisfying
$$\om(gh,x) = \om(g,h \cdot x) \, \om(h,x) \quad\text{for all}\;\; g,h \in
\Gamma \;\;\text{and almost all}\;\; x \in X \; .$$
The $1$-cocycles $\om,\om' : \Gamma \times X \recht \Lambda$ are called
\emph{cohomologous} if there exists a measurable map $\vphi : X \recht \Lambda$
satisfying $\om'(g,x) = \vphi(g \cdot x) \om(g,x) \vphi(x)^{-1}$ almost everywhere.
We identify homomorphisms from $\Gamma$ to $\Lambda$ with $1$-cocyles $\om$ that
are independent of the $x$-variable.

\begin{definition}\label{def.virtual-cocycle}
Let $\Gamma \actson (X,\mu)$ be a free ergodic p.m.p.\ action. We say that a
$1$-cocycle $\om : \Gamma \times X \recht \Lambda$ \emph{virtually untwists} if there exists
\begin{itemize}
\item a finite index subgroup $\Gamma_0 < \Gamma$ and a measurable map
$\pi : X \recht \Gamma/\Gamma_0$ satisfying $\pi(g \cdot x) = g \pi(x)$
almost everywhere,
\item a $1$-cocycle $\om' : \Gamma \times \Gamma/\Gamma_0 \recht \Lambda$
for the action $\Gamma \actson \Gamma / \Gamma_0$,
\end{itemize}
such that $\om$ is cohomologous to the $1$-cocycle $(g,x) \mapsto \om'(g,\pi(x))$.

We call $\Gamma \actson (X,\mu)$ \emph{virtually cocycle superrigid}
(with countable target groups) if every $1$-cocycle with values in a
countable group $\Lambda$ virtually untwists.
\end{definition}

A \emph{stable orbit equivalence} between free ergodic p.m.p.\
actions $\Gamma \actson (X,\mu)$ and $\Lambda \actson (X',\mu')$ is
a map $\Delta : X \recht X'$ satisfying the following properties.
\begin{itemize}
\item For almost every $x \in X$, we have $\Delta(\Gamma \cdot x) =
\Lambda \cdot \Delta(x)$.
\item There exists a partition $X = \bigsqcup_n X_n$ of $X$ into
measurable subsets $X_n \subset X$ and there exist measurable
subsets $X'_n \subset X'$ such that for every $n \in \N$, the
restriction of $\Delta$ to $X_n$ is a non-singular isomorphism
between $X_n$ and $X'_n$.
\end{itemize}
By ergodicity, all of these non-singular isomorphisms
$\Delta|_{X_n} : X_n \recht X'_n$ are measure scaling, with the
scaling being independent of $n$. This scaling factor is called the
\emph{compression constant} of the stable OE $\Delta$
and denoted by $c(\Delta)$.

The \emph{Zimmer $1$-cocycle} $\om : \Gamma \times X \recht \Lambda$ associated
with the stable OE $\Delta$ is defined by
$$\Delta(g \cdot x) = \om(g,x) \cdot \Delta(x) \quad\text{almost everywhere}.$$
Two stable OEs $\Delta_1,\Delta_2 : X \recht X'$ are
called \emph{similar} if $\Delta_1(x) \in \Lambda \cdot \Delta_2(x)$
for almost all $x \in X$. Note that similar stable OEs give rise to cohomologous $1$-cocycles.

Whenever $X_0 \subset X$ and $X'_0 \subset X'$ are non-negligible
measurable subsets and $\Delta_0 : X_0 \recht X'_0$ is a
non-singular isomorphism satisfying $\Delta_0(\Gamma \cdot x \cap
X_0) = \Lambda \cdot \Delta_0(x) \cap X'_0$ for almost all $x \in
X_0$, ergodicity allows one to choose a measurable map $p : X \recht
X_0$ with $p(x) \in \Gamma \cdot x$ for almost all $x \in X$ and
then, $\Delta := \Delta_0 \circ p$ defines a stable OE. Another choice of $p$
gives rise to a similar stable
OE. It follows that
$$\cF(\cR(\Gamma \actson X)) = \{ c(\Delta) \mid \Delta \;\;\text{is a stable
OE between}\;\; \Gamma \actson X \;\;\text{and}\;\; \Gamma \actson X \} \; .$$

Let $\Gamma \actson (X,\mu)$. We say that the action $\Gamma \actson X$
is \emph{induced} from $\Gamma_1 \actson X_1$ if $X_1$ is a non-neglible
measurable subset of $X$ and $\Gamma_1 < \Gamma$ is a finite index subgroup
such that $g \cdot X_1 = X_1$ for all $g \in \Gamma_1$ and
$\mu(g \cdot X_1 \cap X_1) = 0$ if $g \in \Gamma - \Gamma_1$.
Obviously, in this situation $\Gamma \actson X$ is stably orbit
equivalent to $\Gamma_1 \actson X_1$ with compression constant $[\Gamma : \Gamma_1]^{-1}$.

The following provides one more instance of a general principle
going back to \cite[Proposition 4.2.11]{Z2}. For other versions
of this, see \cite[Proposition 5.11]{P3} and  \cite[Lemma 4.7]{V1}.

\begin{proposition} \label{prop.oe}
Let $\Delta : X \recht X'$ be a stable OE between the
free ergodic p.m.p.\ actions $\Gamma \actson (X,\mu)$ and $\Lambda
\actson (X',\mu')$. If the associated Zimmer $1$-cocycle virtually
untwists (see Definition \ref{def.virtual-cocycle}), there exist
finite index subgroups $\Gamma_1 < \Gamma$, $\Lambda_1 < \Lambda$,
non-negligible measurable subsets $X_1 \subset X$, $X'_1 \subset X'$
and a finite normal subgroup $H \lhd \Gamma_1$ such that
\begin{enumerate}
\item $\Gamma \actson X$ is induced from $\Gamma_1 \actson X_1$,
\item $\Lambda \actson Y$ is induced from $\Lambda_1 \actson Y_1$,
\item the actions $\Gamma_1 / H \actson X_1/H$ and $\Lambda_1 \actson Y_1$ are conjugate,
\end{enumerate}
and such that the stable OE $\Delta$ is similar to the
composition of the canonical stable OEs given by (1), (3) and (2).
In particular, the compression constant of $\Delta$ equals
$$c(\Delta) = \frac{[\Lambda : \Lambda_1]}{[\Gamma : \Gamma_1] |H|} \; .$$
\end{proposition}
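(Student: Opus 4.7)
The plan is to reduce to a setup where the Zimmer cocycle has a particularly clean form and then read off the subgroups, subsets, and finite normal kernel directly. Since similar stable OEs have cohomologous Zimmer cocycles, I first use the map $\vphi : X \to \Lambda$ witnessing the virtual untwisting to replace $\Delta$ by the similar stable OE $x \mapsto \vphi(x) \cdot \Delta(x)$, so that the Zimmer cocycle becomes exactly $\om(g,x) = \om'(g,\pi(x))$ almost everywhere. I then set $\Gamma_1 := \Gamma_0$ and let $X_1 \subset X$ be the preimage under $\pi$ of the trivial coset in $\Gamma/\Gamma_1$. Equivariance of $\pi$ implies that the translates $g \cdot X_1$ for $g$ running over coset representatives of $\Gamma/\Gamma_1$ partition $X$ up to null sets, so $\Gamma \actson X$ is induced from $\Gamma_1 \actson X_1$, establishing (1).

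On $X_1$ the map $\pi$ is constant, so for $g \in \Gamma_1$ the value $\om'(g,\pi(x))$ reduces to a fixed element $\delta(g) \in \Lambda$, and the cocycle identity for $\om'$ forces $\delta : \Gamma_1 \to \Lambda$ to be a group homomorphism. Set $H := \ker \delta$; it is normal in $\Gamma_1$. The Zimmer relation specialises to $\Delta(g \cdot x) = \delta(g) \cdot \Delta(x)$ for $g \in \Gamma_1$ and $x \in X_1$, so by freeness of $\Lambda \actson X'$ two points of $X_1$ have the same $\Delta$-image if and only if they lie in the same $H$-orbit. Choosing a piece $X_n$ of the partition defining the stable OE with $\mu(X_n \cap X_1) > 0$, the map $\Delta$ is injective on $X_n \cap X_1$, so that set meets each $H$-orbit at most once; since $H$ acts freely on $X_1$, this forces $|H| < \infty$. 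Setting $X'_1 := \Delta(X_1)$ and $\Lambda_1 := \delta(\Gamma_1) \cong \Gamma_1/H$, the map $\Delta$ factors through an equivariant measure-scaling isomorphism $\overline{\Delta} : X_1/H \to X'_1$ that intertwines the actions of $\Gamma_1/H$ and $\Lambda_1$, establishing (3).

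For (2), observe that for each coset representative $g$ of $\Gamma/\Gamma_1$, the Zimmer relation gives $\Delta(g \cdot X_1) = \om'(g,\Gamma_1) \cdot X'_1$, so $X' = \Lambda \cdot \Delta(X) = \Lambda \cdot X'_1$. If $\lambda \in \Lambda$ satisfies $\mu'(\lambda X'_1 \cap X'_1) > 0$, picking $y \in X'_1$ with $\lambda y \in X'_1$ and lifting to $x, x' \in X_1$ with $\Delta(x) = y$, $\Delta(x') = \lambda y$, the points $x, x'$ are $\Gamma$-related and both lie in $X_1$, hence $\Gamma_1$-related, and one reads off $\lambda \in \delta(\Gamma_1) = \Lambda_1$. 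Thus $\Lambda \actson X'$ is induced from $\Lambda_1 \actson X'_1$, and by construction $\Delta$ is similar to the composition of the canonical stable OEs produced by (1), (3) and (2). The compression constant formula follows from $\mu(X_1) = 1/[\Gamma:\Gamma_1]$, $\mu'(X'_1) = 1/[\Lambda:\Lambda_1]$ and the fact that $\Delta|_{X_1}$ is $|H|$-to-one. The step I expect to be the most delicate is identifying the full stabiliser of $X'_1$ in $\Lambda$ as exactly $\delta(\Gamma_1)$: this simultaneously uses freeness of $\Lambda \actson X'$, that $X_1$ is a piece of the $\Gamma$-induction partition (so $\Gamma$-relations internal to $X_1$ are $\Gamma_1$-relations), and that $\Delta$ has been untwisted in advance.
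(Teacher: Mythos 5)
Your proposal follows the paper's own reduction exactly up to the point where the paper stops: you untwist the Zimmer cocycle, set $X_1 = \pi^{-1}(e\Gamma_1)$, observe that $\Gamma \actson X$ is induced from $\Gamma_1 \actson X_1$, and note that on $X_1$ the cocycle becomes a homomorphism $\delta : \Gamma_1 \recht \Lambda$. The paper then simply invokes \cite[Lemma 4.7]{V1}, whereas you prove the content of that lemma from scratch: $H = \ker\delta$, finiteness of $H$ via the pairwise disjointness of the $H$-translates of a piece $X_n \cap X_1$ on which $\Delta$ is injective, the conjugacy $X_1/H \recht X_1'$, and the identification of the essential stabiliser of $X_1'$ in $\Lambda$ with $\delta(\Gamma_1)$. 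This unpacking is correct and has the virtue of making the proof self-contained; what the paper's version buys is brevity. Two small points you should make explicit. First, after replacing $\Delta$ by $x \mapsto \vphi(x)\cdot\Delta(x)$ you must refine the partition $\{X_n\}$ by the (countably many) level sets of $\vphi$ so that the new map is still a local isomorphism onto measurable pieces. Second, the ``only if'' direction of your claim that two points of $X_1$ have the same image exactly when they lie in the same $H$-orbit, and likewise the step in (2) where you assert that $x,x'$ are $\Gamma$-related, both use that $\Delta$ is injective at the level of orbits, i.e.\ that $\Delta(x) \in \Lambda\cdot\Delta(x')$ forces $x \in \Gamma\cdot x'$; the definition of stable OE as written only records the forward implication $\Delta(\Gamma\cdot x) = \Lambda\cdot\Delta(x)$, so you should either note that this orbit-injectivity is part of the intended notion (it holds for every stable OE of the form $\Delta_0 \circ p$ described before the proposition, which is the only way they arise here) or verify it before using it.
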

\begin{proof}
Let $\Delta(g \cdot x) = \om(g,x) \cdot \Delta(x)$ almost everywhere.
By our assumption, take a finite index subgroup $\Gamma_1$, a quotient map
$\pi : X \recht \Gamma/\Gamma_1$ and a $1$-cocycle $\om' :
\Gamma \times \Gamma/\Gamma_1 \recht \Lambda$ such that $\pi(g \cdot x) = g \pi(x)$
almost everywhere and such that $\om$ is cohomologous to the $1$-cocycle
$(g,x) \mapsto \om'(g,\pi(x))$. Define $X_1 = \pi^{-1}(e \Gamma_1)$.
By construction, $\Gamma \actson X$ is induced from $\Gamma_1 \actson X_1$.

Denote by $\Delta_1$ the restriction of $\Delta$ to $X_1$. Then $\Delta_1$
is a stable OE between $\Gamma_1 \actson X_1$ and $\Lambda \actson Y$.
By construction, the $1$-cocycle associated with $\Delta_1$ is cohomologous to a
homomorphism from $\Gamma_1$ to $\Lambda$. The conclusion of the proposition now
follows from \cite[Lemma 4.7]{V1}.
\end{proof}

In order to show the equality of the fundamental groups of the II$_1$ factor
and the II$_1$ equivalence relation associated with the $\Gamma$-actions
defined in Theorem \ref{thm.Tdiagonal}, we need the following result about
automatic preservation of Cartan subalgebras.

\begin{proposition} \label{prop.Cartanpreserving}
Let $\Gamma$ be a countable group having a normal, non-virtually abelian
subgroup $\Sigma$ with the relative property (T). Let $\Gamma \actson (X,\mu)$
be a free ergodic p.m.p.\ action and assume that this action admits a free and
profinite quotient: there exists a free profinite p.m.p.\ action
$\Gamma \actson (X_1,\mu_1)$ and a quotient map $\pi : X \recht X_1$
satisfying $\pi(g \cdot x) = g \cdot \pi(x)$ almost everywhere.

Let $(Y_0,\eta_0)$ be a non-trivial standard probability space and set
$(Y,\eta) = (Y_0,\eta_0)^\Gamma$. Consider the diagonal action
$\Gamma \actson (X \times Y,\mu \times \eta)$. Set $M = \rL^\infty(X \times Y) \rtimes \Gamma$.

Then every isomorphism $\theta : M \recht pMp$ preserves, up to unitary conjugacy,
the natural Cartan subalgebras of $M$, $pMp$.
\end{proposition}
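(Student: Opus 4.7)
The plan is to prove $\theta(A) \embed{M} A$, where $A := \rL^\infty(X \times Y)$. Combined with Popa's Cartan subalgebra preservation theorem (\cite[Theorem A.1]{P5}), this yields a unitary $u \in pMp$ with $u\theta(A)u^* = pA$. I will follow Popa's deformation/rigidity strategy, combining two rigidity ingredients powered by the relative property (T) of $\Sigma$ in $\Gamma$: the s-malleable deformation of the Bernoulli action $\Gamma \actson \rL^\infty(Y)$, and the profinite approximation of $\rL^\infty(X)$.

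For the first step, I would extend $M$ to a dilation $\widetilde M$ supporting the standard s-malleable deformation $(\alpha_t)$ of the Bernoulli shift, extended trivially on the $\rL^\infty(X)$-factor and on $L(\Gamma)$. Since $\Sigma \subset \Gamma$ has the relative property (T), the inclusion $L(\Sigma) \subset M$ is rigid in the sense of Definition \ref{def.rigid}, hence so is $\theta(L(\Sigma)) \subset pMp$. Consequently $\alpha_t \to \id$ uniformly on $\theta(L(\Sigma))$, and Popa's transversality/intertwining argument produces a non-zero partial isometry witnessing
\[
\theta(L(\Sigma)) \embed{M} N, \qquad N := \rL^\infty(X) \rtimes \Gamma \; .
\]

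In the second step, I would exploit the profinite structure $\rL^\infty(X) = \dirlimit \rL^\infty(\Gamma/\Gamma_n)$. Inside $N$, the nested tower of finite-index subfactors $\rL^\infty(\Gamma/\Gamma_n) \rtimes \Gamma \subset N$ can be viewed as the ``fixed-point'' algebras of a second, commuting deformation of $N$, and a further application of the rigidity of $\theta(L(\Sigma))$ refines the intertwining from Step 1 to
\[
\theta(L(\Sigma)) \embed{M} \rL^\infty(\Gamma/\Gamma_n) \rtimes \Gamma
\]
for some $n$. The propagation from $\theta(L(\Sigma))$ to $\theta(A)$ will then use normality: the quasi-normalizer of $\theta(L(\Sigma))$ in $pMp$ contains $\theta(L(\Gamma))$ (since $\Sigma$ is normal in $\Gamma$) as well as the Cartan unitaries of $\theta(A)$ that interact with the crossed-product generators. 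Since $\Sigma$ is non-virtually abelian, no corner of $\theta(L(\Sigma))$ embeds into an abelian subalgebra, so \cite[Theorem 1.1]{IPP} propagates the intertwining to the full quasi-normalizer, and a final extraction yields $\theta(A) \embed{M} A$.

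The main obstacle will be the last step: standard quasi-normalizer control hands us an intertwining of $\theta(L(\Gamma))$, not directly of the Cartan $\theta(A)$. To bridge the gap, I would track the profinite Cartan $\rL^\infty(\Gamma/\Gamma_n) \subset \rL^\infty(\Gamma/\Gamma_n) \rtimes \Gamma$ simultaneously with the $\rL^\infty(Y)$-factor, which reappears once the Bernoulli deformation is ``undone''. The delicate bookkeeping required to combine the two intertwinings into a unified argument, rather than applying them sequentially, is where the core difficulty of the proof will lie.
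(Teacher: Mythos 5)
Your opening move is exactly the paper's: the malleable deformation of the Bernoulli factor, combined with the rigidity of $\theta(L(\Sigma))$ coming from the relative property (T) of $\Sigma \subset \Gamma$, yields $\theta(L(\Sigma)) \embed{M} \rL^\infty(X) \rtimes \Gamma$ (the paper quotes \cite[Theorem 4.1]{P1} and \cite[Lemma 6.1]{V1} for precisely this). The second half of your plan, however, has a genuine gap. The intermediate refinement $\theta(L(\Sigma)) \embed{M} \rL^\infty(\Gamma/\Gamma_n) \rtimes \Gamma$ does no work: $L(\Sigma)$ sits inside $L(\Gamma) \subset \rL^\infty(\Gamma/\Gamma_n)\rtimes\Gamma$ for every $n$, and no amount of relocating $\theta(L(\Sigma))$ tells you anything about where $\theta$ sends the Cartan; the only bridge is a quasi-normalizer argument. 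There your key claim fails: the quasi-normalizer of $\theta(L(\Sigma))$ does \emph{not} contain ``the Cartan unitaries of $\theta(A)$''. The Bernoulli part $\rL^\infty(Y)$ is mixing under $\Sigma$, so no diffuse piece of it quasi-normalizes $L(\Sigma)$, and of $\rL^\infty(X)$ only the profinite quotient $A_1 := \rL^\infty(X_1)$ is known to quasi-normalize, because each finite-dimensional $\Gamma$-invariant subalgebra $\rL^\infty(\Gamma/\Gamma_n)$ is permuted by $\Sigma$ among finitely many pieces. (Also, \cite[Theorem 1.1]{IPP} concerns amalgamated free products and is not the applicable control here; the correct tool is \cite[Proposition D.5]{V1}, exploiting that $M$ is mixing relative to $\rL^\infty(X)\rtimes\Gamma$.)

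The missing idea is that one should not try to intertwine the full Cartan at all. The paper notes that normality of $\Sigma$ together with profiniteness of $\Sigma \actson X_1$ places $A_1 \rtimes \Gamma$ inside the quasi-normalizer of $L(\Sigma)$; since $\Sigma$ is non-virtually abelian, no corner of $L(\Sigma)$ embeds into $\rL^\infty(X)$, so \cite[Proposition D.5]{V1} gives $v^* \theta(A_1 \rtimes \Gamma) v \subset (\rL^\infty(X)\rtimes\Gamma)^n$, hence $\theta(A_1) \embed{M} \rL^\infty(X)\rtimes\Gamma$. A second application of the same control, now using the regularity of $A_1$ in $M$, upgrades this to $\theta(A_1) \embed{M} \rL^\infty(X)$ (otherwise one would get $M \embed{M} \rL^\infty(X)\rtimes\Gamma$, which is absurd). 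Finally --- and this is where the hypothesis your proposal never invokes enters --- freeness of $\Gamma \actson X_1$ gives $A_1' \cap M = \rL^\infty(X \times Y)$, so \cite[Theorem A.1]{P5}, in the form recalled in the Preliminaries, converts $\theta(A_1) \embed{M} \rL^\infty(X \times Y)$ into unitary conjugacy of the two Cartan subalgebras. Without the passage through the small algebra $A_1$ and the freeness of the profinite quotient, the argument as you have sketched it cannot close.
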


\begin{proof}
Set $A = \rL^\infty(X)$ and $B = \rL^\infty(Y)$. Let $\theta : M \recht pMp$
be an isomorphism. Denote $A_1 = \rL^\infty(X_1)$ and view $A_1$ as a globally
$\Gamma$-invariant von Neumann subalgebra of $A$.

Almost literally repeating \cite[Theorem 4.1]{P1} (see also \cite[Lemma 6.1]{V1}),
we find that $\theta(L(\Sigma)) \embed{M} A \rtimes \Gamma$. Take a projection
$q \in (A \rtimes \Gamma)^n$, a non-zero partial isometry $v \in p (\M_{1,n}(\C) \ot M)q$
and a unital $^*$-homomorphism $\al : L(\Sigma) \recht q(A \rtimes \Gamma)^n q$ such
that $\theta(a) v = v \al(a)$ for all $a \in L(\Sigma)$.

Since $\Sigma$ is normal in $\Gamma$ and since $\Sigma \actson X_1$ is profinite,
the quasi-normalizer of $L(\Sigma)$ inside $M$ contains $A_1 \rtimes \Gamma$.
Since $\Sigma$ is non-virtually abelian, $L(\Sigma)$ cannot be embedded in an
amplification of $A$. So, by \cite[Proposition D.5]{V1},
$$v^* \theta(A_1 \rtimes \Gamma) v \subset (A \rtimes \Gamma)^n \; .$$
It follows in particular that $\theta(A_1) \embed{M} A \rtimes \Gamma$.
We claim that in fact $\theta(A_1) \embed{M} A$. Indeed, if this were not the case, applying once more \cite[Proposition D.5]{V1}
(and using the regularity of $A_1 \subset (A \ovt B) \rtimes \Gamma$)
would yield $M \embed{M} A \rtimes \Gamma$, a contradiction. This proves the claim.

Since $\Gamma \actson X_1$ is free, we have $A_1' \cap M = A \ovt B$.
Hence, the proposition follows from \cite[Theorem A.1]{P5}.
\end{proof}

We are now ready to prove Theorem \ref{thm.Tdiagonal}.

\begin{proof}[Proof of Theorem \ref{thm.Tdiagonal}]
Put $(X,\mu) = \invlimit \Gamma/\Gamma_n$ as in the formulation of the theorem.
We assume $\Gamma \actson X$ to be essentially free.
Let $Y = [0,1]^\Gamma$ and denote by $\eta$ the infinite product of the Lebesgue
measure on $[0,1]$. We consider the diagonal action $\Gamma \actson X \times Y$.

By Proposition \ref{prop.Cartanpreserving}, we have
$$\cF(\rL^\infty(X \times Y) \rtimes \Gamma) = \cF(\cR(\Gamma \actson X \times Y)) \; .$$
Whenever $\Lambda_1 < \Gamma$ is a subgroup containing $\Gamma_n$ for large enough $n$,
the action $\Gamma \actson (X,\mu)$ is induced from the action $\Lambda_1 \actson X_1 :=
\invlimit \Lambda_1 / \Gamma_n$, and hence $\Gamma \actson X \times Y$ is induced from
$\Lambda_1 \actson X_1 \times Y$. Since $\Lambda_1 \acts [0,1]^\Gamma$ and
$\Lambda_1 \acts [0,1]^{\Lambda_1}$ are isomorphic actions, it follows that
the set defined by \eqref{eq.candidate} is part of the fundamental group
$\cF(\cR(\Gamma \actson X \times Y))$.

A combination of \cite[Theorem 0.1]{P3} and \cite[Theorem B]{ioana} yields
that the diagonal action $\Gamma \actson X \times Y$ is virtually cocycle
superrigid in the sense of Definition \ref{def.virtual-cocycle}. So, we can
apply Proposition \ref{prop.oe}.

Let $\Delta : X \times Y \recht X \times Y$ be a stable OE
between $\Gamma \actson X \times Y$ and itself. We have to prove that
$c(\Delta)$ belongs to the set defined in \eqref{eq.candidate}.
Proposition \ref{prop.oe} provides us with finite index subgroups
$G_1,G_2$ of $\Gamma$, non-negligible measurable subsets $Z_1,Z_2 \subset X \times Y$
and a finite normal subgroup $H$ of $G_1$ such that $\Gamma \actson X \times Y$
is induced from $G_i \actson Z_i$ and such that $G_1/H \actson Z_1/H$ is conjugate
to $G_2 \actson Z_2$, say through the isomorphism $\Delta : Z_1/H \recht Z_2$ and
the group isomorphism $\delta : G_1 \recht G_2$.
Finally, $c(\Delta) = \frac{[\Gamma : G_2]}{[\Gamma : G_1] |H|}$.

Since the Bernoulli action $G_i \actson Y$ is mixing, we have
$Z_i = X_i \times Y$, with $\Gamma \actson X$ being induced from
$G_i \actson X_i$. Moreover, still because the Bernoulli action
is mixing, $\Delta(x,y) = (\Delta_0(x),\ldots)$, where $\Delta_0 : X_1/H \recht X_2$
is an isomorphism conjugating the actions $G_1/H \actson X_1/H$ and $G_2 \actson X_2$
through the group isomorphism $\delta : G_1/H \recht G_2$.

Denote by $\pi_n : X \recht \Gamma/\Gamma_n$ the natural quotient map.
By \cite[Lemma 4.1]{ioana}, we find $k \in \N$ and $g \in \Gamma$ such
that $g \Gamma_k g^{-1} \subset G_1$ and $X_1 = \pi_k^{-1}(G_1 g \Gamma_k)$.
Moreover, since $\Gamma \actson X$ is free, we can take $k$ large enough and
assume that $H \cap \Gamma_k = \{e\}$. Replacing $G_1$ by $g^{-1} G_1 g$ and
$X_1$ by $g^{-1} \cdot X_1$, we may assume that $g = e$. Note that
$G_1/H \actson X_1/H = \invlimit G_1 / (\Gamma_n H)$ is induced from
$(\Gamma_k H)/H \actson \invlimit (\Gamma_k H) / (\Gamma_n H)$ and that the latter is
conjugate to $\Gamma_k \actson \invlimit \Gamma_k/\Gamma_n$, because $\Gamma_k \cap H = \{e\}$.

It follows that $\Gamma \actson X$ is induced from $\delta((\Gamma_k H)/H)
\actson \Delta_0(\invlimit (\Gamma_k H) / (\Gamma_n H))$. Applying as above
\cite[Lemma 4.11]{ioana}, we find $h \in \Gamma$ such that, after replacing
$\delta$ by $g \mapsto h \delta(g) h^{-1}$ and $\Delta_0$ by $x \mapsto h \cdot \Delta_0(x)$,
we have $\Gamma_n \subset \Lambda_2 := \delta((\Gamma_k H)/H)$ for $n$ large enough and
$$\Delta_0(\invlimit (\Gamma_k H) / (\Gamma_n H)) = \invlimit \Lambda_2 / \Gamma_n \; .$$
Denoting $\Lambda_1 = \Gamma_k$, we have constructed finite index subgroups
$\Lambda_1,\Lambda_2 \subset \Gamma$ such that $\Gamma_n \subset \Lambda_1 \cap \Lambda_2$
for $n$ large enough and such that the actions $\Lambda_i \actson \invlimit \Lambda_i / \Gamma_n$
are conjugate for $i=1,2$. Tracing back the construction, we also have
$$c(\Delta) = \frac{[\Gamma : \Lambda_2]}{[\Gamma : \Lambda_1]}$$
concluding the proof of the theorem.
\end{proof}

\begin{proof}[Proof of Theorem \ref{thm.rational}]
Let $\cF$ be a subgroup of $\Q_+$ generated by a non-empty subset $\cP$ of the prime numbers.
The case $\cF = \{1\}$ will be discussed at the end of the proof. Denote by $R$ the subring
of $\Q$ generated by $\cP^{-1}$. Note that $R^* = \cF \cup (- \cF)$. Set $G = R^n \rtimes
\GL(n,R)$ and $\Gamma = \Z^n \rtimes \SL(n,\Z)$. Let $G = \{g_1,g_2,\dots\}$ and define
the finite index subgroups $\Gamma_k < \Gamma$ as $\Gamma_k = \Gamma \cap \bigcap_{i=1}^k g_i
\Gamma g_i^{-1}$. Define the profinite action $\Gamma \actson (X,\mu) := \invlimit \Gamma /
\Gamma_k$.

We first argue why $\Gamma \actson (X,\mu)$ is essentially free. Let $p \in \cP$ and take
$k_1 < k_2 < k_3 < \cdots$ such that $(0,p^l 1) \in \{g_1,\ldots,g_{k_l}\}$. One checks
that $\Gamma_{k_l} \subset G_l := p^l \Z^n \rtimes \SL(n,\Z)$, and hence it suffices to
prove freeness of $\Gamma \actson \invlimit \Gamma / G_l$. The latter has been shown in
\cite[discussion before Corollary 5.8]{ioana}.

Consider the diagonal action $\Gamma \actson X \times [0,1]^\Gamma$. Denote by $\cF$ the
set defined in \eqref{eq.candidate}.
By Theorem \ref{thm.Tdiagonal}, we have to prove that $\cF = R^*_+$. It is more convenient
to write $X = \invlimit \Gamma/\Gamma_F$, where $F$ runs through the finite subsets of $G$
and $\Gamma_F := \Gamma \cap \bigcap_{g \in F} g \Gamma g^{-1}$. Whenever $g \in G$, the
action $\Gamma \actson X$ is induced from
$$\Gamma \cap g \Gamma g^{-1} \actson \invlimit_{g \in F \subset G} \frac{\Gamma \cap g
\Gamma g^{-1}}{\Gamma_F}$$
and is induced from
$$g^{-1} \Gamma g \cap \Gamma \actson \invlimit_{g^{-1} \in F \subset G} \frac{g^{-1} \Gamma
g \cap \Gamma}{\Gamma_F} \; .$$
The two actions are conjugate by construction. If $g = (x,A)$, one checks that
$$\frac{[\Gamma : \Gamma \cap g \Gamma g^{-1}]}{[\Gamma : g^{-1} \Gamma g \cap \Gamma]}
= |\det A| \; .$$
It follows that $R^*_+ \subset \cF$.

Conversely, we claim that whenever $\Lambda_1 , \Lambda_2 < \Gamma$ are isomorphic finite
index subgroups of $\Gamma$ such that $\Gamma_k \subset \Lambda_1 \cap \Lambda_2$ for some
$k$, then $[\Gamma : \Lambda_1]/[\Gamma : \Lambda_2] \in R^*_+$. Once this claim is proven,
we get the required equality $\cF = R^*_+$. Let $\delta : \Lambda_1 \recht \Lambda_2$ be an
isomorphism and $\Gamma_k \subset \Lambda_1 \cap \Lambda_2$. We have
$\delta(\Lambda_1 \cap \Z^n) = \Lambda_2 \cap \Z^n$. An elementary argument
for this fact can be given by repeating the beginning of the proof of
\cite[Proposition 7.1]{PV2}. Denoting by $\pi : \Gamma \recht \SL(n,\Z)$
the quotient map, $\pi(\Lambda_1)$ and $\pi(\Lambda_2)$ are isomorphic
finite index subgroups of $\SL(n,\Z)$. Using \cite[Lemma 5.2]{ioana}, it
follows that $[\SL(n,\Z) : \pi(\Lambda_1)] = [\SL(n,\Z) : \pi(\Lambda_2)]$.
Hence, we get
$$\frac{[\Gamma : \Lambda_1]}{[\Gamma : \Lambda_2]} = \frac{[\SL(n,\Z) :
\pi(\Lambda_1)] \; [\Z^n : \Z^n \cap \Lambda_1]}{[\SL(n,\Z) : \pi(\Lambda_2)]
\; [\Z^n : \Z^n \cap \Lambda_2]} = \frac{[\Z^n : \Z^n \cap \Lambda_1]}
{[\Z^n : \Z^n \cap \Lambda_2]} \; .$$
Being finite index subgroups of $\Z^n$, we have $\Lambda_i \cap \Z^n = B_i \Z^n$
for some $B_i \in \M_n(\Z)$ with $\det B_i \neq 0$, $i=1,2$. It follows that
there exists $A \in \GL(n,\Q)$ such that $\delta(x,1) = (A x, 1)$ for all
$(x,1) \in \Lambda_1 \cap \Z^n$. Hence,
$$\frac{[\Z^n : \Z^n \cap \Lambda_1]}{[\Z^n : \Z^n \cap \Lambda_2]} =
\frac{[\Z^n : \Z^n \cap A^{-1} \Z^n]}{[\Z^n : \Z^n \cap A \Z^n]} \; .$$
Since
$$\Z^n \cap \Gamma_k \subset \Z^n \cap \Lambda_1 \cap \Lambda_2 \subset \Z^n \cap A \Z^n
\cap A^{-1} \Z^n \; ,$$ we find $\al \in R^* \cap (\N - \{0\})$ such that
$\al \Z^n \subset \Z^n \cap A \Z^n \cap A^{-1} \Z^n$ for $i=1,2$.
We conclude that $A \in \GL(n,R)$ and finally,
$$\frac{[\Gamma : \Lambda_1]}{[\Gamma : \Lambda_2]} = |\det A|^{-1} \in R^*_+ \; .$$

To conclude the proof of the theorem, we need to construct a free
ergodic p.m.p.\ action $\Gamma \actson (X,\mu)$ such that the
associated II$_1$ factor has trivial fundamental group. By
\cite[Corollary 0.2]{P2}, the Bernoulli action $\Gamma \acts
[0,1]^\Gamma$ has this property. Other examples can be given as
follows. Let $p_1,p_2,\ldots$ be an
enumeration of the prime numbers
and set $\Gamma_k = p_1 \cdots p_k \Z^n \rtimes \SL(n,\Z)$. By
Theorem \ref{thm.Tdiagonal} and \cite[Corollary 5.8]{ioana}, the
diagonal product of the Bernoulli action $\Gamma \acts [0,1]^\Gamma$
and the profinite action $\Gamma \actson \invlimit \Gamma/\Gamma_k$,
provides a crossed product II$_1$ factor with trivial fundamental
group.
\end{proof}

\section{Property (T) and countability of the fundamental group} \label{sec.countable}

In his celebrated ``rigidity paper'' \cite{connes}, Connes showed
that II$_1$ factors arising from ICC groups with the property (T) of
Kazhdan have countable fundamental group. Using the same ideas, it
was later shown that, for a separable II$_1$ factor $M$ to have
countable $\cF(M)$, it is in fact sufficient that $M$ contains a
subfactor with the property (T) in the sense of \cite{CJ2} and
having trivial relative commutant, $N'\cap M=\C$ (cf.\ Theorem 4.6.1
in \cite{Pcorr}; see also \cite{NPS} for a more general statement).
It was also shown that if $M$ is a separable II$_1$ factor, then the
family of subfactors $N_i\subset M, i\in I$, having property (T) and
trivial relative commutant, is countable modulo conjugacy by
unitaries in $M$ (cf.\ Theorem 4.5.1 in \cite{Pcorr}; see also
\cite{ozawa} for a related result). In this section, we prove some
analogous  results for II$_1$ equivalence relations.

In particular, these results show that given any Kazhdan group
$\Gamma$, $\Seqrel(\Gamma)$ can only contain countable subgroups of
$\R_+$, and if in addition $\Gamma$ is ICC then the same holds true
for $\Sfactor(\Gamma)$. More generally, Part 1 of Theorem
\ref{thm.TMcDuff} below shows that this is still the case if the
center of $\Gamma$ ``virtually coincides'' with its {\it FC radical}
(as defined before \ref{thm.TMcDuff}). However, if one drops this
assumption on $\Gamma$, then the situation becomes quite
complicated. Thus, Part 2 of Theorem \ref{thm.TMcDuff} shows that if
a property (T) group $\Gamma$ is residually finite and has
non-virtually abelian FC radical, then $\Gamma$ admits free ergodic
p.m.p.\ actions $\Gamma \actson (X,\mu)$ such that $\rL^\infty(X)
\rtimes \Gamma$ is McDuff and hence its fundamental group is equal
to $\R_+$.

We first need some notation. Thus, if $\cR$ is a II$_1$
equivalence relation on the standard probability space $(X,\mu)$,
then we denote by $[\cR]$ the \emph{full group} of the equivalence
relation $\cR$, consisting of all non-singular isomorphisms $\phi :
X \recht X$ satisfying $(x,\phi(x)) \in \cR$ for almost all $x \in
X$. The \emph{full pseudogroup} of $\cR$ is denoted by $[[\cR]]$ and
consists of all non-singular partial automorphisms $\phi$ between
measurable subsets $D(\phi), R(\phi) \subset X$, satisfying
$(x,\phi(x)) \in \cR$ for almost all $x \in D(\phi)$. Note that,
since $\cR$ is II$_1$, every $\phi \in [[\cR]]$ is measure
preserving. If $\Gamma \subset [[\cR]]$ is a subgroup, we denote by
$s(\Gamma)\subset X$ its {\it support}, i.e. the subset $Y\subset X$
with the property that $R(g)=D(g)=Y$, $\forall g\in \Gamma$. Two
such subgroups $\Gamma, \Lambda \subset [[\cR]]$ are {\it conjugate}
by an element in $[[\cR]]$ if there exists $\phi\in [[\cR]]$ such
that $D(\phi)=s(\Gamma)$, $R(\phi)=s(\Lambda)$ and $\phi \Gamma =
\Lambda \phi$.

\begin{theorem}\label{thm.countableT}
Let $\cR$ be a II$_1$ equivalence relation on the probability space
$(X,\mu)$.
\begin{enumerate}
\item If $[\cR]$ contains a property (T) group $\Gamma$ implementing an
ergodic action on $(X,\mu)$, then $\cF(\cR)$ is countable. More
generally, if $[\cR]$ contains a countable group $\Gamma$ having a
subgroup $H\subset \Gamma$ with the relative property (T)
implementing an ergodic action on $(X,\mu)$, then $\cF(\cR)$ is
countable.
\item
Let $\mathcal T$ be the set of property (T) subgroups $\Gamma
\subset [[\cR]]$ acting ergodically on $s(\Gamma)$. Then $\mathcal
T$ is countable, modulo conjugacy by elements in $[[\cR]]$.
\end{enumerate}
\end{theorem}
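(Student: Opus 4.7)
The plan is to adapt to the equivalence-relation setting the Kazhdan-constant arguments used in \cite{connes} and \cite[Theorem 4.6.1]{Pcorr} to establish countability of the fundamental group for property (T) factors, using intertwining by bimodules (as in the proofs of Theorem~\ref{thm.compute-out} and Proposition~\ref{prop.Cartanpreserving} above) to keep track of the Cartan subalgebra. Set $M = \rL(\cR)$ with Cartan $A = \rL^\infty(X)$, and consider the II$_\infty$ amplification $\tilde M = M \ovt \B(\ell^2(\N))$ with Cartan $\tilde A = A \ovt \ell^\infty(\N)$. Every $t \in \cF(\cR)$ is realized as the module $\module(\sigma_t)$ of a Cartan-preserving trace-scaling automorphism $\sigma_t \in \Aut(\tilde M)$. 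Writing $u_g \in \mathcal{N}_M(A)$ for the normalizing unitary associated with $g \in \Gamma \subset [\cR]$, the formula $\pi_t(g) := \sigma_t(u_g)$ defines, for each such $t$, a unitary representation $\pi_t : \Gamma \to \mathcal{N}_{\tilde M}(\tilde A) \subset \cU(\tilde M)$.

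For part (1), I would use the relative property (T) of $H \subset \Gamma$ to pick a finite set $F \subset \Gamma$ and $\eps > 0$ such that any two unitary representations $\pi, \pi' : \Gamma \to \cU(K)$ on any Hilbert space $K$ with $\| \pi(g) \xi - \pi'(g) \xi \| < \eps$ for $g \in F$ and some unit vector $\xi$ admit a non-zero intertwiner $T \in \B(K)$ with $\pi(h) T = T \pi'(h)$ for all $h \in H$. Applied to $\pi = \pi_t$, $\pi' = \pi_{t'}$ on $K = \rL^2(\tilde M)$ at $\xi = \hat 1$, separability of the Polish space $\cU(\tilde M)^F$ shows that at most countably many $\pi_t$ can be pairwise $\eps$-separated on $F$ at $\hat 1$. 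Consequently there is a countable $\cF_0 \subset \cF(\cR)$ such that for every $t \in \cF(\cR)$ one obtains a non-zero partial isometry $v \in \tilde M$ intertwining $\pi_t|_H$ with $\pi_{t_0}|_H$ for some $t_0 \in \cF_0$. Setting $\alpha = \sigma_{t_0}^{-1} \circ \sigma_t$ and $w = \sigma_{t_0}^{-1}(v)$ gives $\alpha(u_h)w = w u_h$ for all $h \in H$; ergodicity of $H$ on $X$, together with the fact that $\alpha(u_h)$ and $u_h$ normalize $\tilde A$, allows one to apply intertwining by bimodules \cite[Theorem A.1]{P5} to promote $w$ to a Cartan-preserving partial conjugacy, reading off the scaling $t/t_0 = \module(\alpha)$ as a ratio of traces of $H$-invariant projections in $\tilde A$. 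This ratio is forced to lie in a fixed countable set, so $\cF(\cR)$ is countable.

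Part (2) is then obtained by running the same Kazhdan-constant/separability scheme simultaneously over the whole family $\mathcal{T}$: each $\Gamma \in \mathcal{T}$ provides canonical unitaries $u_g^\Gamma \in \mathcal{N}_{\tilde M}(\tilde A)$, and property (T) of $\Gamma$ with separability of $\cU(\tilde M)^F$ leaves at most countably many such configurations pairwise $\eps$-separated on a Kazhdan generating set; every other $\Gamma \in \mathcal{T}$ then admits a non-zero intertwiner $v \in \tilde M$ to one of the chosen configurations. Ergodicity of the action $\Gamma \curvearrowright s(\Gamma)$, combined with intertwining by bimodules, forces $v$ to come from a partial automorphism in $[[\cR]]$ conjugating the two subgroups (up to a finite modification on a complementary piece), giving countability of $\mathcal{T}$ modulo $[[\cR]]$-conjugation.

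The main technical obstacle in both parts is the passage from an abstract intertwiner $v \in \tilde M$ to a Cartan-compatible partial conjugacy --- in (1) to a Cartan-preserving partial automorphism of $\tilde M$ realizing the scaling $t/t_0$, and in (2) to an element of $[[\cR]]$. This is precisely where the ergodicity of the acting subgroup on its support is used, via Popa's intertwining-by-bimodules criterion \cite[Theorem A.1]{P5} combined with the uniqueness (up to unitary conjugacy inside the relevant reductions) of the Cartan subalgebras normalized by the representation.
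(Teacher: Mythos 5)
Your overall strategy (a Kazhdan pair plus separability of a Polish space to produce two uniformly close copies, then an intertwiner, then ergodicity to control supports) is indeed the paper's strategy, but the step on which everything hinges does not work as you describe it. From the relation $\alpha(u_h)w = wu_h$ for $h\in H$ you only learn that $w^*w$ and $\alpha^{-1}(ww^*)$ lie in $\rL(H)'\cap p\tilde M p$, and ergodicity of $H\actson X$ gives no control over this relative commutant: it is in general far larger than $1\ovt\ell^\infty(\N)$ (if $\cR$ comes from an action of $H\times\Lambda$ it already contains $\rL(\Lambda)$), so the projections $w^*w$, $\alpha^{-1}(ww^*)$ need not lie in $\tilde A$ and their traces are not forced into a countable set. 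Nor can \cite[Theorem A.1]{P5} repair this: that theorem upgrades an intertwining $A_0\embed{M}B$ of a subalgebra $A_0$ of one Cartan (with $A_0'\cap M=A$) into another Cartan, whereas your $w$ intertwines copies of $\rL(H)$ and carries no a priori relation to $\tilde A$; the fact that the $u_h$ normalize $\tilde A$ places no constraint on $w$. (Also, $\hat 1$ is not a vector of $\rL^2(\tilde M)$ since $\tilde M$ is II$_\infty$.) The paper circumvents all of this by never passing to the factor: the averaging is performed inside the abelian algebra $\rL^\infty(\cR)$ of functions on the graph of $\cR$, using the representation of $\Gamma$ on $\rL^2(\cR\cap Y_s\times Y_t,\mu^{(1)})$ with the indicator of the diagonal as almost invariant vector. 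The invariant element produced by property (T) is then the indicator of an invariant subset $W\subset\cR$ close to the diagonal, and a pointwise extraction (keeping only those points of $W$ whose two fibers are singletons) yields the graph of a genuine $\phi\in[[\cR]]$ intertwining the two actions; this is Lemma \ref{lemma.conjugate-full}, the technical heart of the proof, of which your proposal contains no analogue. Since elements of $[[\cR]]$ are automatically measure preserving and ergodicity forces $D(\phi)=Y_s$ and $R(\phi)=Y_t$, one concludes $s=t$ immediately, with no need to read off a scaling from traces.

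For Part 2 there is a second gap: $\mathcal T$ consists of uncountably many \emph{different} property (T) groups, each with its own Kazhdan set and constant, so there is no single finite $F$ and $\eps$ on which to run a separability argument in $\cU(\tilde M)^F$ (or in $[[\cR]]^F$). The paper first invokes Shalom's theorem that every property (T) group is a quotient of a finitely presented property (T) group, which reduces to the case where all the $\Gamma_i$ are quotients $\alpha_i(\Gamma)$ of one fixed property (T) group $\Gamma$, and it also normalizes the measures of the supports $s(\Gamma_i)$ into a fixed interval $(3/4,1)$; only after these reductions does the separability argument apply, and the conclusion is again reached via Lemma \ref{lemma.conjugate-full}. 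Without these reductions your argument for Part 2 does not get started.
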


Note that the ergodicity assumption of the action of $\Gamma$ on
$(X,\mu)$ in Part 1 of the above statement is crucial. Indeed,
Theorem \ref{thm.Sfactorlarge} provides examples of free ergodic
p.m.p.\ actions $G \actson (X,\mu)$ such that $\cR(G \actson
X)$ has uncountable fundamental group, but nevertheless $G$ contains
a subgroup having property (T) (which, a fortiori, acts
non-ergodically on $(X,\mu)$). In turn, the existence of a property
(T) subgroup of $[\cR]$ acting freely and ergodically  on $X$, does
not ensure that the II$_1$ factor $L(\cR)$ has countable fundamental
group. Indeed, by \cite{CJ} there exist free ergodic p.m.p.\ actions
of groups of the form $G=\Gamma \times \Sigma$, with $\Gamma$ having
property (T) and acting by Bernoulli shifts (thus ergodically), such
that $M=\rL^\infty(X)\rtimes G$ splits off the hyperfinite II$_1$
factor, and thus $\cF(M)=\R_+$. In fact, as pointed out in
\cite{P3}, more than being countable, the fundamental group of
$\cR(G \actson X)$ is trivial.

Note also that in the case $\cR$ comes from a free ergodic action of
a property (T) group, $\Gamma \actson (X,\mu)$, Part 1 of Theorem
\ref{thm.countableT} was already shown in \cite[Corollary 1.8]{gg},
in the case $\Gamma$ is ICC, and in \cite[Theorem 5.9]{ioana}, in
the general case. We will use the above result in
\cite{PV-proc-Zimmer} to prove that the II$_1$ equivalence $\cR$
obtained by restricting the II$_\infty$ equivalence relation
implemented by $\SL(n,\Z) \curvearrowright \R^n$, $n \geq 4$,
to a subset of measure $1$ has property (T) in the sense of Zimmer,
yet cannot be implemented by an action (even non-free) of a property
(T) group because $\cF(\cR)=\R_+$.

We will prove Theorem \ref{thm.countableT} by contradiction, using the
property (T) of the subgroups and a ``separability'' argument, in
the spirit of \cite{Pcorr}. For more on this strategy of proofs,
which grew out of Connes' rigidity paper \cite{connes}, we send the
interested reader to Section 4 in \cite{Picm}. As a result of this
argument, we obtain two copies $\Gamma_1, \Gamma_2\subset [[\cR]]$
of the same property (T) group, which are uniformly close one to the
other. This in turn gives rise to a non-zero intertwiner $\phi \in
[[\cR]]$ between $\Gamma_1, \Gamma_2$. But if the $\Gamma_i$-actions
are assumed ergodic, this forces $\mu(s(\Gamma_1))=\mu(s(\Gamma_2))$
and the conjugacy of $\Gamma_1, \Gamma_2.$

The existence of an intertwiner between uniformly close subgroups in
$[[\cR]]$ is the subject of the next lemma. Recall that $[[\cR]]$
has a natural metric space structure, inherited from the
Hilbert-norm $\|\cdot \|_2$ of the underlying II$_1$ factor $L(\cR)$
associated with $\cR$, by viewing every $\phi \in [[\cR]]$ as a
partial isometry in $L(\cR)$ and using the $\|\cdot \|_2$-norm on
the latter. The metric can be concretely written as
$$d(\phi,\psi)^2 = \mu\bigl(D(\phi) \bigtriangleup\ D(\psi)\bigr) + 2
\mu\bigl(\{x \in D(\phi) \cap D(\psi) \mid \phi(x) \neq
\psi(x)\}\bigr) \; ,$$ where $\bigtriangleup$ denotes the symmetric
difference of two sets. We will also need the natural $\si$-finite
measure $\mu^{(1)}$ on $\cR \subset X \times X$, defined by the
formula
$$\mu^{(1)}(\cU) = \int_X \# \{y \mid (x,y) \in \cU \} \; d\mu(x) = \int_X
\#\{x \mid (x,y) \in \cU\} \; d\mu(y)$$ for all measurable subsets
$\cU \subset \cR$.

\begin{lemma} \label{lemma.conjugate-full}
Let $\cR$ be a II$_1$ equivalence relation on the standard probability space
$(X,\mu)$. Suppose that $\Gamma$ is a countable group, $X_0,Y_0 \subset X$ and let
$$\al : \Gamma \recht [\cR|_{X_0}] \quad\text{and}\quad \beta : \Gamma \recht [\cR|_{Y_0}]$$
be group morphisms satisfying $d(\al_g,\be_g) \leq 1/5$ for all $g \in
\Gamma$ and $\mu(X_0),\mu(Y_0) \geq 3/4$. Then there exist non-negligible measurable subsets $X_1
\subset X_0$, $Y_1 \subset Y_0$ and $\phi \in [[\cR]]$ with $D(\phi) =
X_1$, $R(\phi) = Y_1$ such that
\begin{align*}
& X_1 \;\;\text{is globally $(\al_g)_{g \in \Gamma}$-invariant}\;\; , \;\;
Y_1 \;\;\text{is globally $(\beta_g)_{g \in \Gamma}$-invariant, and}
\\ & \phi(\al_g(x)) = \beta_g(\phi(x)) \;\;\text{for almost all}\;\; x \in D(\phi) \; .
\end{align*}
\end{lemma}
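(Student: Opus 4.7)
My plan is to use the classical ``almost-invariant implies invariant'' trick in the Hilbert space $L^2(\cR,\mu^{(1)}) \cong L^2(L(\cR))$, followed by extraction of a partial isomorphism in $[[\cR]]$ from the resulting $\Gamma$-invariant vector.

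First, I would set up the unitary representation: for $\phi \in [[\cR]]$ let $u_\phi \in L(\cR)$ denote the associated partial isometry (the characteristic function of the graph of $\phi$ in the kernel picture). Since $\al_g,\be_g \in [\cR]$ preserve $\cR$-equivalence classes, the formula $g \cdot \xi := u_{\al_g}\, \xi\, u_{\be_g}^*$ defines a unitary representation of $\Gamma$ on the corner $p_{X_0} L^2(L(\cR)) p_{Y_0}$. The natural test vector $\xi_0$ is the characteristic function of the partial diagonal $\Delta_0 := \{(x,x) : x \in X_0 \cap Y_0\} \subset \cR$. A direct computation identifies $g \cdot \xi_0$ with the characteristic function of the graph $\{(\al_g(s),\be_g(s)) : s \in X_0 \cap Y_0\}$ and yields
$$
\|g \cdot \xi_0 - \xi_0\|_2^2 \;=\; 2\mu(\{s \in X_0 \cap Y_0 : \al_g(s) \neq \be_g(s)\}) \;\leq\; d(\al_g,\be_g)^2 \;\leq\; 1/25,
$$
while $\|\xi_0\|_2^2 = \mu(X_0 \cap Y_0) \geq 1/2$ thanks to $\mu(X_0),\mu(Y_0) \geq 3/4$.

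Next, I would let $\xi_\infty$ be the unique element of minimal norm in the closed convex hull $K := \overline{\convex}\{g \cdot \xi_0 : g \in \Gamma\}$. Uniqueness of the minimizer and $\Gamma$-invariance of $K$ force $\xi_\infty$ to be $\Gamma$-invariant, and the triangle inequality applied to a sequence of convex combinations representing $\xi_\infty$ yields $\|\xi_\infty - \xi_0\|_2 \leq 1/5$, hence $\|\xi_\infty\|_2 \geq \sqrt{1/2} - 1/5 > 0$, so in particular $\xi_\infty \neq 0$. Moreover $\xi_\infty \geq 0$ as a kernel on $\cR$, and since each $g \cdot \xi_0$ is a sub-bistochastic kernel (``rows'' and ``columns'' sum to at most $1$), the same is true of $\xi_\infty$; in particular $\xi_\infty$ is a bounded element of $p_{X_0} L(\cR) p_{Y_0}$ of operator norm at most $1$.

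Finally, I would extract $\phi \in [[\cR]]$ from $\xi_\infty$ via the polar decomposition $\xi_\infty = vh$ in $L(\cR)$. Uniqueness of polar decomposition together with the invariance $u_{\al_g} \xi_\infty u_{\be_g}^* = \xi_\infty$ force the partial isometry $v$ to satisfy $u_{\al_g} v u_{\be_g}^* = v$ for every $g \in \Gamma$. Writing $v = u_\phi$ for a partial isomorphism $\phi$ with $D(\phi) = Y_1 \subset Y_0$ and $R(\phi) = X_1 \subset X_0$, this operator identity translates verbatim into $\al_g(X_1) = X_1$, $\be_g(Y_1) = Y_1$, and $\al_g \circ \phi = \phi \circ \be_g$; passing from $\phi$ to $\phi^{-1} : X_1 \to Y_1$ then yields the statement of the lemma. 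The hard part will be ensuring that the polar-decomposition partial isometry $v$ really does lie in the normalizer of $\rL^\infty(X)$ inside $L(\cR)$, so that it indeed corresponds to an element of $[[\cR]]$; this does not hold for an arbitrary partial isometry in $L(\cR)$, and one must exploit either the structure of $\xi_\infty$ as an $L^2$-limit of Cartan-normalizing partial isometries of operator norm $\leq 1$, or, alternatively, replace the polar decomposition step by a measurable selection inside a well-chosen $\Gamma$-invariant level set $\{\xi_\infty \geq t\} \subset \cR$.
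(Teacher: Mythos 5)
Your setup coincides exactly with the paper's: the Hilbert space $L^2(\cR\cap X_0\times Y_0,\mu^{(1)})$, the representation $g\cdot\xi=u_{\al_g}\xi u_{\be_g}^*$ (in the paper written as automorphisms $\rho_g$ of $\rL^\infty(\cR)p$), the test vector $\chi_{\Delta_0}$, the estimate $\|g\cdot\xi_0-\xi_0\|_2\le 1/5$, and the circumcenter $\xi_\infty$ of the convex hull, which is $\Gamma$-invariant, non-zero, pointwise in $[0,1]$ and doubly substochastic. Up to this point the argument is correct and is the paper's argument.

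The gap is in the extraction step, and it is exactly the one you flag yourself: the polar decomposition of $\xi_\infty$ inside $L(\cR)$ does \emph{not} in general produce a partial isometry normalizing the Cartan subalgebra. A doubly substochastic nonnegative kernel supported on $\cR$ can have a polar part that is far from the graph of a partial isomorphism (e.g.\ the kernel constantly equal to $1/2$ on a two-point class is already its own polar part and is not in $[[\cR]]$), and being an $L^2$-limit of convex combinations of elements of $[[\cR]]$ does not help, since such limits exhaust the whole unit ball of positive kernels. Your fallback suggestion of a ``measurable selection inside a level set'' is also not enough as stated: an arbitrary selection inside $\{\xi_\infty\ge t\}$ would destroy the $\Gamma$-equivariance you need. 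What the paper actually does is: take the level set $W=\{a\ge 1/2\}$ (a spectral projection in the \emph{abelian} algebra $\rL^\infty(\cR)$, hence automatically $\Gamma$-invariant and satisfying $\mu^{(1)}(W\,\triangle\,\Delta_0)\le 4/25$), and then pass to the subset $W_0\subset W$ of points $(x,y)$ whose horizontal and vertical slices in $W$ are singletons. This $W_0$ is canonically defined, hence still $\Gamma$-invariant, it is tautologically the graph of an element of $[[\cR]]$, and the quantitative closeness of $W$ to the diagonal (together with $\mu(X_0),\mu(Y_0)\ge 3/4$) shows $\mu^{(1)}(W_0)\ge 1/10>0$. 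This singleton-slice device, plus the measure estimate making it non-trivial, is the missing idea; without it your proof does not close.
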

\begin{proof}
Denote by $\Tr$ the normal faithful semi-finite trace on $\rL^\infty(\cR)$
given by integration along $\mu^{(1)}$. Let $p \in \rL^\infty(\cR)$ be the
projection onto $\cR \cap X_0 \times Y_0$ and $e \leq p$ the projection onto $\{(z,z)
\mid z \in X_0 \cap Y_0\}$. Set $B = \rL^\infty(\cR) p$. The group $\Gamma$ acts by
automorphisms $\rho_g$ of $B$ given by
$$(\rho_g F)(x,y) = F(\al_{g^{-1}}(x) , \be_{g^{-1}}(y)) \quad\text{for
almost all}\;\; (x,y) \in \cR \cap X \times Y \; .$$
Since $\|\rho_g(e) - e \|_{2,\Tr}^2 = 2 \mu\bigl( \{z \in X_0 \cap Y_0 \mid \al_{g^{-1}}(z)
\neq \be_{g^{-1}}(z)\}\bigr)$, we get
$$\|\rho_g(e) - e \|_{2,\Tr} \leq \frac{1}{5}\quad\text{for all}\;\; g \in \Gamma \; .$$
Define $a \in B^+$ as the unique element of minimal $\|\cdot\|_{2,\Tr}$
in the weakly closed convex hull $\overline{\operatorname{conv}}\{\rho_g(e)
\mid g \in \Gamma \}$. It follows that $\|a - e\|_{2,\Tr} \leq 1/5$ and that
$\rho_g(a) = a$ for all $g \in \Gamma$. Note that $0 \leq a \leq 1$. Defining
$f$ as the spectral projection $f=\chi_{[1/2,1]}(a)$, we find that $\|f-e\|_{2,\Tr}
\leq 2/5$ and $\rho_g(f) = f$ for all $g \in \Gamma$. We write $f = \chi_W$, where
$W \subset \cR \cap X_0 \times Y_0$ is globally $(\rho_g)_{g \in \Gamma}$-invariant and satisfies
\begin{equation}\label{eq.inequality}
\mu^{(1)}\bigl( W \; \bigtriangleup\ \{(z,z) \mid z \in X_0 \cap Y_0 \}\bigr) \leq \frac{4}{25} \; .
\end{equation}
Denote $_x W := \{y \in Y_0 \mid (x,y) \in W\}$ and $W_y := \{x \in X_0 \mid (x,y) \in W\}$. Define
$$W_0 := \{(x,y) \in W \mid \; _x W \;\;\text{and}\;\; W_y \;\;\text{are singletons}\;\} \; .$$
Then $W_0$ is still globally $(\rho_g)_{g \in \Gamma}$-invariant. Since $\mu(X_0),\mu(Y_0) \geq 3/4$ and $\mu(X_0 \bigtriangleup\ Y_0) \leq 1/25$,
we have $\mu(X_0 \cap Y_0) \geq 3/4 - 1/25$.
By \eqref{eq.inequality}, the set of $x \in X_0$
such that $_x W$ is a singleton then has measure at least $3/4 - 1/25 - 4/25$. The same holds for
the set of $y \in Y_0$ such that $W_y$ is a singleton. So, $W_0$ has measure at least $1/10$. By construction, $W_0$ is the graph of a partial automorphism
$\phi \in [[\cR]]$ satisfying all the conclusions of the lemma.
\end{proof}

\begin{proof}[Proof of Theorem \ref{thm.countableT}] Let us first
prove Part 1 of the theorem. By the relative property (T) of
$H\subset \Gamma$, there exist $F \subset \Gamma$ and $0 < \eps <
1/4$ such that whenever $\pi : \Gamma \recht \cU(\mathcal H)$ is a
unitary representation of $\Gamma$ on a Hilbert space $\mathcal H$
and $\xi_0 \in \mathcal H$ a unit vector satisfying $\|\pi(g) \xi_0
- \xi_0\| \leq \eps$ for all $g \in F$, then $\|\pi(h) \xi_0 -
\xi_0\| \leq 1/8$ for all $h \in H$.

Choose for every $t \in (0,1)$ a measurable subset $Y_t \subset X$
with $\mu(Y_t) = t$ and such that $Y_s \subset Y_t$ if $s \leq t$.
Assume that the fundamental group of $\cR$ is uncountable. For every
$t \in \cF(\cR) \cap (3/4,1)$, choose an isomorphism $\Delta_t : X
\recht Y_t$ between $\cR$ and $\cR|_{Y_t}$. Note that $\Delta_t$
scales the measure $\mu$ by $t$. Define $\al^t_g = \Delta_t \circ
\al_g \circ \Delta_t^{-1}$. Note that $\al^t_g \in [[\cR]]$ with
$D(\al^t_g) = R(\al^t_g) = Y_t$. Since $\cF(\cR) \cap (3/4,1)$ is
uncountable, separability of the metric space $([[\cR]],d)$ yields
$s,t \in \cF(\cR) \cap (3/4,1)$ with $s < t$ and $d(\al^s_g,\al^t_g)
\leq \eps/2$ for all $g \in F$.

Define the Hilbert space $\mathcal H = \rL^2(\cR \cap Y_s \times
Y_t,\mu^{(1)})$ and the unitary representation
$$\pi : \Gamma \recht \cU(\mathcal H) : (\pi(g)\xi)(x,y) = \xi(\al^s_{g^{-1}}(x),
\al^t_{g^{-1}}(y)) \; .$$
Set $\Delta_s := \{(y,y) \mid y \in Y_s\}$
and $\xi_0 := s^{-1/2} \chi_{\Delta_s}$. Then $\xi_0$ is a unit
vector in $\mathcal H$ and, for all $g \in F$,
$$\|\pi(g)\xi_0 - \xi_0\|^2 = 2s^{-1} \mu\bigl(\{y \in Y_s \mid \al^s_{g^{-1}}(y) \neq
\al^t_{g^{-1}}(y)\}\bigr) \leq s^{-1} d(\al^s_g,\al^t_g)^2 \leq \eps^2 \;
.$$ It follows that $\|\pi(h)\xi_0 - \xi_0\| \leq 1/8$ for all $h
\in H$. So, for all $h \in H$, we have
$$2 \mu\bigl(\{y \in Y_s \mid \al^s_h(y) \neq \al^t_h(y)\}\bigr) \leq
\frac{s}{64} \leq \frac{1}{64} \; .$$ Since also, given $g \in F$,
$$\mu(Y_t \setminus Y_s) \leq d(\al^s_g,\al^t_g)^2 \leq \frac{\eps^2}{4} \leq \frac{1}{64} \; ,$$
it follows that
$$d(\al^s_h,\al^t_h)^2 \leq \mu(Y_t \setminus Y_s) + \frac{1}{64} < \frac{1}{25}$$ for all
$h \in H$. Since $(\al_h)_{h \in H}$ implements an ergodic action on $(X,\mu)$,
the same holds for $(\al^s_h)_{h \in H}$, $(\al^t_h)_{h\in H}$ and
so, Lemma \ref{lemma.conjugate-full} provides an element $\phi \in
[[\cR]]$ with $D(\phi) = Y_s$ and $R(\phi) = Y_t$. Since $\phi$ is a
measure preserving isomorphism between $Y_s$ and $Y_t$ and
$\mu(Y_s)=s < t=\mu(Y_t)$, we reached a contradiction.

To prove Part 2 of Theorem \ref{thm.countableT}, assume by
contradiction that there exist uncountably many subgroups
$\{\Gamma_i\mid i\in I\}$ in $[[\cR]]$ which have property (T) and are non-conjugate in
$[[\cR]]$. We continue to use the measurable subsets $Y_t \subset X$ with $\mu(Y_t) = t$ and $Y_s \subset Y_t$ whenever $s \leq t$.
By the ergodicity of $\cR$, we may assume that for every $i \in I$, the support of $\Gamma_i$ is one of the $Y_s$.

By Shalom's theorem \cite[Theorem 6.7]{shalom}, every property (T) group is the quotient of a finitely presented property (T) group. Since there are only countably many finitely presented groups, we may assume that all $\Gamma_i$'s are quotients of the same property (T) group $\Gamma$ through surjective homomorphisms $\alpha_i : \Gamma \recht \Gamma_i$. Finally, we may assume that there exists $t \in (0,1)$ such that $\mu(s(\Gamma_i)) \in (3t/4,t)$ for all $i \in I$. So, replacing $\cR$ by $\cR|_{Y_t}$, we may assume that $\mu(s(\Gamma_i)) \in (3/4,1)$ for all $i \in I$.

By the property (T) of $\Gamma$, there exist $F \subset \Gamma$ and
$0 < \eps < 1/4$ such that whenever $\pi : \Gamma \recht
\cU(\mathcal H)$ is a unitary representation of $\Gamma$ on a
Hilbert space $\mathcal H$ and $\xi_0 \in \mathcal H$ a unit vector
satisfying $\|\pi(g) \xi_0 - \xi_0\| \leq \eps$ for all $g \in F$,
then $\|\pi(g) \xi_0 - \xi_0\| \leq 1/8 $ for all $g\in \Gamma$.

Now, by the separability of $([[\cR]], d)$, there exist $i\neq j$
such that $d(\alpha_i(g),\alpha_j(g))$ $\leq \varepsilon/2$, $\forall
g\in F$. Let $Y_i\subset X$, $Y_j\subset X$ be the support of
$\Gamma_i$ resp. $\Gamma_j$ and assume $Y_i\subset Y_j$. We define
$\mathcal H$, $\xi_0\in \mathcal H$, $\pi:\Gamma \rightarrow
\mathcal U(\mathcal H)$ as before, but replacing $\al^s_g$ by
$\al_i(g)$, $\al^t_g$ by $\al_j(g)$, $Y_s$ by $Y_i$ and $Y_t$ by
$Y_j$. The same estimates then show that $\xi_0$ is a unit vector
satisfying $\|\pi_g(\xi_0)-\xi_0\|\leq \eps$, $\forall g\in F$.
Thus, $\|\pi(g) \xi_0 - \xi_0\| \leq 1/8 $ for all $g\in \Gamma$. As
before, this translates into $d(\al_i(g),\al_j(g))\leq 1/4$,
$\forall g\in \Gamma$. By Lemma \ref{lemma.conjugate-full}, this
implies $\Gamma_i$, $\Gamma_j$ are conjugate by an element in
$[[\cR]]$, contradicting our initial assumption.
\end{proof}

Part 2 of Theorem \ref{thm.countableT} readily implies that the
functor $\Gamma \mapsto \cR_\Gamma$, from free ergodic p.m.p.\ actions
of property (T) groups with morphisms given by conjugacy, to
the associated equivalence relations with morphisms given by orbital
isomorphism, is ``countable to one''. In other words, there are at
most countably many non-conjugate free ergodic p.m.p.\ actions in
each OE class of a free ergodic p.m.p.\ action of a
property (T) group. In fact, even more is true: any free ergodic
p.m.p.\ action of a property (T) group follows ``orbit equivalent
superrigid, modulo countable classes'', in a sense made precise
below.

\begin{corollary}\label{superrig} Let $\Gamma \curvearrowright X$ be
a free ergodic p.m.p.\ action of a property (T) group. Let $\Lambda_i
\curvearrowright X_i$, $i\in I$, be a family of free ergodic p.m.p.\
actions such that $\cR_\Gamma \simeq \cR_{\Lambda_i}^{t_i}$, for some
$t_i > 0$. Then the family $I$ is countable, modulo conjugacy of
actions.
\end{corollary}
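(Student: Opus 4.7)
The plan is to embed all the actions $\Lambda_i \curvearrowright X_i$ simultaneously inside a single ambient equivalence relation, in such a way that the given stable orbit equivalences $\cR_\Gamma \simeq \cR_{\Lambda_i}^{t_i}$ produce copies of the property (T) group $\Gamma$ inside its full pseudogroup; Part 2 of Theorem \ref{thm.countableT} can then be applied to count these copies. Concretely, let $\widetilde\cR := \cR_\Gamma^\infty$ denote the $\operatorname{II}_\infty$ amplification of $\cR_\Gamma$, acting on the $\sigma$-finite measure space $(\widetilde X, \widetilde\mu)$. Rewriting $\cR_\Gamma \simeq \cR_{\Lambda_i}^{t_i}$ as $\cR_{\Lambda_i} \simeq \cR_\Gamma^{1/t_i}$, I fix for each $i$ an isomorphism $\iota_i : \cR_{\Lambda_i} \to \widetilde\cR|_{\widetilde X_i}$ with $\widetilde\mu(\widetilde X_i) = 1/t_i$, through which $\Lambda_i$ becomes a subgroup of $[[\widetilde\cR]]$ generating $\widetilde\cR|_{\widetilde X_i}$ freely. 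Composing $\iota_i$ with the given isomorphism $\cR_{\Lambda_i}^{t_i} \simeq \cR_\Gamma$ yields an identification of $\cR_\Gamma$ with $\widetilde\cR|_{Y_i}$ for some $Y_i \subset \widetilde X_i$ of $\widetilde\mu$-measure $1$, and thus a subgroup $\Gamma_i \subset [[\widetilde\cR]]$ isomorphic to $\Gamma$, supported on $s(\Gamma_i) = Y_i$ and acting ergodically there.

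By the ergodicity of $\widetilde\cR$, I may precompose each $\iota_i$ with a suitable element of $[[\widetilde\cR]]$ (which does not affect the conjugacy class of the resulting embedded $\Lambda_i$-action) so as to arrange that all $Y_i$ equal a fixed subset $Y_0 \subset \widetilde X$ of measure $1$. Setting $\cR_0 := \widetilde\cR|_{Y_0}$, this is a $\operatorname{II}_1$ equivalence relation and each $\Gamma_i$ is a property (T) subgroup of $[[\cR_0]]$ acting ergodically on $Y_0$. Part 2 of Theorem \ref{thm.countableT} applied to $\cR_0$ then forces $\{\Gamma_i\}_{i \in I}$ to decompose into at most countably many $[[\cR_0]]$-conjugacy classes; extending each $\phi \in [[\cR_0]]$ to an element of $[[\widetilde\cR]]$ by the identity off $Y_0$, the $\Gamma_i$ also decompose into countably many $[[\widetilde\cR]]$-conjugacy classes.

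It remains to bridge from conjugacy of the property (T) subgroups $\Gamma_i$ to conjugacy of the $\Lambda_i$-actions themselves. The point is that two actions $\Lambda_i \curvearrowright X_i$ and $\Lambda_j \curvearrowright X_j$ are conjugate if and only if the images of $\Lambda_i, \Lambda_j$ under $\iota_i, \iota_j$ are conjugate by an element of $[[\widetilde\cR]]$, and such a conjugation automatically sends $\Gamma_i$ to $\Gamma_j$. Hence one obtains a well-defined map from conjugacy classes of the $\Lambda_i$-actions to $[[\widetilde\cR]]$-conjugacy classes of the $\Gamma_i$. I expect the hardest part will be verifying that each fibre of this map is countable; this should follow from a further separability argument in the spirit of Part 2 of Theorem \ref{thm.countableT}, applied inside the normalizer of a fixed representative $\Gamma_0 \subset [[\widetilde\cR]]$ in order to enumerate the possible extensions of $Y_0$ to $\widetilde X_i$ together with the corresponding free overlying actions of $\Lambda_i$.
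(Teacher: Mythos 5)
There is a genuine gap, and it sits exactly where you place the ``hardest part''. The subgroups $\Gamma_i \subset [[\widetilde\cR]]$ you construct are, by construction, all conjugate to one another: each $\Gamma_i \actson Y_i$ is conjugate to the fixed action $\Gamma \actson X$ via the identification $\cR_\Gamma \simeq \widetilde\cR|_{Y_i}$, and after your normalization all supports equal $Y_0$. So applying Part 2 of Theorem \ref{thm.countableT} to the family $\{\Gamma_i\}$ yields no information whatsoever --- there is a single conjugacy class, hence a single fibre of your map, and that fibre contains all the $\Lambda_i$-actions. The statement ``each fibre of this map is countable'' is therefore not a technical verification to be deferred; it \emph{is} the corollary, verbatim, and your proposal proves nothing beyond restating it. (A smaller inaccuracy: the ``only if'' direction of your claimed equivalence fails --- a conjugacy between the abstract actions $\Lambda_i \actson X_i$ and $\Lambda_j \actson X_j$ need not be implemented by an element of $[[\widetilde\cR]]$, since it has no reason to preserve $\widetilde\cR$-classes. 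Only the ``if'' direction is true, which is the one a correct argument would use.)

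The missing idea is to apply Theorem \ref{thm.countableT}(2) to the groups $\Lambda_i$ themselves rather than to copies of $\Gamma$. Since each $\Lambda_i$ is measure equivalent to the property (T) group $\Gamma$ (they admit stably orbit equivalent free p.m.p.\ actions), Furman's theorem \cite[Corollary 1.4]{Fu} gives that every $\Lambda_i$ has property (T). Partitioning $I$ into the countably many subsets on which $t_i \geq 1/c$ for $c \in \N$, one may assume the $t_i$ are bounded below, so that setting $\cR := (\cR_\Gamma)^c$ each $\cR_{\Lambda_i} \simeq \cR_\Gamma^{1/t_i}$ embeds as a restriction of $\cR$; this realizes every $\Lambda_i$ as a property (T) subgroup of $[[\cR]]$ acting ergodically on its support, with the action on the support conjugate to $\Lambda_i \actson X_i$. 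Part 2 of Theorem \ref{thm.countableT} then gives countably many $[[\cR]]$-conjugacy classes among the $\Lambda_i$, and $[[\cR]]$-conjugacy of these subgroups does imply conjugacy of the corresponding actions. Without the input from \cite{Fu}, your route cannot be completed by the tools of this paper.
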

\begin{proof}
We may assume that $t_i \geq 1/c$ for all $i \in I$ and some $c > 0$. Setting $\cR = (\cR_\Gamma)^c$, we can view all $\Lambda_i$ as subgroups of $[[\cR]]$, with the action of $\Lambda_i$ on $s(\Lambda_i) \subset X$ being conjugate to $\Lambda_i \actson X_i$. By \cite[Corollary 1.4]{Fu}, all $\Lambda_i$ have property (T).
So, by Part 2 of Theorem \ref{thm.countableT}, the family $I$ is countable modulo conjugacy of actions.
\end{proof}

When $\Gamma$ is an ICC property (T) group, all groups in
$\Sfactor(\Gamma)$ are countable (cf.\ \cite[Proof of Theorem
1.7]{gg}, or \cite[Theorem 4.5.1]{Pcorr}). The next theorem
generalizes this result to Kazhdan groups $\Gamma$ with the property
that the center $\cZ(\Gamma)$ has finite index in the
\emph{FC-radical} $\Gamma_f$ of $\Gamma$, defined by
$$\Gamma_f := \{g \in \Gamma \mid \; g \;\;\text{has a finite conjugacy class}\;\} \; .$$
On the other hand, we prove in the second part of the
theorem below that if $\Gamma$ is a residually finite property (T)
group such that $\Gamma_f$ is not virtually abelian (i.e.,
$\cZ(\Gamma_f) < \Gamma_f$ has infinite index), then $\Gamma$ admits
a free ergodic p.m.p.\ action on $(X,\mu)$ with $\rL^\infty(X)
\rtimes \Gamma$ being McDuff and hence, $\R_+ \in \Sfactor(\Gamma)$.

At the time of finishing a first version of this article, the only known examples of Kazhdan groups $\Gamma$ with
infinite FC-radical $\Gamma_f$ were such
that $\cZ(\Gamma)$ has finite index in $\Gamma_f$ (see e.g.\
\cite[Example 1.7.13]{BHV} and \cite[Definition 2.4]{cornulier}).
While we were unable to show whether or not there exist residually finite Kazhdan groups $\Gamma$ with non virtually abelian FC-radical $\Gamma_f$, after discussing this problem with several
specialists, it was indicated to us by Mark Sapir and Denis Osin
that such groups probably do exist. Very recently, this was confirmed by Mikhail Ershov \cite{Er2} who showed that every Golod-Shafarevich group has a residually finite quotient whose FC-radical is not virtually abelian. Since he proved in \cite{Er1} that there exist Golod-Shafarevich groups with property (T) and since property (T) passes to quotients, there indeed exist free ergodic p.m.p.\ actions $\Gamma \actson (X,\mu)$ of property (T) groups such that $\rL^\infty(X) \rtimes \Gamma$ is McDuff.

\begin{theorem}\label{thm.TMcDuff}
Let $\Gamma$ be a property (T) group.
\begin{enumerate}
\item If $\cZ(\Gamma)$ has finite index in the FC-radical $\Gamma_f$, then $\Sfactor(\Gamma)$ only contains countable groups.
\item If $\Gamma$ is residually finite and $[\Gamma_f,\cZ(\Gamma_f)] = \infty$, then $\Gamma$ admits a free ergodic profinite p.m.p.\ action on $(X,\mu)$ such that $\rL^\infty(X) \rtimes \Gamma$ is McDuff.
\end{enumerate}
\end{theorem}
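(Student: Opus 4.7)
For Part 1, I plan to adapt the classical separability argument of \cite{connes,Pcorr,gg}. Suppose by contradiction that $\cF(M)$ is uncountable for some free ergodic p.m.p.\ action $\Gamma \actson (X,\mu)$ with $M = \rL^\infty(X) \rtimes \Gamma$, and pass to $\tilde M := M \ovt \B(\ell^2(\N))$ with trace-scaling automorphisms $\tilde\theta_t$ for $t \in \cF(M)$. Fix a Kazhdan pair $(F,\eps)$ for $\Gamma$. Separability of (a finite-trace truncation of) the unit ball of $\tilde M$ in the $\|\cdot\|_{2,\Tr}$-norm together with uncountability of $\cF(M)$ yields, by pigeonhole, distinct $s\neq t$ with $\|\tilde\theta_s(u_g)-\tilde\theta_t(u_g)\|_{2,\Tr}<\eps/3$ for all $g\in F$. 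Property (T) then produces a unitary $u \in \tilde M$ such that $(\Ad u)\circ\tilde\theta_s$ is close to $\tilde\theta_t$ on $L(\Gamma)$ in $\|\cdot\|_2$. The hypothesis $[\Gamma_f:\cZ(\Gamma)]<\infty$ is used precisely to promote this approximate intertwining to an exact one: the center $\cZ(L(\Gamma))$ is a finitely generated $L(\cZ(\Gamma))$-module (of rank equal to the number of $\Gamma$-conjugacy classes in the finite group $\Gamma_f/\cZ(\Gamma)$), so the residual ambiguity is finite-dimensional and can be resolved by a spectral argument. The resulting equality $\Ad u\circ\tilde\theta_s=\tilde\theta_t$ on $L(\Gamma)$ contradicts that $\tilde\theta_s,\tilde\theta_t$ scale the trace by different factors.

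For Part 2, the plan is to construct a free ergodic profinite action of $\Gamma$ whose crossed product $M$ has non-abelian central sequence algebra $M'\cap M^\omega$, so that $M$ is McDuff by McDuff's theorem and $\cF(M)=\Rp$. By residual finiteness, fix a decreasing chain of normal finite-index subgroups $\Gamma_n\lhd\Gamma$ with $\bigcap_n\Gamma_n=\{e\}$, and take the free ergodic profinite action $\Gamma\actson X:=\invlimit_n\Gamma/\Gamma_n$. Elements of $\Gamma_f\cap\Gamma_n$ act trivially on $\rL^\infty(\Gamma/\Gamma_n)$, so unitaries from $L(\Gamma_f\cap\Gamma_n)$ asymptotically commute with $\rL^\infty(X)$ as $n\to\infty$. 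Since $\Gamma_f\cap\Gamma_n$ has finite index in the non-virtually-abelian FC-group $\Gamma_f$, it is itself non-virtually-abelian, and by Neumann's theorem $[\Gamma_f,\Gamma_f]$ is locally finite and infinite. Using this, refine the chain so that each $\Gamma_n$ contains a finite non-abelian subgroup $F_n\lhd\Gamma$ inside $\Gamma_f\cap\Gamma_n$: such an $F_n$ can be taken as the $\Gamma$-normal closure of a suitable commutator $h\in[\Gamma_f,\Gamma_f]\cap\Gamma_n$, with finiteness coming from local finiteness of $[\Gamma_f,\Gamma_f]$ and finiteness of the $\Gamma$-conjugacy class of $h$, and non-abelianness arranged by choosing $h$ so that its $\Gamma$-conjugates do not pairwise commute (possible because $\Gamma_f$ is non-virtually-abelian). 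Then $L(F_n)\subset M$ is a finite-dimensional non-abelian $*$-algebra globally preserved by every $\Ad u_\gamma$, and the induced $\Gamma$-action on $L(F_n)$ factors through the finite quotient $\Gamma/C_\Gamma(F_n)$. A standard cocycle-absorption argument inside $L(F_n)$ turns this outer action into inner automorphisms of $L(F_n)$, yielding matrix units in $L(F_n)$ that are fixed by every $\Ad u_\gamma$ up to inner perturbation within $L(F_n)$. Passing to the ultrapower $M^\omega$, these matrix units assemble into an embedding of $\bigotimes_n L(F_n)\supset R$ into $M'\cap M^\omega$, giving McDuffness.

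The main obstacle is the Part 2 construction. Naive $\Gamma$-invariant averages $|O|^{-1}\sum_{g\in O}u_g$ over finite $\Gamma$-conjugacy classes $O\subset\Gamma_f$ always land in the abelian algebra $\cZ(L(\Gamma))$, and therefore cannot by themselves produce non-commuting asymptotically central sequences. The resolution requires extracting the non-abelian structure from the non-central part of $L(F_n)$ and absorbing the $\Gamma$-action into inner automorphisms of the finite-dimensional algebra $L(F_n)$ itself. The group-theoretic construction of the $F_n$ (finite, non-abelian, normal in $\Gamma$, contained in $\Gamma_f\cap\Gamma_n$), together with the verification that the resulting matrix units give genuinely non-commuting asymptotically central sequences, is the technical heart of the argument and rests on Neumann's structure theorem for FC-groups combined with careful chain refinement using residual finiteness.
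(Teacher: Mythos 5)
Your Part 2 is where the real trouble lies, and the gap is fatal to the approach as proposed. The whole construction hinges on finding, inside $\Gamma_f\cap\Gamma_n$, a \emph{finite non-abelian} subgroup $F_n$ normal in $\Gamma$. But every finite subgroup of $\Gamma_f$ lies in the torsion subgroup of $\Gamma_f$, and that torsion subgroup can be abelian even when $\Gamma_f$ is residually finite, FC and not virtually abelian: consider $\bigoplus_{n}(\Z/3\Z\rtimes\Z)$ with the $n$-th copy of $\Z$ acting on the $n$-th copy of $\Z/3\Z$ by inversion. This group is residually finite, has finite conjugacy classes and centre of infinite index, yet its torsion subgroup is $\bigoplus_n\Z/3\Z$, so \emph{all} of its finite subgroups are abelian and no $F_n$ of the kind you need exists (the normal closure of a single commutator is finite but may perfectly well be abelian — here $[\Gamma_f,\Gamma_f]$ itself is abelian). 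Moreover, even granting the $F_n$, the $\Gamma$-fixed part of $\rL(F_n)$ is spanned by conjugacy-class sums and is therefore abelian, so no cocycle absorption carried out purely inside $\rL(F_n)$ can output non-commuting asymptotically central elements; one must use $\rL^\infty(X)$. The paper's proof does exactly that: after refining the chain so that for each $h\in\Gamma_f$ one has $\Gamma_n\subset\rC_\Gamma(h)$ for $n$ large (a refinement your chain lacks and which is needed for the formula to make sense), it sets $v_h:=\sum_{s\in\Gamma/\Gamma_n}\chi_{s\Gamma_n}u_{sh^{-1}s^{-1}}$. These unitaries lie \emph{exactly} in $M\cap\rL(\Gamma)'$, satisfy $\tau(v_h)=0$ for $h\neq e$, $h\mapsto v_h$ is a group morphism, and $v_h$ commutes with $\rL^\infty(\Gamma/\Gamma_k)$ as soon as $h\in\Gamma_k$. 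Choosing any non-commuting $h_n,h'_n\in\Gamma_f\cap\Gamma_n$ — which exist for every $n$ because an FC-group whose centre has infinite index has no abelian finite-index subgroup — gives central sequences $(v_{h_n})$, $(v_{h'_n})$ with $\tau(v_{h_n}v_{h'_n}v_{h_n}^*v_{h'_n}^*)=0$, hence McDuffness, with no torsion hypothesis on $h_n,h'_n$.

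Part 1 also has a gap at the step you call a ``spectral argument resolving a finite-dimensional ambiguity.'' The relevant relative commutant $\rL(\Gamma)'\cap M$ is in general neither finite dimensional nor contained in $\rL(\Gamma)$: it contains $\rL(\cZ(\Gamma))$, which is diffuse when $\cZ(\Gamma)$ is infinite, and it contains elements $a\,u_h$ with $h\in\Gamma_f\setminus\{e\}$ and $a$ a non-scalar $\rC_\Gamma(h)$-invariant function, which lie outside $\rL(\Gamma)$. So intertwining the two copies of $\rL(\Gamma)$ does not by itself control the compression. The paper's fix is to replace $\rL(\Gamma)$ by $B:=A^\Lambda\rtimes\Lambda_1$, where $\Lambda:=\rC_\Gamma(\Gamma_f)$ has finite index precisely because $[\Gamma_f:\cZ(\Gamma)]<\infty$ and $\Lambda_1:=\Lambda\cdot\Gamma_f$; then $B$ has property (T) (it contains $\rL(\Lambda_1)$ with finite index) and $M\cap B'\subset M\cap\rL(\Lambda)'\subset A^\Lambda\rtimes\Gamma_f\subset B$, so countability of $\cF(M)$ follows at once from Theorem A.1 of \cite{NPS}. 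If you insist on running the separability argument by hand, it is this $B$, not $\rL(\Gamma)$, that must be intertwined.
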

\begin{proof}
Whenever $H \subset \Gamma$, denote by $\rC_\Gamma(H)$ the centralizer of $H$ inside $\Gamma$.

Assume first that $\cZ(\Gamma)$ has finite index in $\Gamma_f$. Let $\Gamma \actson (X,\mu)$ be free ergodic p.m.p. Write $A := \rL^\infty(X)$ and $M := A \rtimes \Gamma$. Define $\Lambda := \rC_\Gamma(\Gamma_f)$. Since $\cZ(\Gamma)$ has finite index in $\Gamma_f$, it follows that $\Lambda$ has finite index in $\Gamma$. A fortiori, the subgroup $\Lambda_1 := \Lambda \cdot \Gamma_f$ has finite index in $\Gamma$. Also, the subalgebra $A^\Lambda$ of $\Lambda$-invariant functions in $A$, is finite dimensional and globally $\Lambda_1$-invariant. Consider the subalgebra $B := A^\Lambda \rtimes \Lambda_1$ of $M$. Since $L(\Lambda_1) \subset B$ has finite index, it follows that $B$ has property (T). On the other hand $M \cap B' \subset M \cap L(\Lambda)'$ and it is straightforward to check that $M \cap L(\Lambda)' \subset A^\Lambda \rtimes \Gamma_f$. So, we get $M \cap B' \subset B$. By \cite[Theorem A.1]{NPS}, it follows that $\cF(M)$ is countable.

Suppose from now on that $\Gamma$ is residually finite and $[\Gamma_f,\cZ(\Gamma_f)] = \infty$. Let $\Gamma_f = \{ h_1,h_2,\ldots \}$ be an enumeration. Let $H_n \lhd \Gamma$ be a decreasing sequence of normal, finite index subgroups with $\bigcap_n H_n = \{e\}$. Define
$$\Gamma_n := H_n \cap \bigcap_{s \in \Gamma / \rC_\Gamma(h_1,\ldots,h_n)} s \rC_\Gamma(h_1,\ldots,h_n) s^{-1} \; .$$
By construction, $\Gamma_n$ is a decreasing sequence of normal, finite index subgroups with $\bigcap_n \Gamma_n = \{e\}$ and such that for all $h \in \Gamma_f$, we have $\Gamma_n \subset \rC_\Gamma(h)$ for all $n$ large enough.

Denote $(X,\mu) = \invlimit (\Gamma/\Gamma_n , \text{counting probability measure})$. Consider the natural free, ergodic, profinite, p.m.p.\ action $\Gamma \actson (X,\mu)$. Put $A = \rL^\infty(X)$ and $M := A \rtimes \Gamma$. For every $s \in \Gamma$ and $n \in \N$, denote by $\chi_{s \Gamma_n}$ the function equal to $1$ on $s \Gamma_n$ and zero elsewhere and interpret $\chi_{s \Gamma_n}$ as a projection in $A$.

For every $h \in \Gamma_f$, define the unitary $v_h \in M \cap \rL(\Gamma)'$ by
$$v_h := \sum_{s \in \Gamma / \Gamma_n} \chi_{s \Gamma_n} u_{s h^{-1} s^{-1}} \quad\text{for $n$ large enough, meaning $\Gamma_n \subset \rC_\Gamma(h)$.}$$
It is straightforward to check that $\Gamma_f \recht \cU(M \cap \rL(\Gamma)') : h \mapsto v_h$ is a group morphism and that $\tau(v_h) = 0$ whenever $h \neq e$.

{\bf Claim.} If for all $n \in \N$, we have $h_n \in \Gamma_f \cap \Gamma_n$ with $h_n \neq e$, then $(v_{h_n})$ is a central sequence in $M$ with $\tau(v_{h_n}) = 0$ for all $n$. For all $n$, we have $v_{h_n} \in \rL(\Gamma)'$. So, to prove the claim, it suffices to take $k \in \N$, $g \in \Gamma$ and prove that
$$\lim_n \|[\chi_{g \Gamma_k} , v_{h_n}]\|_2 = 0 \; .$$
But, by construction, $\chi_{g \Gamma_k}$ and $v_{h_n}$ commute when $n \geq k$.

Since $\cZ(\Gamma_f) < \Gamma_f$ has infinite index and since $\Gamma_f$ has finite conjugacy classes, it follows that $\Gamma_f$ has no finite index abelian subgroups. So, for every $n$, the finite index subgroup $\Gamma_f \cap \Gamma_n$ of $\Gamma_f$ is non-abelian. Therefore, we can choose $h_n,h'_n \in \Gamma_f \cap \Gamma_n$ such that $h_n h'_n h_n^{-1} {h'_n}^{-1} \neq e$. By the claim above, $v_{h_n}$ and $v_{h'_n}$ are central sequences. By construction, $\tau(v_{h_n} v_{h'_n} v_{h_n}^* v_{h'_n}^*) = 0$ for all $n$. So, $M$ is McDuff.

\end{proof}


\begin{thebibliography}{99}

\bibitem{A} {\sc J. Aaronson}, The intrinsic normalising constants
of transformations preserving infinite measures.
{\it J. Analyse Math.} {\bf 49} (1987), 239-270.

\bibitem{BHV} {\sc B. Bekka, P. de la Harpe and A. Valette},
Kazhdan's property ($T$). {\it New Mathematical Monographs} {\bf 11}. Cambridge University Press, Cambridge, 2008.

\bibitem{CH} {\sc I. Chifan and C. Houdayer}, Bass-Serre rigidity results in von Neumann algebras.
{\it Duke Math. J.} {\bf 153} (2010), 23-54.

\bibitem{connes} {\sc A. Connes}, A factor of type II$_1$ with
 countable fundamental group. {\it J. Operator Theory} {\bf 4}
 (1980), 151-153.

\bibitem{connes1} {\sc A. Connes}, Classification of injective factors.
{\it Ann. Math.} {\bf 104} (1976), 73-115.

\bibitem{connes2} {\sc A. Connes}, Une classification des facteurs de
type III, {\it Ann. Ec. Norm. Sup.} {\bf 6} (1973), 133-252.

\bibitem{CFW} {\sc A. Connes, J. Feldman \& B. Weiss}, An amenable equivalence relation
is generated by a single transformation. {\it Ergodic Theory Dynam. Systems} {\bf 1}
(1981), 431-450.


\bibitem{CJ} {\sc A. Connes and V.F.R. Jones}, A II$_1$ factor
with two non-conjugate Cartan subalgebras, {\it Bull. Amer. Math. Soc.}
{\bf 6} (1982), 211-212.

\bibitem{CJ2} {\sc A. Connes and V.F.R. Jones}, Property (T) for von
Neumann algebras, {\it Bull. London Math. Soc.} {\bf 17} (1985),
57-62.

\bibitem{CT} {\sc A. Connes and M. Takesaki}, The flow of weights of a
factor of type III. {\it Tohoku Math. J.} {\bf 29} (1977), 473-575.

\bibitem{cornulier} {\sc Y. de Cornulier},
Finitely presentable, non-Hopfian groups with Kazhdan's property (T) and infinite outer automorphism group.
{\it Proc. Amer. Math. Soc.} {\bf 135} (2007), 951-959.

\bibitem{Er1} {\sc M. Ershov}, Golod-Shafarevich groups with property (T) and Kac-Moody groups. {\it Duke Math. J.} {\bf 145} (2008), 309-339.

\bibitem{Er2} {\sc M. Ershov}, Kazhdan groups whose FC-radical is not virtually abelian. {\it Preprint.} Available at {\tt http://people.virginia.edu/$\sim$mve2x}

\bibitem{Fu} {\sc A. Furman}, Gromov's measure equivalence
and rigidity of higher rank lattices. {\it Ann. of Math.} {\bf 150} (1999), 1059--1081.

\bibitem{G1} {\sc D. Gaboriau}, Invariants $l^2$ de relations
 d'\'{e}quivalence et de groupes. {\it Publ. Math. Inst. Hautes \'{E}tudes
 Sci.} {\bf 95} (2002), 93-150.

\bibitem{G2} {\sc D. Gaboriau}, Relative property (T) actions and trivial outer
automorphism groups. {\it Preprint.} {\tt arXiv:0804.0358}

\bibitem{gg} {\sc S.L. Gefter and V.Ya. Golodets}, Fundamental
groups for ergodic actions and actions with unit fundamental groups.
{\it Publ. Res. Inst. Math. Sci.} {\bf 24} (1988), 821-847.

\bibitem{ioana} {\sc A. Ioana}, Cocycle superrigidity for profinite actions
of property (T) groups. {\it Duke Math. J.}, to appear. {\tt arXiv:0805.2998}

\bibitem{IPP} {\sc  A. Ioana, J. Peterson and S. Popa}, Amalgamated
 free products of $w$-rigid factors and calculation of their symmetry
 groups. {\it Acta Math.} {\bf 200} (2008), 85-153.

\bibitem{MvN2} {\sc F. Murray, J. von Neumann}, Rings of operators
IV, {\it Ann. Math.} {\bf 44} (1943), 716-808.

\bibitem{NPS} {\sc R. Nicoara, S. Popa and R. Sasyk}, On II$_1$
factors arising from 2-cocycles of w-rigid groups. {\it J. Funct. Anal.} {\bf 242} (2007), 230-246.

\bibitem{OW} {\sc D.S. Ornstein and B. Weiss}, Ergodic theory of amenable group actions.
{\it Bull. Amer. Math. Soc. (N.S.)} {\bf 2} (1980), 161-164.

\bibitem{ozawa} {\sc N. Ozawa}, There is no separable universal
$\text{\rm II}_1$-factor, {\it Proc. Amer. Math. Soc.} {\bf 132}
(2004), 487-490.

\bibitem{ozpo} {\sc N. Ozawa and S. Popa},
On a class of $\mathrm{II}_1$ factors with at most one Cartan
subalgebra. {\it Ann. Math.} {\bf 172} (2010), 713-749.

\bibitem{Pcorr} {\sc S. Popa}, Correspondences,
INCREST preprint No 56, 1986 (unpublished).\\
http://www.math.ucla.edu/$\sim$popa/preprints.html

\bibitem{P1} {\sc S. Popa}, Strong rigidity of II$_1$ factors arising from malleable
actions of $w$-rigid groups, I. \emph{Invent. Math.} \textbf{165} (2006), 369-408.

\bibitem{P2} {\sc S. Popa}, Strong rigidity of II$_1$ factors
arising from malleable actions of $w$-rigid groups, II. {\it Invent.
Math.} {\bf 165} (2006), 409-452.

\bibitem{P3} {\sc S. Popa}, Cocycle and orbit equivalence superrigidity for
malleable actions of $w$-rigid groups.
{\it Invent. Math.} {\bf 170} (2007), 243-295.

\bibitem{P5} {\sc S. Popa}, On a class of
type II$_1$ factors with Betti numbers invariants. {\it Ann. of
 Math.} {\bf 163} (2006), 809-899.

\bibitem{Picm} {\sc S. Popa}, Deformation and rigidity for group
actions and von Neumann algebras, in ``Proceedings of the
International Congress of Mathematicians'' (Madrid 2006), Volume I,
EMS Publishing House, Zurich 2006/2007, pp. 445-479.

\bibitem{PV1} {\sc S. Popa and S. Vaes}, Actions of $\F_\infty$ whose II$_1$
factors and orbit equivalence
relations have prescribed fundamental group. {\it J. Amer. Math. Soc.} {\bf 23} (2010), 383-403.

\bibitem{PV2} {\sc S. Popa and S. Vaes}, Strong rigidity of
generalized Bernoulli actions and computations of their symmetry
groups. {\it Adv. Math.} {\bf 217} (2008), 833--872.

\bibitem{PV-proc-Zimmer} {\sc S. Popa and S. Vaes}, Cocycle and orbit superrigidity for lattices in $\SL(n,\R)$ acting on homogeneous spaces.
In {\it Geometry, rigidity, and group actions,} Eds. B. Farb and D. Fisher. The University of Chicago Press, to appear.
{\tt arXiv:0810.3630}

\bibitem{rad} {\sc F. Radulescu}, A one parameter group of automorphisms of
$L(\F_\infty)$ scaling the trace. {\it Comptes Rendus Acad. Sci.
Paris} {\bf 314} (1992), 1027-1032.

\bibitem{shalom} {\sc Y. Shalom}, Rigidity of commensurators and
irreducible lattices. {\it Invent Math.} {\bf 141} (2000), 1-54.

\bibitem{To} {\sc A. T\"{o}rnquist}, Orbit equivalence and actions of $\F_n$.
{\it J. Symbolic Logic} {\bf 71} (2006), 265-282.

\bibitem{V1} {\sc S. Vaes}, Rigidity results for Bernoulli actions and their
von Neumann algebras (after Sorin
Popa). S\'{e}minaire Bourbaki, exp. no. 961. \emph{Ast\'{e}risque} {\bf 311} (2007), 237-294.

\bibitem{Z2} {\sc R.J. Zimmer}, Ergodic theory and semisimple groups. Birkh\"{a}user,
Boston, 1984.

\end{thebibliography}
\end{document}